\documentclass[a4paper,12pt,reqno]{amsart}

\usepackage[mathscr]{euscript}
\usepackage{amssymb, amscd,enumerate,enumitem,color}
\usepackage[utf8]{inputenc}
\tolerance=180
\hyphenpenalty=50

\makeatletter
\def\subsection{\@startsection{subsection}{3}%
  \z@{.5\linespacing\@plus.7\linespacing}{.1\linespacing}%
  {\normalfont\bfseries}}
\makeatother

\topmargin= 0mm
\headsep = 20pt
\footskip = 28pt
\oddsidemargin=30pt
\evensidemargin=30pt
\addtolength{\textwidth}{36pt}   
\addtolength{\textheight}{33pt}   

\parskip=4pt

\font\smallsmc = cmcsc9
\font\smalltt = cmtt8
\font\smallit = cmti8

\font\smc = cmcsc10 at 12 pt


\newcommand{\bss}{\fontfamily{cmr}\fontseries{b}\fontshape{n}\selectfont}

\numberwithin{equation}{section}

\theoremstyle{plain}
\newtheorem{theo}{Theorem}[section]
\newtheorem{lem}[theo]{Lemma}
\newtheorem{prop}[theo]{Proposition}

\newtheorem{cor}[theo]{Corollary}

\newtheorem{lemma}[theo]{Lemma}

\theoremstyle{definition}
\newtheorem{rem}[theo]{Remark}

\newtheorem{definition}[theo]{Definition}

\theoremstyle{plain}
\newtheorem{Athm}{Theorem A\!\!}
\newtheorem{Aprop}[Athm]{Proposition A\!\!}
\newtheorem{Alem}[Athm]{Lemma A\!\!}

\theoremstyle{definition}
\newtheorem{Arem}[Athm]{Remark A\!\!}
\newtheorem{Adef}[Athm]{Definition A\!\!}

\newcommand{\Aref}{A\ref}

\newcommand{\rank}{\operatorname{rank}}
\newcommand{\beq}{\begin{equation}}
\newcommand{\eeq}{\end{equation}}
\renewcommand{\a}{\alpha}
\renewcommand{\b}{\beta}

\renewcommand{\d}{\delta}

\newcommand{\f}{\varphi}
\newcommand{\g}{\gamma}
\newcommand{\h}{\eta}

\renewcommand{\j}{\psi}
\renewcommand{\k}{\kappa}
\renewcommand{\l}{\lambda}
\renewcommand{\o}{\omega}
\newcommand{\q}{\vartheta}
\renewcommand{\r}{\rho}
\newcommand{\s}{\sigma}
\renewcommand{\t}{\tau}

\newcommand{\z}{\zeta}

\newcommand{\D}{\Delta}

\renewcommand{\L}{\Lambda}

\newcommand{\bC}{\mathbb{C}}
\newcommand{\bR}{\mathbb{R}}
\newcommand{\bZ}{\mathbb{Z}}

\newcommand{\bH}{\mathbb{H}}

\newcommand{\bN}{\mathbb{N}}

\newcommand{\bJ}{\mathbb{J}}


\renewcommand{\gg}{\mathfrak{g}}

\newcommand{\gp}{\mathfrak{p}}
\newcommand{\gq}{\mathfrak{q}}

\newcommand{\gF}{\mathfrak{F}}
\newcommand{\gJ}{\mathfrak{J}}
\newcommand{\gX}{\mathfrak{X}}

\newcommand{\gH}{\mathfrak{H}}

\newcommand{\gV}{\mathfrak{V}}

\newcommand{\gsp}{\mathfrak{sp}}

\newcommand{\ggl}{\mathfrak{gl}}

\newcommand\GL{\mathrm{GL}}
\newcommand\SL{\mathrm{SL}}

\newcommand\Spin{\mathrm{Spin}}
\newcommand\Sp{\mathrm{Sp}}


\newcommand{\cA}{\mathscr{A}}
\newcommand{\cB}{B}
\newcommand{\cC}{\mathcal{C}}
\newcommand{\cD}{\mathscr{D}}

\newcommand{\cH}{\mathscr{H}}

\newcommand{\cL}{\mathscr{L}}

\newcommand{\cN}{\mathscr{N}}

\newcommand{\cP}{\mathscr{P}}

\newcommand{\cS}{\mathscr{S}}
\newcommand{\cT}{\mathscr{T}}
\newcommand{\cU}{\mathscr{U}}
\newcommand{\cV}{\mathscr{V}}
\newcommand{\cW}{\mathscr{W}}
\newcommand{\cX}{\mathscr{X}}

\newcommand{\cZ}{\mathscr{Z}}

\newcommand\fr[2]{\tfrac{#1}{#2}}
\newcommand{\p}{\partial}
\newcommand{\ra}{\rightarrow}

\DeclareMathOperator\End{End\;}

\DeclareMathOperator\Ad{Ad}

\DeclareMathOperator\Id{Id}

\DeclareMathOperator{\Span}{span}
\DeclareMathOperator{\ad}{ad}
\DeclareMathOperator{\Diff}{Diff}
\DeclareMathOperator{\Lie}{Lie}

\newcommand\Hom{\operatorname{Hom}}
\renewcommand\Re{\operatorname{Re}}
\renewcommand\Im{\operatorname{Im}}

\newcommand{\wt}{\widetilde}
\newcommand{\wh}{\widehat}
\newcommand{\ol}{\overline}

\newcommand{\bt}{\begin{theo}\ \ }
\newcommand{\et}{\end{theo}}
\newcommand{\bp}{\begin{prop}\ \ }
\newcommand{\ep}{\end{prop}}
\newcommand{\bc}{\begin{cor}\ \ }
\newcommand{\ec}{\end{cor}}
\newcommand{\bl}{\begin{lem}\ \ }
\newcommand{\el}{\end{lem}}
\newcommand{\bd}{\begin{definition}}
\newcommand{\ed}{\end{definition}}
\newcommand{\n}{\nabla}
\newcommand{\op}{\oplus}

\newcommand{\be}{\begin{equation}}
\newcommand{\ee}{\end{equation}}
\newcommand\la[1]{\label{#1}}

\def\<#1,#2>{\langle\,#1,\,#2\,\rangle}
\newcommand{\arr}{\begin{array}{rlll}}
\newcommand{\ea}{\end{array}}
\newcommand{\bea}{\begin{eqnarray}}
\newcommand{\eea}{\end{eqnarray}}
\newcommand{\bean}{\begin{eqnarray*}}
\newcommand{\eean}{\end{eqnarray*}}

\newcommand{\ccU}{\CMcal{U}}

\hyphenation{par-am-etri-sa-tion}

\begin{document}
\rightline{J.\ Math.\ Phys.\ {\bf 57}, 102501 (2016)}
\rightline{AEI-2014-040}

\vskip 16truemm

\title[Pseudo-hyperk\"ahler prepotentials]
{On Pseudo-hyperk\"ahler Prepotentials}
\author[C. Devchand]{Chandrashekar Devchand}
\author[A.  Spiro]{Andrea Spiro}

\subjclass[2010]{53C26, 53C28, 53C10}
\keywords{Hyperk\"ahler metric, harmonic space,  hyperk\"ahler prepotential, $G$-structure}

\thanks{This research was partially supported by the 
{\it Ministero dell'Istruzione, Universit\`a e Ricerca} in the framework of the project 
``Real and Complex Manifolds: Geometry, Topology and  Harmonic Analysis''  
and by GNSAGA of INdAM}

\maketitle
\vskip 6truemm
{\small  
\begin{quotation}
\noindent
{\smc Abstract.} 
An explicit surjection from a set of  (locally defined)  unconstrained holomorphic
functions on  a certain submanifold of  
\linebreak
$\Sp_1(\bC) {\times} \bC^{4n}$ onto the 
set HK$_{p,q}$  of local isometry classes of real analytic pseudo-hyperk\"ahler 
metrics of signature $(4p,4q)$ in dimension $4n$ is constructed. The holomorphic 
functions, called prepotentials, are analogues of  K\"ahler potentials for K\"ahler 
metrics and provide a complete parameterisation of HK$_{p,q}$.  In particular, 
there exists  a bijection between HK$_{p,q}$  and the set of  equivalence classes 
of  prepotentials. This affords the explicit construction of  pseudo-hyperk\"ahler 
metrics from specified prepotentials.  The construction generalises one due to 
Galperin, Ivanov, Ogievetsky and Sokatchev. Their work is given a coordinate-free 
formulation and complete, self-contained proofs are provided.  An appendix provides 
a vital tool for this construction:  a reformulation of real analytic  
$G$-structures  in terms of  holomorphic frame  fields on  complex manifolds.
\end{quotation}
}
\bigskip

\section{Introduction}
\noindent 
This  paper is about a parametrisation of local isometry classes of real analytic 
pseudo-hyperk\"ahler metrics on $4n$-dimensional manifolds. This  parametrisation 
is surjective onto the space of local isometry classes and it allows the explicit construction 
of  metrics.  The parameter space consists of  unconstrained holomorphic
functions on  a certain submanifold of   $\Sp_1(\bC) {\times} \bC^{4n}$.

A pseudo-Riemannian manifold  $(M, g)$ is  determined by  the holonomy subbundle 
$P\subset O_g(M)$  of  its orthonormal frame bundle  $\pi\colon O_g(M)\rightarrow M$.  
In turn, $P$ is determined, up to local equivalence,  by its fundamental vector  fields 
$(E_A, e_a)$,  the infinitesimal transformations $E_A$  of its structure group
and the horizontal vector fields $e_a$  given  by the Levi-Civita connection form on $P$.  
Two pseudo-Riemannian manifolds $(M, g)$ and $(M', g')$ are  {\it locally isometric}  
if and only if  their respective  vector fields $(E_A, e_a)$ and $ (E'_A, e'_a)$  are related  by 
a local diffeomorphism.

In the case of a  pseudo-hyperk\"ahler manifold $(M,g)$,  
the associated holonomy bundle  $P\subset O_g(M)$ is locally identifiable  with 
the trivial bundle $\,\pi\colon P|_{\cV} \simeq  \Sp_{p,q} {\times} \cV \rightarrow \cV$, 
for  some open subset $\cV  \subset \bR^{4n}$. 
We shall regard the holonomy bundle as a  subbundle of a larger  
bundle of orthonormal frames  with  structure group  
$\Sp_1 {\cdot} \Sp_{p,q}\,$. This larger bundle has a double covering identifiable with
$\Sp_1 {\times} \Sp_{p,q} {\times} \cV $, a real submanifold of  the complex
Lie group  $\cP =  (\Sp_1(\bC) {\times} \Sp_n(\bC)) \ltimes \bC^{4n}$. 

Using the above local identifications, 
the  vector fields  $(E_A, e_a)$ associated with $(M,g)$ can be identified
with  corresponding vector  fields on the larger space
$\Sp_1 {\times} \Sp_{p,q} {\times} \cV  \subset \cP$.
If $g$  is  real analytic,  these vector fields admit 
holomorphic extension to an open subset  $\cU \subset \cP$.  
Including  the basis vector fields   $(H_0, H_{++}, H_{--})$ of   
$\gsp_1(\bC) \subset \Lie(\cP) = \gsp_1(\bC) + \gsp_n(\bC) +  \bC^{4n}$,  we obtain a set  
$\cA = (H_0, H_{\pm\pm}, E_A, e_a)$ of holomorphic vector fields on  $\cU$, 
which is naturally associated with the  pseudo-hyperk\"ahler metric $g|_{\cV}$. 
This mapping  from  real analytic pseudo-hyperk\"ahler metrics to sets of 
holomorphic vector fields admits an explicit inversion. Introducing the notion  
of  an {\it hk-pair} $(\cA, M)$,  consisting of a set  $\cA
= (H_0, H_{\pm\pm}, E_A, e_a)$ of holomorphic vector fields on an open subset $\cU \subset \cP$, 
satisfying certain Lie bracket relations, and 
a real submanifold $M \subset \cP$, satisfying appropriate transversality conditions  
with respect to  the  fields $H_0, H_{\pm\pm}$ and  $E_A$,
we shall show that  every hk-pair $(\cA, M)$ determines  a  pseudo-hyperk\"ahler 
metric $g$ on the  manifold $M$. Further, the real submanifold $M \cdot \Sp_{p,q} \subset \cU$  is identifiable with 
the (trivial) holonomy bundle $\pi\colon P = M \times \Sp_{p,q} \to M$ of $(M, g)$.

The correspondence between hk-pairs and pseudo-hyperkähler metrics is 
crucial in order to obtain  a complete parametrisation  of the  local isometry 
classes of  real analytic  pseudo-hyperk\"ahler metrics.  We shall prove that: 

\medskip
\begin{itemize} [itemsep=8pt plus 5pt minus 2pt, leftmargin=18pt]
\item[A)]  
There exists a bijection between the  local isometry classes of  
real analytic pseudo-hyperk\"ahler metrics and local equivalence classes of hk-pairs.

\item[B)]  Every local equivalence class of hk-pairs contains a distinguished 
subclass of {\it  canonical hk-pairs}, each  completely determined by a  
single unconstrained  holomorphic function $\cL$,  the {\it prepotential},
defined   on a  complex submanifold   
of $\cP =  (\Sp_1(\bC) {\times} \Sp_n(\bC)) \ltimes  \bC^{4n}$. 
\end{itemize}
 
\newpage
\noindent
Combining A and B:  

\noindent
{\it For every real analytic pseudo-hyperk\"ahler manifold $(M, g)$,   
the restriction  of the metric $g|_\cV$  to a sufficiently small open subset 
$\cV \subset M$ can be associated with  an unconstrained  prepotential $\cL$. 
Conversely, an arbitrary  holomorphic  function $\cL$ on a certain 
complex submanifold of   $\cP$ 
 determines  a  pseudo-hyperk\"ahler metric $g$, unique up to  isometry. 
We call $\cL$ a  prepotential of the  pseudo-hyperk\"ahler metric $g$.}

One of the striking developments in theoretical physics, which animated 
much  mathematical interest in supersymmetry, was the appearance
of special geometries in various surprising contexts. For instance, requiring 
the harmonic map equations on a four-dimensional Lorentzian manifold 
to be supersymmetric  automatically provided the target manifold with a
K\"ahler structure \cite{z}. Soon, it was found \cite{agf} that extended 
versions of supersymmetry yielded hyperk\"ahler targets. The search
for a supersymmetric action functional gave rise to the harmonic space method
\cite{gikos}, which yielded a construction of a supersymmetric
Lagrangian $\cL$ in an extended space  called {\it harmonic superspace}.
The construction established a correspondence between the functions $\cL$ and  
hyperk\"ahler metrics. In an interesting collateral development \cite{gios1, gios2,gios_book}, 
these authors extracted the latter correspondence from the original context of
supersymmetric field theories, presenting a construction of   hyperk\"ahler
metrics, parametrised by a prepotential  $\cL$, much as the K\"ahler potential
parametrises K\"ahler metrics.  
A streamlined presentation of the correspondence
was given in \cite{do1}. This was amenable to a generalisation to supersymmetric 
hyperkähler spaces \cite{do2,do3}.
Further,  a discussion of the prepotential in the framework of
quaternionic Kähler metrics has been given in \cite{gio}.

The correspondence  between (pseudo-)hyperk\"ahler metrics and free prepotentials  
provides an efficient  parameterisation of  {\it all} local isometry classes of 
real analytic pseudo-hyperk\"ahler metrics and is important from both 
field theoretical  and differential geometric  points of view. 
The  purpose of  this paper is to give an appropriate mathematical presentation,  
in coordinate-free language, with complete and  self-contained proofs;
thus opening the way to further developments and applications. 
An  analogous correspondence between Yang-Mills connections on (generalised) 
hyperk\"ahler manifolds and free prepotentials has been discussed in \cite{acd}.  

 \vskip 2pt plus 10pt minus 6pt
{\it Structure of the paper.} 
Section \ref{basic} contains  our notation  and certain basic facts.  
In Sect.\  \ref{hk_frames} and \ref{classify} we discuss hk-pairs, show how 
the associated  pseudo-hyperk\"ahler metrics may be determined and
we state the two  main  theorems, which establish the  surjective 
correspondence between prepotentials  and   equivalence classes of hk-pairs. 
In Sect.\ \ref{harmonics} we obtain some technical results on differential 
equations  on harmonic spaces required  for the proofs 
of  our main theorems   in  Sect.\  \ref{pf_vpot_thm} and \ref{pf_prepot_thm}.
A summary of our construction, a five-step recipe to obtain  a pseudo-hyperk\"ahler 
metric from a  prepotential, is given in  Sect.\  \ref{reconstruct}. 
The appendix discusses  real and complex $G$-structures, reformulating them
in terms of  holomorphic frame  fields on  complex manifolds.
The latter provide a useful tool for the investigation of local properties of 
manifolds with  real analytic $G$-structures.
In  Sect.\ \ref{appendixHK} we discuss the particular case of   $G$-structures 
corresponding to real analytic pseudo-hyperk\"ahler manifolds.  
Finally, in Sect.\ \ref{bijection}, we prove the bijection  between 
local isometry classes  of real analytic pseudo-hyperk\"ahler metrics and 
local equivalence classes of  hk-pairs.

The discussion in the appendix has been kept general enough with a view to
being directly applicable to other geometries. In particular,  we intend to use  
it  to obtain a new  parametrisation of the local isometry classes of 
quaternionic Kähler metrics.

\noindent
{\bf Acknowledgments.} 
We are grateful to Dmitri Alekseevsky for  very useful discussions on many aspects 
of this paper.  One of us (CD) thanks Hermann Nicolai and the Albert Einstein 
Institute for providing an excellent research environment. 
We thank  the Max-Planck-Instit\"ut f\"ur Mathematik,  Universit\`a degli Studi di Firenze, 
Universit\"at Potsdam and  Universit\`a degli Studi di Camerino 
for supporting visits for the purposes of this collaboration. 
We are happy to thank the anonymous referee for a careful reading of the manuscript.

\section{Basic notions}
\setcounter{equation}{0}
\label{basic}

\subsection{A basis for  $\  \gp =(\gsp_1(\bC) + \gsp_n(\bC)) + \bC^{4n}$}
\noindent 
Consider the Lie algebra $\ \gg= \gsp_1(\bC) {+} \gsp_n(\bC)$.  
Since the  vector space  $V = \bC^2 \otimes \bC^{2n} \simeq \bC^{4n}$
is a  $\gg$-module, we may extend  $\gg$ to the Lie algebra
$\gp = \gg + V$  with  additional  brackets, 
$$
[v, v'] = 0\,,\quad [A, v] = A \cdot v\,, \qquad v, v' \in V,\quad A \in \gg\,,
$$
where $A\cdot \colon V \rightarrow V ,\  v\mapsto A {\cdot} v$,  denotes  
the standard action  of $A$ on  $V$. 
As our standard  basis  for $ \gsp_1(\bC)$, we use the triple 
$$
H^o_0  = \begin{pmatrix} 1 & 0 \\ 0 & -1 \end{pmatrix} ,\qquad 
H^o_{++}  = \begin{pmatrix} 0 & 1 \\ 0 & 0 \end{pmatrix} ,\qquad   
H^o_{--} = \begin{pmatrix} 0 & 0 \\ 1 & 0 \end{pmatrix}\ .
$$
 The  standard bases of  $\bC^2$ and $\bC^{2n}$ are  denoted respectively by 
 $(h^o_+, h^o_-)$ and $(e^o_a), a=1,\dots,2n$. 
In terms of these, the basis elements of $V = \bC^{4n}$ are given by  
$e^o_{\pm a} := h^o_{\pm} \otimes e^o_a$.  In this basis 
$V$ manifestly decomposes as $V = V_{+} + V_{-}\,$, where
$V_{\pm} :=   \Span_\bC \{e^o_{\pm 1}, \dots, e^o_{\pm 2n}\}$ are eigenspaces 
of $H^o_0$, since  $h^o_\pm$ are its eigenvectors with eigenvalues  $\pm 1$.
As a basis of  $ \gsp_n(\bC)$, we consider  the $N$-tuple  of  $2n{\times} 2n$ matrices,
$(E^o_1, \ldots, E^o_{N})$, $N = (2 n+1) n$,  corresponding, by  the classical 
identification  $\gsp_n(\bC)\simeq \vee^2 \bC^{2n}$,   
to   the    tensors  $E^o_{ab} := e^o_a \vee e^o_b$. 
The nonzero Lie brackets of the basis elements of $\gp$ are given by
\begin{align} 
  [H^o_{++}, H^o_{--} ] &= H^o_0\,, 
&  [H^o_0, H^o_{\pm\pm} ] &= \pm 2 H^o_{\pm \pm}\,, 
 & [E^o_A, E^o_B] &=  c_{AB}^C E^o_C\,, 
\la{flat_g}
\\
 [H^o_{\pm\pm}, e^o_{\mp a}]  &= e^o_{\pm a}\,, 
 &  [E^o_A, e^o_{\pm a}] &= (E^o_A)_a^b e^o_{\pm b}\,,   
 &  [H^o_0, e^o_{\pm a}] &= \pm e^o_{\pm a}\,, 
\nonumber
\end{align}
where  $(E^o_A)_a^b$ denote  the entries of the  matrix $E^o_A$ and  $c^C_{AB}$ 
the structure constants of $\gsp_n(\bC)$ with respect to the  basis $(E^o_A)$.
  
The above  basis manifestly displays $\gp=\gg + V$ as a  Lie algebra with a five-fold
gradation, 
$$
\gp = \gp_{-2} \op \gp_{-1} \op \gp_{0} \op  \gp_{+1} \op  \gp_{+2}\ ,
$$
with $[ \gp_i , \gp_j ] \subset \gp_{i+j}$ and $\gp_{i+j} = 0$ if $ |i+j| > 2$.
Here the one-dimensional submodules $\gp_{\pm2}$ are generated by $H^o_{\pm\pm}$, 
and $ \gp_{\pm 1} = V_{\pm}$.  The element $H^o_0\in \gp_0$ is the grading element, 
which defines the gradation of $\gp$ by virtue of the space $\gp_j$ being the eigenspace
of $\ad_{H^o_0}$  with eigenvalue $j$. We say that an element  $x\in \gp_j$ has
{\it charge} $j$ and we write $x$, as in the basis above, with an appropriate
number of $+$ or $-$ signs in the subscript.

\subsection{Coordinate systems and left-invariant vector fields
}
\label{coords}
\noindent 
The  connected subgroups of  $\GL_{4n}(\bC) \ltimes \bC^{4n}$  with Lie algebras  
$\gg$ and $\gp$,   respectively, are  denoted by 
$$G = \Sp_1(\bC) \times \Sp_n(\bC)\quad\mbox{and}\quad 
\cP =  G \ltimes \bC^{4n}.
$$
We parametrise $\GL_{2n}(\bC)$ and $ \GL_2(\bC)$  using the entries 
of their respective elements, $B = (B^a_b) \in \GL_{2n}(\bC)$  and
$$ 
U  = \begin{pmatrix} u^1_+ & u^1_-\\ u^2_+ & u^2_- \end{pmatrix} \in \GL_2(\bC)\,.
$$ 
The elements of the subgroup $\Sp_n(\bC) \subset \GL_{2n}(\bC)$ 
are characterised  by the constraints  $\,B^c_a \o_{cd} B^d_b =\o_{ab}\,$,
where $\o$ is the $2n{\times} 2n$ matrix of the symplectic form.
Similarly, the elements of  $\Sp_1(\bC) =\SL_2(\bC) \subset \GL_2(\bC)$   satisfy
\begin{equation}
 \det(u^i_\pm) = u_+^1 u_- ^2 - u_+^2 u_- ^1 =  1\,,
\end{equation}
which  is tantamount to  
$\varepsilon^{AB} \varepsilon_{ij} u^i_B u^j_C = \d^A_C\,$, where
the $2 {\times} 2$  skewsymmetric matrices 
$(\varepsilon_{ij})_{i,j \in \{1,2 \} }\,,\, (\varepsilon_{AB})_{A,B \in \{+,- \} }$
and their respective inverses 
$(\varepsilon^{ij}) \,{=}\, (\varepsilon_{\ell m})^{-1}$, 
$(\varepsilon^{AB}) \,{=}\, (\varepsilon_{CD})^{-1} $,
have nonzero elements 
$\varepsilon_{12} \,{=}\, - \varepsilon^{12} \,{=}\, 1$ and 
$\varepsilon_{+-} \,{=}\, - \varepsilon^{+-}  \,{=}\,  1 $. 
Then,   the inverse matrix $U^{-1} \in \Sp_1(\bC) $ takes the form
\begin{equation}
\label{inverseU}
U^{-1}  =  \left(- u^\pm_i  \right) = -( \varepsilon^{AB} \varepsilon_{j\ell}u^\ell_B  )
= \begin{pmatrix} u^2_- & -u^1_-\\ -u^2_+ & u^1_+ \end{pmatrix}.
\end{equation}
Here, we adopt the convention
$\  \a_{i}  := \varepsilon_{i \ell} \a^{\ell}\,$ and 
$\,\b^{j} := \varepsilon^{j\ell} \b_{\ell}\,$
for  raising and lowering $\Sp_1(\bC)$-indices.

We  denote by $(z^{1 a}, z^{2b})$,  $a,b =1,\dots 2n$,  the elements of 
$V = \bC^{4n} = \bC^2 \otimes \bC^{2n}$ and 
call  the standard system of coordinates on 
$ \left( \GL_2(\bC) {\times} \GL_{2n}(\bC)\right)\ltimes \bC^{4n}$
the {\it central coordinate system},
$$
((u^i_{\pm}), (B^a_b),  (z^{i a}))\colon 
\left(\GL_2(\bC) {\times} \GL_{2n}(\bC)\right)\ltimes \bC^{4n} 
\longrightarrow \bC^4 {\times} \bC^{4n^2} {\times} \bC^{4n}.
$$
The basis elements of $\gp$ in \eqref{flat_g} 
are  restrictions  to $\cP$  of left-invariant  vector fields of  the Lie group
$ \left( \GL_2(\bC) {\times} \GL_{2n}(\bC)\right)\ltimes \bC^{4n}$, 
which contains $\cP$  as a proper Lie subgroup.  
 In  central coordinates we have
\begin{align}  
H_{0}^o &= u^i_+ \frac{\partial}{\partial u^i_+}  
                                   - u^i_- \frac{\partial}{\partial u^i_-} \ , &
H_{++}^o &=  u^i_+ \frac{\partial}{\partial u^i_-} \ , &
H_{--}^o  =   u^i_- \frac{\partial}{\partial u^i_+}\ ,
\nonumber \\ \label{coord_vfs} 
 E^o_A &=   B^a_c(E^o_A)^c_b \frac{\partial}{\partial B^a_b}\ , &
e^o_{\pm a}  
 &= B^b_a u^j_{\pm} \frac{\partial}{\partial z^{j b}}\ . &
\end{align}
A  useful alternative coordinate system is the  {\it analytic coordinate system}, 
$$
((u^i_{\pm}), (B^a_b), (z^{\pm a}))\colon 
 \left(\GL_2(\bC) \times \GL_{2n}(\bC)\right)\ltimes \bC^{4n} 
 \longrightarrow \bC^4 \times \bC^{4n^2} \times \bC^{4n}\, ,
$$
determined by the coordinate transformation
$$
(u^i_{\pm}) \mapsto (u^i_{\pm}) \,,\quad 
(B^a_b)  \mapsto  (B^a_b) \,,\quad 
(z^{ia}) \mapsto  \left.(z^{\pm a})\right|_{(U , B, z)} 
:= U^{-1} \cdot \left.(z^{ia})\right|_{(U , B, z)}\,,
$$
where the last  mapping is equivalent to
$$
 \left.  \begin{pmatrix} z^{1a} \\ z^{2a}
                  \end{pmatrix}\right|_{(U = (u^i_{\pm}), B, z = z^{ia})} 
 = \begin{pmatrix} u^1_+ & u^1_-\\ u^2_+ & u^2_{-}\end{pmatrix}
\left. \begin{pmatrix} z^{+a} \\ z^{-a}
                  \end{pmatrix}\right|_{(U = (u^i_{\pm}), B, z = z^{ia})}.
$$
In these coordinates, the vector fields  $e^o_{\pm a}$ have the simple expression 
$e^o_{\pm a} = B_a^b \frac{\partial}{\partial z^{\pm b}}$. 
In terms of the elements of $U^{-1}$ given in \eqref{inverseU}, we may also write
$z^{\pm a} = - u^\pm_i z^{ia}$.

\subsection{U(1)-charge}
\label{charge}
\noindent 
Let  $F$  be a   $\bC^N$-valued    holomorphic function, defined  
on (an open subset of) $\Sp_1(\bC)$ or    $\Sp_1(\bC) \ltimes \bC^{4n}$. 
Our main results depend crucially on  certain properties of such functions and 
we discuss these in Section \ref{harmonics}. 
In analogy with the terminology for the elements of $\gp$, we  say that   
 $F$ has  {\bss charge}   $k$  
if it  is a solution of    the differential equation\footnote{We use the notation  
$X {\cdot} f := X(f)$ to denote the directional  derivative of a function $f$ 
on a manifold $M$ along a  vector field $X$.}
\beq 
H^o_0 {\cdot}F = k F\,, \quad  k \in \bZ\, .
\eeq
We shall write such functions with $|k|$ plus or minus signs in the subscript 
or,  when it is less cumbersome, as $F_{(\pm |k|)}$.  We also adopt the sign 
convention that a plus or minus sign in the {\it superscript}  denotes, 
respectively, a negative or positive  charge,  i.e. the opposite charge to that 
denoted by the same sign in the subscript. So,  for instance, 
the coordinates $z^{\pm a}$ defined above satisfy the condition
 $\ H^o_0 {\cdot}z^{\pm a} = \mp z^{\pm a}$.

\subsection{Real structures on $\cP=(\Sp_1(\bC)\times\Sp_n(\bC))\ltimes\bC^{4n} $}
\label{real_structures}
\noindent 
Consider  non-negative integers $p,q$   with  $p + q = n$, and organise
the matrix $I_{2p, 2q}$ of the flat metric of signature $(2p,2q)$ and the complex structures $\bJ$ and $J$
of $\bC^{2n}$ and $\bC^2$, respectively,   as follows: 
\begin{equation*}
I_{2p,2q} =  \begin{pmatrix}  \eta & 0  \\    0 & \eta         \end{pmatrix}  ,\quad 
(\bJ_a^b) = \begin{pmatrix}  0&- \eta  \\   \eta &0     \end{pmatrix},\quad 
(J_i^j)   =           \begin{pmatrix}  0& -1  \\    1 & 0   \end{pmatrix},
\end{equation*}
where $\eta :=     \begin{pmatrix}  I_p & 0\\ 0 & - I_q  \end{pmatrix}$.
Using these matrices we may define a   holomorphic map
\begin{equation*} 
\begin{split}
\psi \colon \left(\GL_2(\bC) {\times} \GL_{2n}(\bC)\right)\ltimes \bC^{4n}   
 &\rightarrow 
     \left( \GL_2(\bC) {\times} \GL_{2n}(\bC)\right)\ltimes \bC^{4n}\\
(U,  B,  (z^{ia}) ) &\mapsto (\psi(U), \psi(B) , (\j^{jb}(z)) )\,,
\end{split}
\end{equation*}
where 
\begin{equation*}
\psi(U) = (U^T)^{-1},\quad  \psi(B) = (I_{2p,2q} {B^T} I_{2p,2q})^{-1},\quad
\psi^{jb}(z) =  - J^{j}_i\  \bJ^{b}_a\ z^{ia}. 
\end{equation*}
 Under  $\psi$   the left-invariant vector fields of $\cP$ transform as:
 \beq \label{bistecca} 
 {\j}_*(H^o_0) = - H^o_0\ ,\quad   
 {\j}_*(H^o_{\pm\pm}) = - H^o_{\mp\mp}\ , \quad  
 {\j}_*(e^o_{\pm a}) := \pm  \wh \bJ_a^b  e^o_{\mp b}
 \ ,\eeq
 where   
 the $\GL_{2n}(\bC)$-valued function 
$\wh \bJ\colon \cP \rightarrow \GL_{2n}(\bC)$  is defined by 
\beq 
\wh  \bJ|_{(U, B, z)} :=   
- I_{2p, 2q} {\cdot} \left(B\, {\cdot}
                 \begin{pmatrix}  0&- I_n \\ I_n &0 \end{pmatrix} 
             {\cdot}\, B^T \right)^{-1} .
\eeq
The map $\psi$ determines, by conjugation, an anti-holomorphic map,
\beq \label{re_str_1}
\tau(U,  B,  z)  := \overline{\psi(U,  B,  z)}
= \left((\overline{U^T})^{-1}\,,\, (I_{2p,2q} \overline{B^T} I_{2p,2q})^{-1} \,,\, \overline{\j(z)}\right).
\eeq
The map $\psi$ and complex conjugation clearly commute.
The  push-forwards of the complex vector fields  $H^o_0$, $H^o_{\pm\pm}$,  
$e^o_{\pm a}$ under the anti-involution  $\t$ are  
\beq\label{pushforwards}  
\t_*(H_0^o) =   - H^o_{0}\ ,\quad 
\t_*(H_{\pm\pm} ^o) =  - H^o_{\mp \mp}\ ,\quad 
\t_*(e^o_{\pm a}) =  \pm  \wh \bJ_a^b e^o_{\mp b}\  .
\eeq
Now,  $\t(\cP) \subset \cP$ and the  $\t$-fixed point set is
$
\cP^\t = (\Sp_1 \times \Sp_{p,q})\ltimes (\bC^{4n})^\t \,, 
$
where
$$
(\bC^{4n})^\t  := \{\, (z^{1a}, \overline{z^{2a}},  \h^a_b z^{2b},  
                                      - \h^a_b \overline{z^{1 b}}),  \ a = 1,\ldots, n\,\}.
$$
We call   $\t|_{\cP}$ the  {\bss real structure  of  signature $\boldsymbol{(4p, 4q)}$ on $\boldsymbol{\cP}$ }. 
For simplicity,   we shall use $\t$ instead of  $\t|_\cP$ and we similarly 
denote  each  of the three  component parts, the anti-involutions  
on $\Sp_1(\bC)$, $\Sp_n(\bC)$ and $\bC^{4n}$  given by \eqref{re_str_1}. 
Which anti-involution is meant will  always  be clear from the context.

The space  $(\bC^{4n})^\t$ is   endowed with an  $(\Sp_1 {\times} \Sp_{p,q})$-invariant 
quaternionic structure  $\gJ\colon\bH^n \to \bH^n$, $\gJ^2 = -1$, 
$$
\gJ(z^{1a}, \overline{z^{2b}},  \h^c_a z^{2a}, - \h^d_a \overline{z^{1 a}})  
           := (  z^{2b}, - \overline{z^{1a}}, \h^c_a  z^{1a},  \h^d_a \overline{z^{2 a}})\ ,
$$
and is naturally identifiable with   
$\bH^n =   \{(z^{1a}, \overline{z^{2b}}) , \ z^{ja} \in \bC\} $, 
$n$-dimensional  quaternion  space.

\subsection{$\gsp_n(\bC)$-equivariance}
\noindent 
Let  $\rho\colon \gsp_n(\bC) \rightarrow \ggl(S)$ be  a linear representation 
of $\gsp_n(\bC)$ on  a complex vector space  $S$ and 
$f\colon \cU \subset \cP \rightarrow S$ a holomorphic map.
We   say that    $f$ is {\bss $\gsp_n(\bC)$-equivariant} if it 
satisfies  the differential equation
\beq 
E_A^o {\cdot}f = \rho(E_A^o)(f)\ .
\eeq   
For instance,  functions $(f^a)\,,\, (h_b)\,,\, (g^a_b)$ and $(\ell^a_{bc})$, 
taking values in  the spaces 
$\wt V:=\bC^{2n}\,,\, \wt V^*\,,\, \wt V \otimes \wt V^*$  and 
$\wt V \otimes \wt V^* \otimes \wt V^*$,  respectively, are   
$\gsp_n(\bC)$-equivariant  if they satisfy the differential equations
\begin{align*}
E_A^o {\cdot} f^a &=  - (E^o_A)^a_b f^b\ ,\quad 
E_A^o {\cdot} h_a  =   (E^o_A)^b_a h_b\ ,  \\
E_A^o {\cdot} g^a_b &=  - (E^o_A)^a_c g^c_b + (E^o_A)^c_b g^a_c\  ,\\
E_A^o {\cdot} \ell^a_{bc} 
  &=  - (E^o_A)^a_d \ell^d_{bc} + (E^o_A)^d_b \ell^a_{dc} + (E^o_A)^d_c \ell^a_{bd}\ .  
\end{align*}

\section{hk-frames}
\label{hk_frames}
\noindent 
Let $p, q$ be   non-negative integers   with  $p {+} q = n$  and $\t$  the  real 
structure \eqref{re_str_1}  of signature $(4p,4q)$ on  
$\cP =  G {\ltimes} V ,\,  V = \bC^{4n},\,  G = \Sp_1(\bC) {\times} \Sp_n(\bC)$.
Following \cite{gios2,gios_book} we introduce:

\begin{definition}  \label{appropriate} 
Let $\cV \subset V =  \bC^{4n}$ be a connected, simply connected neighbourhood of $0$, 
invariant under  the involution $\t$.  The {\bss harmonic space} of  $\cV$ is
the   set  $\cH|_{\cV} :=  \Sp_1(\bC) {\times}\{I_{2n}\} {\times} \cV$.  
When $\cV = \bC^{4n}$,  we  write simply $\cH$.
Further, an open subset $\cU \subset \cP$  is called {\bss appropriate} if it is
a $\t$-invariant simply connected neighbourhood of $e = (I_2, I_{2n}, 0)$,  
such that     $\cU \cap \cH= \cH|_\cV $ for some open subset  $\cV \subset  \bC^{4n}$.
\end{definition}

\noindent
Let  $\gX^{\text{hol}}(\cU)$ be the space of holomorphic vector fields on  
 an appropriate open subset $\cU \subset \cP$. 

\begin{definition}  \label{hkframe} 
A   collection $\cA = (H_0, H_{\pm\pm}, E_A, e_{\pm a})\ $ 
of  holomorphic vector fields  in $ \gX^{\text{hol}}(\cU)$,
$\bC$-linearly independent at all points, 
is an {\bss hk-frame\/} if:  

\begin{itemize}[itemsep=4pt, leftmargin=18pt]
\item[a)] 
$\cU$  carries a holomorphic right action 
$ \rho\colon \cU {\times} G{\ \rightarrow\ } \cU$  
of $G = \Sp_1(\bC){\times}\Sp_n(\bC)$ such that the associated  
group homomorphism $\wh \rho\colon G \rightarrow \Diff(\cU)$  satisfies
$$
\wh \rho_*(H^o_0) = H_0\,,\quad 
\wh \rho_*(H^o_{\pm\pm}) = H_{\pm\pm}\,, \quad 
\wh \rho_*(E^o_A) = E_A\,.
$$ 
Since the map $\ \wh \rho_*\colon \gg \simeq T_e G \rightarrow \gX^{\text{hol}}(\cU)$
is an injective Lie algebra homomorphism, the vector fields 
$(H_0, H_{\pm\pm}, E_A)$ satisfy the Lie bracket relations  (cf.\ \eqref{flat_g})
\beq
[H_0, H_{\pm\pm} ] \ =\  \pm 2 H_{\pm \pm}\ , \quad
[H_{++}, H_{--} ] =\,  H_0\ ,\quad 
[E_A, E_B] =  c_{AB}^C E_C\ .
\label{algebra}	
\eeq

\item[b)] 
The  Lie  brackets of the other fields of $\cA$ are given by
\begin{align} 
[H_0, e_{\pm a}]   &=    \pm e_{\pm a}\ ,& 
[H_{\pm\pm}, e_{\pm a}] &=   0\ ,&
[H_{\pm\pm}, e_{\mp a}]  &= e_{\pm a}\ , &
\nonumber  \\
[E_A, e_{\pm a}]  &=  (E^o_A)_a^b e_{\pm b} \ , &
[e_{\pm a} , e_{\pm b}]  &= 0\ ,&
  [e_{+a}, e_{-b}] &= R_{ab}^A E_A\ ,  
\label{curv_constr} 
\end{align}
where  $\ R_{ab}^A\colon \cU \rightarrow \bC$ are   holomorphic functions.

\item[c)]  
The orbit space $M = \cU/G$ is a  manifold and   
$\pi \colon \cU \rightarrow M = \cU/G$   is a principal $G$-bundle  over $M$.
\end{itemize}
\noindent
\noindent
A pair of hk-frames   $\cA\,, \cA'$ defined on appropriate open subsets 
$\cU\,, \cU' \subset \cP$,   respectively, are  
 {\bss locally equivalent\/} if there exists a  $G$-equivariant 
biholomorphism  $\varphi\colon \cU \rightarrow \cU'$ which maps  the fields of $\cA$ 
into the corresponding fields of  $\cA'$. We write:   $\cA' = \varphi_*(\cA)$.
\end{definition}

\noindent
A particularly important class of hk-frames is given by:

\begin{definition}  \label{canon_frame}
A {\bss canonical hk-frame} is an hk-frame  
$\cA \,{=}\, (H_0, H_{\pm\pm}, E_A, e_{\pm a})$  on an appropriate open subset   
$\cU \subset \cP$,  in which the vector fields take the form\footnote{
Here, the components $v^{\pm b}_{\pm \pm}$ and  $v_{-a}^{+b}$ 
 are complex functions on $\cU$ and
have charges in accordance 
with  the notation of Sect.\ \ref{charge}:  a plus (minus) sign in the superscript  denotes 
a negative (positive) charge and vice versa for subscripts. So, for instance, the components $v^{- b}_{++}$, 
which satisfy  $\ H^o_0 {\cdot} v^{- b}_{++} = 3 v^{- b}_{++}\,$ in virtue of 
eq.\ \eqref{algebra}, have  charge $+3$.} 
\begin{displaymath}
 \begin{split}
H_0 &=  H_0^o \\
H_{\pm\pm} &=  (H_{\pm\pm}^o + \d H_{\pm\pm}) \ , \quad
\d H_{\pm \pm} 
= v^{-b}_{\pm \pm} e^o_{-b} +  v^{+b}_{\pm \pm} e^o_{+b} + A_{++}^B E^o_B\\
E_A   &=  E^o_A \\ 
e_{+a} &= e^o_{+a} \\
e_{-a} &= (e^o_{-a} + \d e_{-a})\ ,\quad
\d e_{-a} = v_{-a}^{+ b} e_{+ b}^o + A^B_{-a} E^o_B\ ,
\end{split} 
\end{displaymath}
 with components $v_{++}^{+ b}$ identically vanishing  
 on the submanifold $\{z^{+a} = 0\} \subset \cU$. 
\end{definition}

\noindent
The $\bC^{2n}$-valued function
$\ 
(v^{-b}_{++})|_{\cH\cap \cU}\colon \cH \cap \cU\rightarrow \bC^{2n}$, 
appearing as the coefficient of $e^o_{-b}$ in $H_{++}|_{\cH \cap \cU}$, 
is called  the {\bss v-potential} of $\cA$.
We shall see that it effectively parametrises the equivalence classes of  hk-frames.

\noindent
{\bss Remark.} Canonical  hk-frames are called  {\it analytic frames} in the 
harmonic space literature \cite{gios_book}. 

\noindent
There is another important class of hk-frames:

\begin{definition}  \label{centralframe} 
A {\bss central hk-frame} is an  hk-frame  $\cA$   
on an appropriate open subset   
$\cU \subset \cP$, in which  the  vector fields take the form
\begin{align*}
 H_0 &= H_0^o\ ,\quad 
 H_{\pm\pm} = H^o_{\pm\pm}\ ,\quad 
 E_A  = E^o_A\ ,\\
 e_{\pm a} &= e^o_{\pm a} +  v^{+b}_{\pm a} e_{+b}^o + v^{-b}_{\pm a} e_{-b}^o 
                         + A_{\pm a}^B E_B^o\,,
\end{align*}
where the components $v^{+b}_{\pm a} $, $v^{-b}_{\pm a} $, $A_{\pm a}^B$ are  
holomorphic functions.
The  collection  $ \cA^o = (H^o_0, H^o_{\pm\pm}, E^o_A, e^o_{\pm a})$ 
of left-invariant vector fields  on $\cP$, forming  the standard basis of $\gp$, 
is called the {\bss flat hk-frame}.
\end{definition}

We shall see  that  if appropriate reality conditions are satisfied,  
every  hk-frame $\cA$,  defined on an appropriate open set $\cU \subset \cP$, determines 
a real analytic pseudo-Riemannian metric $g$ on $M = \cU/G$.  
In this case,   the functions  $R_{ab}^A$, appearing in \eqref{curv_constr},  
are components of the  curvature tensor of $(M, g)$. 
In particular, the flat hk-frame corresponds to a flat pseudo-Riemannian metric.

A   local biholomorphism 
$\varphi \colon \cU   \rightarrow \cU' $ between two appropriate open subsets of $\cP$, 
with  components in central coordinates $\f = (\varphi^i_{\pm}, \varphi^a_b, \varphi^{ia})$ such that 
$\varphi^i_{\pm}(U, B, z) = u^i_{\pm}$ and  mapping  a central hk-frame  $\cA$ into  a canonical  hk-frame $\cA' = \f_*(\cA)$, 
  is called  a {\bss bridge} between $\cA$ and $\cA'$.   From the definitions of central and canonical hk-frames, this means that 
$\f$ is a biholomorphism  satisfying
 \beq \label{bridge}
\begin{split} 
& \varphi^i_{\pm}(U, B, z) = u^i_{\pm}\, \\
 & \varphi_*( E_A^o) =  E^o_A\,,\quad  
\varphi_*( H^o_0) = H^o_0\,,\quad   
\varphi_*(e_{+a}) =e^o_{+ a}\,, \\
\end{split}
\eeq
with the property that  the vector fields $H'_{\pm\pm}:= \varphi_*(H^o_{\pm\pm})$, $e'_{-a}:= \varphi_*(e_{-a})$
have the form prescribed in Def. \ref{canon_frame}.

Canonically associated with an hk-frame $\cA= (H_0, H_{\pm\pm}, E_A, e_{\pm a})$, 
there exists an  {\bss absolute hk-parallelism},  a $\bC$-linear map  
$\ \a^\cA\colon \gp \rightarrow \gX^{\text{hol}}(\cU)\, $  from the (abstract) Lie 
algebra $\gp$ to the holomorphic vector fields on  $\cU\subset \cP$, defined by
\beq \label{corr1} 
\a^\cA(H_0^o) = H_0\,,\ \, \a^\cA(H^o_{\pm\pm}) = H_{\pm\pm}\,,\ \, 
\a^\cA(E^o_A) = E_A\,,\ \, \a^\cA(e^o_{\pm a}) = e_{\pm a}\,.
\eeq
This map satisfies the following conditions:
\begin{itemize} [itemsep=4pt, leftmargin=20pt]
\item[a)] 
 $\a^\cA$ is a holomorphic absolute parallelism,  i.e.\  it gives a linear 
 isomorphism between $\gp$ and $T^{10}_w \cU$   for every  $w \in \cU$. 

\item[b)]  The restriction 
$\a^\cA|_{\gg}\colon \gg \rightarrow \Span_\bC\{H_0, H_{\pm\pm}, E_A\}$ 
coincides with the  map
$\wh \rho_*\colon \gg \simeq T_e G \rightarrow \gX^{\text{hol}}(\cU)$ 
in Def.\,\ref{hkframe} 
corresponding to the right action  $\rho\colon \cU {\times} G \rightarrow \cU$.

\item[c)] $\a^\cA([X, v]) = [\a^\cA(X), \a^\cA(v)]\ $ for all  $\,X \in \gg\,$ and $\,v \in V$.

\item[d)] 
$[\a^\cA(v), \a^\cA(v')]_w \in \a^\cA(\gsp_n(\bC))\ $ for all   $\,v, v' \in V\,,\, w \in \cU\,$.
\end{itemize}
Conversely,  for a given right action $\rho$ of $G$ on an 
appropriate open subset $\cU \subset \cP$ and a $\bC$-linear map $\a^\cA$ 
satisfying a) -  d),  the  vector fields defined by \eqref{corr1} constitute  an hk-frame.

\section{The main theorems}
\label{classify}
\subsection{Canonical hk-pairs}
  
 \noindent 
For a  real  $\cC^\infty$-manifold $M$, we denote by $\gX(M)$  the space  of smooth vector fields 
on $M$. Given an hk-frame $\cA$ on an appropriate open subset $\cU \subset \cP$, we call the $\bR$-linear map
\begin{equation}
\a^\cA_{(\bR)}\colon \gp \rightarrow \gX(\cU)\ ,\qquad 
X \mapsto \a^\cA_{(\bR)} (X) := 2 \Re(\a^\cA(X))\ ,
\label{reality}
\end{equation}
the  {\bss real absolute hk-parallelism} associated with $\cA$. 
Notice that
$$
\a^\cA_{(\bR)}(i X) = J_o \a^\cA_{(\bR)}(X)
$$
 for all $X \in \gg$, 
where  $J_o$  is the {\it real} $(1,1)$-tensor field corresponding to  the standard  
complex structure of  $\cP$,   and  that $ \a^\cA(X) =(\a^\cA_{(\bR)}(X))^{10}$ .

Our  classification of    (local) isometry classes of pseudo-hyperk\"ahler metrics is 
based on the following:

\begin{definition}\label{hk-pair}
An {\bss hk-pair} of signature $(4p,4q)$ is a pair $(\cA, M)$, consisting of  
an hk-frame $\cA$ on  an appropriate open subset $\cU \subset \cP$  and  a totally 
real $4n$-dimensional  submanifold $M \subset \cU$  passing through $e = (I_2, I_{2n}, 0)$ and satisfying the 
following conditions:

\begin{itemize} [itemsep=4pt, leftmargin=20pt]
\item[i)] 
$M$ is transversal to the  $\gsp_{p,q}$-orbits, i.e. 
$T_x M \cap \a^\cA_{(\bR)}(\gsp_{p,q})\vert_x= \{0\} \ {\rm for\ all}\  x $

\item[ii)]  $T_x M \subset \a^\cA_{(\bR)}(V^\t + \gsp_{p,q})\vert_x$
%
%
 for all $x \in M$. 
\end{itemize}

\noindent
Two  hk-pairs   $(\cA, M), (\cA', M')$  are called  {\bss locally equivalent\/} 
if there exists a $G$-equivariant  biholomorphism  $\varphi\colon \cU \rightarrow \cU'$ 
mapping $\cA$ into  $\cA' $   and   $M$ into  $M'$.  

\noindent
The  {\bss flat hk-pair}  $(\cA^o, M^{o})$ of signature $(4p,4q)$ consists of
the flat hk-frame $\cA^o$ on $\cP$ together with the  real submanifold 
\beq \label{flat manifold} 
M^{o} =   \{I_2\} {\times} \{I_{2n} \} {\times} (\bC^{4n})^\t \subset \cP \,.
\eeq
\end{definition}

\begin{definition}\label{canon_hkpair}  
An  hk-pair $(\cA, M)$  on   $\cU \subset \cP$ is called {\bss  canonical\/} if 
\begin{itemize}  [itemsep=4pt, leftmargin=18pt]
\item[a)] the hk-frame $\cA$  is
canonical (Def.\ \eqref{canon_frame}), 

\item[b)]  there exists  a bridge 
$\varphi \colon \cU'   \rightarrow \cU $ that maps a central hk-frame  $\wh \cA$ on $\cU'$  to  
$\cA = \varphi_*(\wh \cA)$ and

\item[c)]  there exists a real submanifold  
$\wh M \subset \{I_2\}\times\{I_{2n}\} \times \bC^{4n}$ passing through $e = (I_2, I_{2n}, 0)$ 
such that the bridge $\varphi$ determines a local equivalence between 
$(\wh \cA, \wh M)$  and  $(\cA, M)$.
\end{itemize}

\end{definition}

\subsection{Correspondence between hk-pairs and pseudo-hyperk\"ahler metrics}
\label{hkpairs-metrics}
\noindent 
Pseudo-hyperk\"ahler metrics and  hk-pairs  are related as  follows.
Consider  an hk-pair  $(\cA, M)$  of signature $(4p, 4q)$, with associated  
holomorphic action  $\r\colon \cU {\times} G \rightarrow G$,  and let 
$$
\r^\t = \r|_{\cU \times G^\t}\colon   \cU \times G^\t \rightarrow \cU\ ,\qquad 
G^\t =  \Sp_1 \times \Sp_{p,q}\ ,
$$
be the induced right action of  $G^\t$. 
The infinitesimal  transformations of $\rho^\t$  are, by construction,  
the real vector fields in  $\  \gg^{\cA,\t} := \a^\cA_{(\bR)}(\gg^\t)$.
In accordance with Def.\ \ref{hk-pair},   the union of $\Sp_{p,q}$-orbits
\beq \label{USP}
 \cU^{(\Sp_{p,q})} = M {\cdot} \Sp_{p,q} :=\bigcup_{x \in M} x \cdot\Sp_{p,q}
\eeq
is a manifold,  $\Sp_{p,q}$-equivariantly diffeomorphic to  $M {\times} \Sp_{p,q}\,$, 
and the $4n$-vectors, 
\begin{align} 
e^{\t}_{I}\vert_{x} & = \a^\cA_{(\bR)}(e^{o\t}_{I})\vert_{x},
\quad  I = 1,\dots, 4n, \ \ x \in M\ ,
\label{tau_basis}
\end{align}
with $I$ labelling the ordered  index pairs $(+1,\dots,+2n, -1,\dots,-2n)$,
belong to   the  vector space
\beq \label{decomp}
T_x \cU^{(\Sp_{p,q})}  = T_x M + \a^\cA_{(\bR)}(\gsp_{p,q}) .
\eeq
Here   $e^{o\t}_I$ denote a choice of  basis vectors   for the
 $4n$-dimensional real $\t$-invariant subspace $V^\t \subset V = \bC^{4n}$. 
By \eqref{decomp} and  $\Sp_{p,q}$-equivariance,  the restrictions to  $\cU^{(\Sp_{p,q})}$ 
of the vector fields  $ e^\t_{I} = \a^\cA_{(\bR)}(e^{o\t}_{I})$ 
are   tangent to  $\cU^{(\Sp_{p,q})}$ at all its points.

Now, we choose a section $\s\colon M \rightarrow \cU^{(\Sp_{p,q})}$
of the trivial bundle
$\pi\colon \cU^{(\Sp_{p,q})} \simeq M \times \Sp_{p,q} \rightarrow M $ and we   
consider the vector fields  on $M$
$$
e^{(\s)}_{I} =  \left\{  e^{(\s)}_{I}|_x  := \pi_*( e^\t_{I}\vert_{\s(x)})\,, 
                                      \quad    x \in M \right\} \  .
$$
There clearly exists   a unique  real analytic, pseudo-Riemannian metric $g$  of 
signature $(4p,4q)$,  for which  the   $(e^{(\s)}_I|_x)$ are vielbeins, i.e.
\beq \label{metric} 
g(e^{(\s)}_{I}, e^{(\s)}_{J}) 
          = (I_4 \otimes \h)_{IJ} \ .
\eeq
The properties of the absolute hk-parallelism $\a^\cA$ imply that for 
any other section $\s'\colon M \to \cU^{(\Sp_{p,q})}$,  the corresponding frames  
$\big(e^{(\s')}_{I}|_x 
:=\pi_*(\left.e^\t_{I}\right|_{\s'(x)} )\big)$ 
are also vielbeins  for this metric, which thus
does not depend on the choice of  $\s$
and  is uniquely associated with the hk-pair  $(\cA, M)$ 
(see Lemma \Aref{sigma-indep}).

\noindent
Moreover, the following proposition holds by construction:
\begin{prop} 
\label{loc.equiv}
Let $(M, g)$ and $(M', g')$ be real analytic pseudo-hyperk\"ahler manifolds  
of signature $(4p,4q)$ corresponding, in the above-described fashion, 
to hk-pairs $(\cA, M)$ and  $(\cA', M')$, respectively. 
Then  $(M, g)$ and $(M', g')$  are locally isometric if and only if 
the  hk-pairs $(\cA, M)$ and $(\cA', M')$ are  locally equivalent.
\end{prop}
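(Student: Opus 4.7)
The plan is to establish both directions by tracking how the vielbeins $e^{(\sigma)}_I$ of the metric $g$ on $M$ are built from the real vector fields $e^\tau_I = \a^\cA_{(\bR)}(e^{o\tau}_I)$ of the hk-pair, so that isometries and hk-equivalences end up being two packagings of the same data. The forward implication amounts to a clean chase of definitions; the reverse implication, which is the substantive one, rests on the standard fact that an isometry between real analytic pseudo-Riemannian manifolds lifts canonically to a parallelism-preserving map of holonomy bundles, together with the observation that such a real map complexifies holomorphically to the ambient bundle $\cU$.

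For the implication ``locally equivalent hk-pairs imply locally isometric metrics'', suppose $\varphi\colon \cU \to \cU'$ is a $G$-equivariant biholomorphism with $\varphi_*\cA = \cA'$ and $\varphi(M) = M'$. Because $\varphi$ is holomorphic, $\varphi_*$ commutes with taking real parts, whence $\varphi_* \circ \a^\cA_{(\bR)} = \a^{\cA'}_{(\bR)}$ and in particular $\varphi_*(e^\tau_I) = e'^\tau_I$. The $\Sp_{p,q}$-equivariance of $\varphi$ then guarantees that, for any section $\sigma\colon M \to \cU^{(\Sp_{p,q})}$ used in \eqref{metric} to define $g$, the composition $\sigma' := \varphi \circ \sigma \circ (\varphi|_M)^{-1}$ is a section of $\pi'\colon \cU'^{(\Sp_{p,q})} \to M'$. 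A one-line chain rule calculation then gives $(\varphi|_M)_*(e^{(\sigma)}_I) = e'^{(\sigma')}_I$, so $\varphi|_M$ takes vielbeins of $g$ to vielbeins of $g'$ with the same Gram matrix $I_4 \otimes \eta$, and is therefore an isometry.

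For the converse, start with a local isometry $f\colon M \to M'$, necessarily real analytic since $g$ and $g'$ are. By the standard lift of Riemannian isometries to holonomy bundles, $f$ induces a unique $\Sp_{p,q}$-equivariant real analytic diffeomorphism $\tilde f\colon \cU^{(\Sp_{p,q})} \to \cU'^{(\Sp_{p,q})}$ carrying each vielbein field $e^\tau_I$ and each fundamental $\gsp_{p,q}$-field of $\cA$ to the corresponding object of $\cA'$. I would then enlarge the domain by requiring $\tilde f$ to intertwine the flows of the real vector fields $\a^\cA_{(\bR)}(X)$ and $\a^{\cA'}_{(\bR)}(X)$ for $X \in \gsp_1$, producing a real analytic extension to the orbit $M \cdot G^\tau = M \cdot \Sp_1 \cdot \Sp_{p,q}$. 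A dimension count, using the transversality conditions (i) and (ii) of Def.\ \ref{hk-pair}, shows that $M \cdot G^\tau$ is a totally real submanifold of $\cU$ of maximal real dimension, so real analytic continuation yields a unique holomorphic extension $\varphi\colon \cU \to \cU'$, after possibly shrinking $\cU$. Uniqueness of holomorphic extension then forces $\varphi$ to be $G$-equivariant and to satisfy $\varphi_*\cA = \cA'$, since both conditions already hold on the totally real submanifold.

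The main obstacle will be making this reverse direction fully rigorous: one must verify that the parallelism-preserving lift on $\cU^{(\Sp_{p,q})}$ really does extend consistently by $\Sp_1$-equivariance to $M \cdot G^\tau$, and that the resulting holomorphic extension respects the entire hk-frame rather than merely the vielbeins and the $\gsp_{p,q}$-fundamental fields. The rigidity needed for this comes from the bracket relations \eqref{algebra}--\eqref{curv_constr} together with the intrinsic characterisation of the Levi-Civita connection, and it is precisely here that the appendix's reformulation of real analytic $G$-structures in terms of holomorphic frame fields will be invoked to close the argument.
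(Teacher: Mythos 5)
Your proposal is correct and follows essentially the route the paper intends: the paper itself offers no written proof beyond the remark that the proposition ``holds by construction'', deferring the substance to Sect.\ \ref{hkpairs-metrics} and the appendix (Lemma \Aref{sigma-indep}, Lemma \Aref{appr_lem}, Proposition \Aref{realizations}), and your argument --- the definitional chase for the forward direction, and for the converse the lift of the isometry to the parallelised bundle $\cU^{(\Sp_{p,q})}$ followed by equivariant extension to the maximally totally real submanifold $M\cdot G^\t$ and unique holomorphic continuation --- is exactly the complexification machinery those appendix results encode. The remaining points you flag (consistency of the $\Sp_1$-extension and propagation of the intertwining to the full hk-frame via the bracket relations) are indeed where the appendix is invoked, so no genuine gap remains.
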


\noindent
Further (Theorem \Aref{loc.isom}):\\
 {\it In each local isometry class  of (germs of) real analytic  pseudo-hyperk\"ahler manifolds  of signature 
$(4p,4q)$, there is a  pseudo-hyperk\"ahler manifold $(M, g)$ 
which is determined by  an hk-pair $(\cA, M)$ of signature $(4p,4q)$ in the above-described fashion}.

\noindent
It follows immediately  that:

\begin{theo}\label{bi_hkpairs}
There is a natural one to one correspondence between the local 
isometry classes of (germs of) real analytic pseudo-hyperk\"ahler manifolds  and
the local equivalence classes of (germs of) hk-pairs.
\end{theo}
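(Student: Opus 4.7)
The plan is to deduce Theorem \ref{bi_hkpairs} by combining the construction of Section \ref{hkpairs-metrics} with the two preceding statements, Proposition \ref{loc.equiv} and Theorem \Aref{loc.isom}, into a well-defined bijection. The construction in \S\ref{hkpairs-metrics} already assigns to an hk-pair $(\cA, M)$ a real analytic pseudo-hyperk\"ahler metric $g$ on $M$, defined by declaring the pushed-down frames $e^{(\s)}_I$ to be vielbeins as in \eqref{metric}. By the $\s$-independence noted after \eqref{metric}, the germ of $(M, g)$ at $e$ depends only on $(\cA, M)$ and not on the auxiliary section $\s$, so the assignment $(\cA, M) \longmapsto [(M, g)]$ is well-defined on actual pairs.

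First, I would verify that this assignment descends to a map
\[
\Phi\colon \bigl\{\text{local equivalence classes of hk-pairs}\bigr\} \longrightarrow \bigl\{\text{local isometry classes of real analytic pseudo-hyperk\"ahler germs}\bigr\}.
\]
This is precisely the ``only if'' direction of Proposition \ref{loc.equiv}: a $G$-equivariant biholomorphism $\varphi\colon \cU \to \cU'$ carrying $\cA$ to $\cA'$ and $M$ to $M'$ intertwines the real absolute hk-parallelisms $\a^{\cA}_{(\bR)}$ and $\a^{\cA'}_{(\bR)}$, hence carries the restrictions of the $e^{\t}_I$ to corresponding vector fields on $\cU^{(\Sp_{p,q})}$ and $\cU'^{(\Sp_{p,q})}$. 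After choosing compatible sections $\s, \s' = \varphi \circ \s$, the induced vielbein frames match under the diffeomorphism $\varphi|_M\colon M \to M'$, so $\varphi|_M$ is a local isometry between $(M, g)$ and $(M', g')$. Thus $\Phi$ is well-defined.

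Next, $\Phi$ is injective: this is the ``if'' direction of Proposition \ref{loc.equiv}, which I take as given. Finally, $\Phi$ is surjective: by Theorem \Aref{loc.isom}, every local isometry class of germs of real analytic pseudo-hyperk\"ahler manifolds of signature $(4p, 4q)$ contains a representative $(M, g)$ that arises from some hk-pair $(\cA, M)$ via the construction of \S\ref{hkpairs-metrics}. Combining well-definedness, injectivity and surjectivity, $\Phi$ is a bijection, which is the content of Theorem \ref{bi_hkpairs}.

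Since both Proposition \ref{loc.equiv} and Theorem \Aref{loc.isom} are stated as inputs (their proofs are deferred to Sections \ref{pf_vpot_thm}, \ref{pf_prepot_thm} and to the appendix \S\ref{bijection}), no genuine obstacle remains here: the only real content at this stage is checking that the section-independence and $G$-equivariance arguments combine cleanly so that passing to local equivalence classes on the domain and to local isometry classes on the target produces the same map in both directions. The nontrivial work — extending the holomorphic frame fields of an analytic pseudo-hyperk\"ahler manifold to an hk-frame on an appropriate $\cU \subset \cP$, verifying the transversality conditions (i) and (ii) of Definition \ref{hk-pair}, and showing that any $G$-equivariant biholomorphism intertwining two hk-frames is forced by an underlying local isometry — is exactly what Theorem \Aref{loc.isom} and Proposition \ref{loc.equiv} encapsulate, so this theorem reduces to recording the bijection $\Phi$ that they jointly produce.
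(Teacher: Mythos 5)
Your proposal is correct and follows essentially the same route as the paper, which likewise deduces the theorem immediately from Proposition \ref{loc.equiv} (for well-definedness and injectivity), Theorem \Aref{loc.isom} (for surjectivity), and the section-independence of the metric recorded in Lemma \Aref{sigma-indep}. The only quibble is that you have swapped the labels ``if'' and ``only if'' for the two directions of Proposition \ref{loc.equiv} (local equivalence $\Rightarrow$ local isometry is the ``if'' direction and gives well-definedness, while the converse gives injectivity), but this is immaterial to the argument.
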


\subsection{Prepotentials of   pseudo-hyperk\"ahler metrics}  
\noindent 
According to the  above results,  the classification of local isometry classes of  
real analytic  pseudo-hyperk\"ahler metrics  corresponds  to  
the classification  of   local equivalence  classes of  hk-pairs.
The latter is achieved  by means of the following two fundamental results.
 
 \begin{theo} \label{vpot_thm} 
Every local equivalence class of (germs of) hk-pairs contains a canonical hk-pair. 
Moreover,  if  hk-pairs  $(\cA, M)$ and $(\cA', M')$ are both canonical and  
 have identical v-potentials, then $\cA = \cA'$ and 
$(\cA, M)$ and $(\cA', M')$  are locally equivalent.
 \end{theo}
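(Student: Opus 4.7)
The theorem has two halves: existence of a canonical representative in every local equivalence class, and rigidity of a canonical hk-pair given its v-potential. My plan is to split the existence argument into two stages — first producing a locally equivalent \emph{central} hk-pair by exploiting the principal $G$-bundle structure, and then constructing a \emph{bridge} in the sense of \eqref{bridge} from the central frame to a canonical one — and to deduce the rigidity statement by using the Lie bracket relations of Definition \ref{hkframe} to recover every other component of a canonical hk-frame from the v-potential by means of the harmonic-space PDE theory of Section \ref{harmonics}.

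For the reduction to a central hk-pair, I would use condition (c) of Definition \ref{hkframe}: the map $\pi\colon\cU\to\cU/G$ is a principal $G$-bundle whose fundamental fields are $H_0,H_{\pm\pm},E_A$. Choose a local holomorphic section $\sigma\colon\cV\to\cU$ through $e=(I_2,I_{2n},0)$ whose image lies in $\{I_2\}\times\{I_{2n}\}\times\bC^{4n}$; then $(g,x)\mapsto\rho(\sigma(x),g)$ is a local biholomorphism from a neighbourhood of $(e,0)$ in $\cP$ to a neighbourhood of $e$ in $\cU$ which intertwines the left-invariant standard fields $H^o_0,H^o_{\pm\pm},E^o_A$ with $H_0,H_{\pm\pm},E_A$. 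Pulling back $\cA$ along this biholomorphism therefore yields a central hk-frame $\wh\cA$ in the sense of Definition \ref{centralframe}, and the pre-image of $M$ supplies a real submanifold $\wh M$ forming an equivalent hk-pair $(\wh\cA,\wh M)$.

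I would then construct a bridge $\varphi$ taking $\wh\cA$ to a canonical frame $\cA'=\varphi_*(\wh\cA)$. Writing $\varphi$ in analytic coordinates with $\varphi^i_\pm(U,B,z)=u^i_\pm$, the constraints $\varphi_*(E^o_A)=E^o_A$, $\varphi_*(H^o_0)=H^o_0$ and $\varphi_*(\wh e_{+a})=e^o_{+a}$ together with the prescribed canonical form of $\varphi_*(H^o_{\pm\pm})$ and $\varphi_*(\wh e_{-a})$ translate into an overdetermined first-order system for the unknowns $(\varphi^a_b,\varphi^{\pm a})$. Its integrability is forced by the Lie bracket relations \eqref{curv_constr} already satisfied by the central frame, and the canonical normalisation $v^{+b}_{++}\big|_{\{z^{+a}=0\}}=0$ supplies the initial data that fixes $\varphi$ uniquely. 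Establishing existence of such a $\varphi$ is the principal obstacle, and is exactly what Section \ref{harmonics} is tailored for: one inverts equations of the form $H^o_{++}\cdot f=g$ with prescribed $\U(1)$-charge and initial data on $\{z^{+a}=0\}$, using the decomposition $V=V_++V_-$ to propagate holomorphically along the $H^o_{++}$-orbit. Once $\varphi$ is in hand, $(\cA',\,\varphi(\wh M))$ is the desired canonical hk-pair.

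For the rigidity part, suppose $\cA$ and $\cA'$ are canonical on a common $\cU$ and share the v-potential $v^{-b}_{++}\big|_{\cH\cap\cU}$. Applying $\ad_{H_{--}}$, $\ad_{H_{++}}$ and $\ad_{E_A}$ to \eqref{algebra}–\eqref{curv_constr} yields a recursive cascade of PDEs that expresses each of the remaining canonical components $v^{+b}_{++}$, $v^{\pm b}_{--}$, $A^B_{\pm\pm}$, $v^{+b}_{-a}$, $A^B_{-a}$ in terms of the v-potential and its derivatives, every equation being of the harmonic-space type treated in Section \ref{harmonics}. The canonical condition $v^{+b}_{++}\big|_{\{z^{+a}=0\}}=0$, together with the brackets $[H^o_{++},e_{+a}]=0$ and $[H^o_{++},e_{-a}]=e_{+a}$, provides the initial data making each such equation uniquely solvable in the class of $\Sp_n(\bC)$-equivariant functions of fixed charge. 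Hence $\cA$ and $\cA'$ agree component-by-component, so $\cA=\cA'$, and local equivalence is realised by the identity map.
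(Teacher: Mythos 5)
Your overall architecture (reduce to a central hk-pair, build a bridge to a canonical one, recover the remaining components from the v-potential) matches the paper's, but there are two genuine gaps.

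First, the reduction to a central hk-pair. Trivialising the principal bundle by a section $\sigma$ does identify $H_0, H_{\pm\pm}, E_A$ with $H^o_0, H^o_{\pm\pm}, E^o_A$, but Definition \ref{centralframe} also requires that the fields $e_{\pm a}$ have leading term $e^o_{\pm a}$ and \emph{no components along} $H^o_0, H^o_{\pm\pm}$, and your trivialisation gives no control over $e_{\pm a}$ at all. The bracket relations do not rescue you: writing $e_{+a} = X_{+a} + c^0_{+a}H^o_0 + c^{++}_{+a}H^o_{++} + c^{--}_{+a}H^o_{--} + A^B_{+a}E^o_B$ with $X_{+a}$ the $V$-component, the relations $[H^o_0,e_{+a}]=e_{+a}$ and $[H^o_{++},e_{+a}]=0$ only yield $H^o_0 \cdot c^{--}_{+a}=3\,c^{--}_{+a}$ and $H^o_{++}\cdot c^{--}_{+a}=0$, and by Lemma \ref{lemma_split} a \emph{positively} charged solution of $H^o_{++}\cdot f=0$ need not vanish. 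The paper takes an entirely different route here (Lemma \ref{central}): it passes to the pseudo-hyperk\"ahler metric associated with $(\cA,M)$ and invokes the holomorphic-extension construction of Lemma \Aref{appr_lem}, in which the involutive distribution spanned by $E_A, e_{\pm a}$ is foliated into leaves that the complexification embedding sends into the slices $\{U\}\times\Sp_n(\bC)\ltimes\bC^{4n}$; it is that foliation argument, not the bundle trivialisation, which kills the $\gsp_1(\bC)$-components of $e_{\pm a}$.

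Second, the rigidity half. Your cascade correctly recovers the components of $H_{++}$ from the v-potential (this is \eqref{potentials} in the paper), but for $H_{--}$ and $e_{-a}$ the paper does not run a direct cascade: it first establishes $H_{++}=H'_{++}$, then pulls both frames back through the bridge $\varphi$ so that the discrepancy $\wh H'_{--}-H^o_{--}$ has \emph{negatively} charged components annihilated by $H^o_{++}$, which vanish by Lemma \ref{lemma_split}~a); without the bridge you face a coupled inhomogeneous system whose uniqueness you have not addressed. More seriously, the conclusion that ``local equivalence is realised by the identity map'' is false: equivalence of hk-pairs requires a biholomorphism carrying $M$ onto $M'$, and $M\neq M'$ in general even when $\cA=\cA'$. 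The paper closes this by noting that $M\cdot\Sp_{p,q}$ and $M'\cdot\Sp_{p,q}$ are integral manifolds through $e$ of the same distribution, hence coincide, so $M'$ is a section of $\pi\colon M\cdot\Sp_{p,q}\to M$, the projection is an isometry of the induced metrics, and Proposition \ref{loc.equiv} then gives the equivalence. (A smaller point: the bridge is \emph{not} fixed uniquely by the initial data --- there is large freedom in the choice of the function $g^{-b}$ --- which is exactly why an equivalence class contains many canonical pairs sharing one v-potential.)
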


\begin{theo} \label{prepot_thm}
There exists a one-to-one correspondence between canonical hk-pairs  
and holomorphic functions on   harmonic space 
$ \cL_{(+4)} \colon \cH\vert_\cV \rightarrow \bC$ satisfying
the  system of first order equations,
 \beq   \label{spippolo}  
 e^o_{+a}  {\cdot}\,   \cL_{(+4)} = 0\ ,
 \quad
 H_0^o {\cdot}\, \cL_{(+4)} = 4 \cL_{(+4)} \ ,\quad
 \cL_{(+4)}|_{\Sp_1(\bC) \times \{I_{2n}\} \times \{0\}}  = 0\ . 
 \eeq
More precisely, given such an $\cL_{(+4)}$,  there exists a canonical hk-pair 
$(\cA, M)$ on an appropriate subset $\cU\subset\cP$ with $\cU \cap \cH=\cH\vert_\cV$, 
whose $v$-potential is equal to
\beq  \label{spippolino} 
v^{-a}_{++}\vert_{\cU \cap \cH} = \o^{ab} \big(e^o_{-b} {\cdot}\, \cL_{(+4)}\big).
\eeq
Here $(\o^{ab})$ is the inverse  matrix of $(\o_{ab})$.
Conversely, given a canonical hk-pair $(\cA, M)$,  with v-potential 
$ v^{-a}_{++}|_{\cU \cap \cH}$,   there exists a unique holomorphic function 
$\cL_{(+4)}$  satisfying  \eqref{spippolo} and \eqref{spippolino}.
\end{theo}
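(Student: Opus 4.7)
The plan is to establish the bijection in both directions.

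\emph{From prepotential to canonical hk-pair.} Given $\cL_{(+4)}$ satisfying \eqref{spippolo}, set
$v^{-b}_{++} := \o^{bc}(e^o_{-c}\cdot \cL_{(+4)})$ on $\cH|_\cV$. Using $[e^o_{+a}, e^o_{-c}] = 0$ together with $e^o_{+a}\cdot \cL_{(+4)} = 0$ gives $e^o_{+a}\cdot v^{-b}_{++} = 0$; charge arithmetic forces $v^{-b}_{++}$ to have charge $+3$. To promote $v^{-b}_{++}$ to the v-potential of a bona fide canonical hk-pair, I would first extend $v^{-b}_{++}$ $G$-equivariantly from $\cH|_\cV$ to a neighbourhood $\cU\subset\cP$, then construct the remaining components $(v^{\pm b}_{\pm\pm}, A^B_{\pm\pm}, v^{+b}_{-a}, A^B_{-a})$ of the canonical hk-frame by solving the hierarchy of first-order equations implied by \eqref{algebra}--\eqref{curv_constr}. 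For instance, $[H_{++}, H_{--}] = H_0$ becomes an evolution equation for $H_{--}$ along the $H_{++}$-flow, while $[H_{\pm\pm}, e_{\mp a}] = e_{\pm a}$ and $[e_{\pm a}, e_{\pm b}] = 0$ determine $e_{-a}$ from the data $e_{+a}=e^o_{+a}$. Existence, uniqueness and mutual consistency of these holomorphic solutions---together with the gauge condition $v^{+b}_{++}|_{\{z^{+a}=0\}}=0$ and the initial condition $\cL_{(+4)}|_{z=0} = 0$---constitute the substance of the harmonic-space technical lemmas of Sect.\ \ref{harmonics}. Theorem \ref{vpot_thm} then guarantees that the resulting canonical hk-pair is unique.

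\emph{From canonical hk-pair to prepotential.} Conversely, starting from a canonical hk-pair one has the v-potential $v^{-a}_{++}$ on $\cH|_\cV$; one must produce $\cL_{(+4)}$. Equation \eqref{spippolino} recasts \eqref{spippolo} as the overdetermined linear system
\[
e^o_{-b}\cdot \cL_{(+4)} = \o_{ba}\, v^{-a}_{++}, \qquad e^o_{+a}\cdot\cL_{(+4)}=0,\qquad \cL_{(+4)}|_{z=0}=0.
\]
Solvability requires (i) $z^{+a}$-independence of the right-hand side, obtained by expanding $[H_{++},e_{+a}]=0$ (with $e_{+a}=e^o_{+a}$ in a canonical frame), extracting the coefficient of $e^o_{-b}$, and using $[e^o_{-b},e^o_{+a}]=0$; and (ii) symmetry of $\o_{ba}(e^o_{-c}\cdot v^{-a}_{++})$ in $(b,c)$ for integrability along the $z^{-a}$-directions, which I expect to extract from a further bracket relation in the canonical frame---for example, the $e^o_{-b}$ coefficient of $[H_{++},e_{-a}]=e_{+a}$ combined with the $\gsp_n(\bC)$-equivariance of the frame components. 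Given (i) and (ii), a Poincar\'e-type lemma on the simply connected $\cV$ delivers the required $\cL_{(+4)}$ uniquely, and its charge $+4$ is forced by the charge $+3$ of $v^{-a}_{++}$ and the charge $-1$ of $e^o_{-b}$.

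The principal obstacle is the first direction: showing that an \emph{unconstrained} $\cL_{(+4)}$ suffices, i.e., that no further compatibility is imposed on $\cL$ by the requirement that $v^{-b}_{++}$ extend to a full canonical hk-frame. Every integrability condition encountered in the iterative construction of the remaining frame components must close automatically, and this rests essentially on the five-fold grading $\gp=\gp_{-2}\oplus\gp_{-1}\oplus\gp_{0}\oplus\gp_{+1}\oplus\gp_{+2}$ and on the $\Sp_1(\bC)$-structure of harmonic space. The technical machinery of Sect.\ \ref{harmonics} is engineered precisely to deliver this closure, which is what ultimately permits prepotentials to parametrise local isometry classes of pseudo-hyperk\"ahler metrics without any constraints.
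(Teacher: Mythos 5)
Your converse direction (canonical hk-pair $\Rightarrow$ prepotential) matches the paper's Step 2 essentially verbatim: $z^{+a}$-independence from the $e^o_{-b}$-component of $[H_{++},e_{+a}]=0$, the symplectic symmetry \eqref{symplectic1} from the $e^o_{-b}$-component of $[H_{++},e_{-a}]=e_{+a}$ (the point being that $e^o_{-a}\cdot v^{-b}_{++}=A^C_{++}(E^o_C)^b_a$ takes values in $\gsp_n(\bC)\simeq\vee^2\bC^{2n}$, hence is $\o$-symmetric), and then a Poincar\'e lemma on the simply connected $\cV$ with the normalisation $\cL_{(+4)}|_{z=0}=0$. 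That part is fine.

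The forward direction, however, has a genuine gap: you correctly identify that the whole difficulty is the automatic closure of the bracket hierarchy, but you never supply the device that produces it, and "solving $[H_{++},H_{--}]=H_0$ as an evolution equation for $H_{--}$ along the $H_{++}$-flow" will not work as stated --- that single equation determines $H_{--}$ only up to data on a transversal to the flow, and you would still have to verify $[H_0,H_{--}]=-2H_{--}$, $[H_{--},e_{-a}]=0$, $[e_{+a},e_{-b}]\in\Span\{E_A\}$, etc., each of which is a separate integrability problem. The paper's key idea, which your proposal omits entirely, is to construct a \emph{bridge}: a biholomorphism $\varphi$ with $\varphi^i_\pm=u^i_\pm$ solving $\varphi_*(E^o_A)=E^o_A$, $\varphi_*(H^o_0)=H^o_0$, $\varphi_*(H^o_{++})=H_{++}$ (Lemmas \ref{ic_lem}--\ref{strange}, which reduce to the nonlinear system \eqref{sisteminoquater1}--\eqref{sisteminoquater} solved globally on $\cH|_\cV$ via Lemma \ref{charge2_lem} and Lemma \ref{lemma_split}), and then to \emph{define} $H_{--}:=\varphi_*(H^o_{--})$ and $e_{-a}:=[H_{--},e^o_{+a}]$. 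Because push-forward is a Lie-algebra homomorphism, the $\gsp_1(\bC)$-relations and the equivariance relations close for free, and the remaining brackets ($[H^o_{--},\wh e_{-a}]=0$, the form of $[e_{+a},e_{-b}]$, the vanishing of $v^{-c}_{-b}$) are killed by charge arguments using Lemma \ref{lemma_split}(a) applied to the pulled-back frame $\varphi^{-1}_*(\cA)$. Note moreover that Definition \ref{canon_hkpair} \emph{requires} the existence of a bridge and of a real submanifold $\wh M$ in the central picture, so without constructing $\varphi$ (and then $M=\varphi(M')$ for an integral leaf $M'$ of the real distribution $\cD'$, via Frobenius as in Lemma \ref{hk_lem}) you have not produced an object that qualifies as a canonical hk-pair at all. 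Your appeal to "the technical machinery of Sect.\ \ref{harmonics}" is not a substitute: those lemmas solve scalar and $\bC^{4n}$-valued equations on $\Sp_1(\bC)$ and on harmonic space, but it is the bridge that organises them into a consistent frame.
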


The holomorphic function $\cL_{(+4)}$  is  the {\bss prepotential}\footnote{Our 
sign convention for the prepotential differs  from the customary one (e.g.\  \cite{gios_book, do1}).} 
of the canonical hk-pair $(\cA, M)$.  The space of prepotentials parametrises
the local equivalence classes of real analytic pseudo-hyper\-k\"ahler manifolds. 
Given an unconstrained prepotential $ \cL_{(+4)}$  satisfying \eqref{spippolo},   
all the vector fields of the associated canonical  hk-pair $(\cA, M)$ may be 
obtained explicitly by solving a  system of partial   differential equations on 
harmonic space $\cH|_\cV$. The corresponding pseudo-hyperk\"ahler manifold can then be 
determined according to the procedure of  Sect.\ \ref{hkpairs-metrics}.
Since the  equivalence classes of (germs of) hk-pairs are  in one to one 
correspondence with the (germs of) real analytic pseudo-hyperk\"ahler metrics 
(Theorem \ref{bi_hkpairs}) and each of them contains a canonical hk-pair 
(Theorem \ref{vpot_thm}),  the  parametrisation of pseudo-hyperk\"ahler metrics  
advertised in the Introduction is established.
 
In the next section we discuss some technical  properties of holomorphic functions on 
$\Sp_1(\bC)$, which are essential in  our discussion.  We  then  prove 
Theorem \ref{vpot_thm} in  Sect.\ \ref{pf_vpot_thm} and 
Theorem \ref{prepot_thm}  in    Sect.\ \ref{pf_prepot_thm}.  
In Sect.\ \ref{reconstruct},  we describe a five-step recipe  for the explicit 
construction of a pseudo-hyperk\"ahler metric from its prepotential. 

\section{Holomorphic functions on $\Sp_1(\bC)$}
\label{harmonics}
 \noindent 
 Consider the standard coordinates of $\GL_2(\bC)$,
 $$
 (u^1_{+}, u^2_{+}, u^1_-, u^2_{-}) \colon \GL_2(\bC) \longrightarrow \bC^4 ,
 $$
which associate with every matrix 
$U = \left(\smallmatrix u^1_+ & u^1_-\\ u^2_+ & u^2_- \endsmallmatrix \right) $ 
the values of its entries, and  the class of meromorphic functions  
$\ h\colon \GL_2(\bC) \rightarrow \bC$ of the form 
\beq \label{mero} 
h = \sum_{ p,q,r,s \in \bZ} c_{pqrs} 
                                            (u^1_+)^{p} (u^2_+)^{q} (u^1_-)^{r} (u^2_-)^{s} .
\eeq
Two such maps $h,h'$ are called  
{\bss Sp$_{\mathbf{1}}{\boldsymbol{(\bC)}}$-equivalent}  
if $\ h|_{\Sp_1(\bC)} = h'|_{\Sp_1(\bC)}$.  Since the elements  
$\  U=(u^i_\pm) \in \Sp_1(\bC)\subset \GL_2(\bC)$ are constrained by 
\beq \label{det1} 
\det U = u^1_+ u^2_- - u^2_+ u^1_- = 1\,,\eeq
any  one coordinate from  $\{u^1_{+}, u^2_{+}, u^1_-, u^2_{-}\}$  is 
$\Sp_1(\bC)$-equivalent to a rational function of the other three. It follows that  
every function \eqref{mero}  is $\Sp_1(\bC)$-equivalent to four others,  
obtained by expressing each of the four coordinates  in terms of  the  others 
in accordance with \eqref{det1}.  The meromorphic functions obtained in this way are said to be 
in {\bss reduced form}. Clearly,  each $\Sp_1(\bC)$-equivalence class of meromorphic 
functions \eqref{mero} contains at most four distinct functions in reduced form. 

The meromorphic functions of the form \eqref{mero} are   related to the
holomorphic functions of $\Sp_1(\bC)$. Indeed, we have the following:

\begin{lemma} \label{Sp1_restr} 
Any  holomorphic function $\,g\colon \Sp_1(\bC) \,{\rightarrow}\, \bC\,$  is 
a restriction of   some meromorphic function  $\,h\colon \GL_2(\bC) \,{\rightarrow}\, \bC\,$ 
of the form \eqref{mero},  $\,g = h|_{\Sp_1(\bC)}$.
Further, if the function $h$    thus  associated with $g$ is in reduced form, 
its coefficients $c_{pqrs}$  are uniquely  
determined by  the  expansion of  $\, g|_{\Sp_1}$ in  generalised spherical functions. 
\end{lemma}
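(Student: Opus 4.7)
The plan is to reduce the lemma to the representation theory of $\Sp_1(\bC) = \SL_2(\bC)$ and a Peter--Weyl type expansion of holomorphic functions on this complex reductive group.

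First, I would recall that the irreducible holomorphic finite-dimensional representations of $\Sp_1(\bC)$ are the symmetric powers $\mathrm{Sym}^k \bC^2$, $k \in \bZ_{\geq 0}$, and that the matrix coefficients of these representations in the weight basis of the diagonal torus---the \emph{generalized spherical functions} $D^k_{mn}$---are homogeneous polynomials of degree $k$ in the entries $u^i_\pm$. In particular each $D^k_{mn}$ automatically extends to a polynomial, and hence holomorphic, function on all of $\GL_2(\bC)$.

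Next, by the holomorphic Peter--Weyl theorem for the complex reductive group $\Sp_1(\bC)$ (equivalently, by applying the classical Peter--Weyl theorem to the real-analytic restriction $g|_{\SU(2)}$ and invoking holomorphic extension across the totally real submanifold $\SU(2) \subset \Sp_1(\bC)$), every holomorphic $g$ on $\Sp_1(\bC)$ admits a convergent expansion $g = \sum_{k,m,n} c^k_{mn}\, D^k_{mn}|_{\Sp_1(\bC)}$. Summing the same series on $\GL_2(\bC)$, where each term is a polynomial in $u^i_\pm$, then yields a function $h$ of the form \eqref{mero} (initially with non-negative exponents), holomorphic on a neighbourhood of $\Sp_1(\bC)$ in $\GL_2(\bC)$ and satisfying $h|_{\Sp_1(\bC)} = g$.

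Once such an $h$ is constructed, I would use the determinantal relation \eqref{det1} to eliminate any chosen one of the four coordinates $u^i_\pm$ from the expansion, at the cost of introducing negative powers; this places $h$ in one of the four available reduced forms. For uniqueness, suppose two reduced-form expansions in the same three coordinates (say $(u^1_+, u^2_+, u^1_-)$) agree on $\Sp_1(\bC)$. Rewriting each $D^k_{mn}$ as a Laurent polynomial in those three coordinates by means of \eqref{det1} and matching coefficients via the orthogonality of the $D^k_{mn}$ on $\SU(2)$ shows that each $c_{pqrs}$ is uniquely determined by the inner products $\int_{\SU(2)} g \cdot \overline{D^k_{mn}}\, dU$.

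The main obstacle will be the convergence step: although the polynomials $D^k_{mn}$ grow with $k$ away from $\SU(2)$, the decay of the $c^k_{mn}$---inherited from holomorphicity of $g$ on all of $\Sp_1(\bC)$ rather than merely on $\SU(2)$---must be strong enough to ensure convergence of the series on an open neighbourhood of $\Sp_1(\bC)$ in $\GL_2(\bC)$. A related subtlety in the uniqueness argument is verifying that the monomials $(u^1_+)^p (u^2_+)^q (u^1_-)^r$ remain linearly independent as meromorphic functions on $\Sp_1(\bC)$, which is precisely where the restriction to reduced form becomes essential.
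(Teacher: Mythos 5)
Your proposal follows essentially the same route as the paper: expand $g|_{\Sp_1}$ (the compact real form) via Peter--Weyl in generalised spherical functions, observe that these matrix coefficients are polynomials in the entries $u^i_\pm$ and hence extend to $\GL_2(\bC)$, use the relation \eqref{det1} to pass to a reduced form, and conclude $h|_{\Sp_1(\bC)}=g$ by the identity principle for holomorphic functions agreeing on the totally real submanifold $\Sp_1\subset\Sp_1(\bC)$. The convergence issue you flag as the main obstacle is real, but the paper's own proof treats it with the same level of informality, so your plan matches the published argument in both substance and level of rigour.
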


\begin{proof} Since $\Sp_1$ is  a three dimensional,  totally real submanifold of 
the three dimensional complex manifold $\Sp_1(\bC)$, the  holomorphic map  
$\,g\colon \Sp_1(\bC) \rightarrow \bC\,$ is uniquely determined by its restriction 
$ g|_{\Sp_1}$, which is  of class $\cC^\infty$ and hence in $L^2(\Sp_1)$. 
This implies that  $ g|_{\Sp_1}$ admits a unique series expansion in terms of 
generalised spherical functions (see e.g.\  \cite{GMS} p. 94).
We recall that  these are the functions  
$$
T^\l_{mn}\colon \Sp_1 \rightarrow \bC\,,\quad \text{with}\ 
\l = \frac{\ell}{2}\ ,\ \ell \in \bN\ ,\ m , n = - \ell, -\ell + 1, \ldots, \ell - 1, \ell\,,
$$
which associate with every  $U  = (u^i_{\pm}) \in \Sp_1$  the $(m, n)$-element  
of the matrix  $T^\l(U)$  representing the action of  $U$ on   the  (unique, up to an isomorphism) 
irreducible $\Sp_1$-module of highest weight $\l$. Since every such irreducible 
$\Sp_1$-module is  a symmetric power  of  the standard module $\bC^2$, 
the entries  of  $T^\l(U)$ are   polynomials in  the entries of $U$. 
A  generalised spherical function $T^\l_{mn}$ is  therefore a  polynomial in these variables and has an expression of the  form
 \beq \label{spherical} 
 T^\l_{mn} =  \sum
     T^\l_{mn|pqrs} ( u^1_+)^{p} ( u^2_+)^{q} ( u^1_-)^{r} ( u^2_-)^{s} \big\vert_{\Sp_1}\ ,
\quad  T^\l_{mn|pqrs} \in \bC\ .
\eeq
Since the   restrictions $u^i_\pm|_{\Sp_1}$'s  are constrained by \eqref{det1}, the coefficients in the 
expansion \eqref{spherical}  are  in general not uniquely  determined by  
the spherical function $T^\l_{mn}$. 
However,  replacing one of the functions $u^i_\pm|_{\Sp_1}$  by a  rational expression of the others, 
one can always   reduce to an expression  
for  $T^\l_{mn}$ as 
a Laurent  series of the other three functions.  
Summing up,  a  spherical function $T^\l_{mn}\colon \Sp_1 \rightarrow \bC$
admits at most four  specific  expansions 
 \eqref{spherical}, 
each of them equal to   the  restriction $ \cT^\l_{mn}|_{\Sp_1}$ of  a
meromorphic function $\cT^\l_{mn}\colon \GL_2(\bC) \rightarrow \bC$  
as in  \eqref{mero} and 
 in reduced form. 
We therefore have  that    $ g|_{\Sp_1}$ can be expanded in a  series of the form
\beq \label{expansion} 
g\vert_{\Sp_1} = \sum_{\l, m, n} c_{\l}^{mn} \cT_{mn}^\l \big\vert_{\Sp_1} 
=  \left(\sum c_{pqrs}  
             (u^1_+)^{p} (u^2_+)^{q} ( u^1_-)^{r} (u^2_-)^{s}\right)\Big\vert_{\Sp_1}\ ,
\eeq
where the coefficients   $\, c_{pqrs}  = \sum c_{\l}^{mn} T_{mn|pqrs}^\l$  are completely 
determined by the coefficients  $c_{\l}^{mn}$ of the expansion of  $g|_{\Sp_1}$ in 
generalised spherical functions if  the  $T_{mn|pqrs}^\l$  are coefficients  of a 
reduced form of the maps  $\cT^\l_{mn}\colon \GL_2(\bC) \rightarrow \bC$.  

Consider now the  function $h =  \sum c_{pqrs}  
             (u^1_+)^{p} (u^2_+)^{q} ( u^1_-)^{r} (u^2_-)^{s}$ on $\GL_2(\bC)$. 
Being meromorphic, it is holomorphic  
on a dense open subset  $\cU\subset \Sp_1(\bC)$. Since  $h|_{\Sp_1} = g|_{\Sp_1}$, 
it  follows that  $h|_{\cU} = g|_{\cU} $, so that, by continuity,  $h|_{\Sp_1(\bC)} = g $, proving  the first claim of the lemma.
The second claim  follows from the construction of $h$ and the fact that $h$ is in reduced 
form if and only if  all  meromorphic  functions $\cT^\l_{mn}$ appearing 
in \eqref{expansion}  are taken in reduced form. 
\end{proof}

We now solve  certain equations for holomorphic  functions 
$f\colon \Sp_1(\bC) \rightarrow \bC$.

\begin{lemma} \label{genlem} 
 \begin{itemize} [itemsep=2pt, leftmargin=18pt]
\item[i)] 
Every solution of   
\beq \label{h++eq} 
H^o_{++} {\cdot} f = 0 
\eeq  
is a  restriction $f  = h|_{\Sp_1(\bC)}$  of a holomorphic  map  
$h\colon \GL_2(\bC) \rightarrow \bC$ of the form
\beq \label{gs1} 
h = \sum_{ n,m \ge 0}  c_{nm} (u^1_+)^{n} (u^2_+)^{m}.
\eeq
 
\item[ii)]  Every solution of    
\beq\label{h0eq} 
H^o_0 {\cdot} f = k f\  ,\ k \in \bZ\,,
\eeq 
is a  restriction $f  = h|_{\Sp_1(\bC)}$  of a meromorphic  map
$h\colon \GL_2(\bC) \rightarrow \bC$  of the form
\beq \label{gs2}  
h = \sum_{ \substack{ n,m,p,q  \,\in\, \bZ \\
                                      n+ m - p - q  \,=\, k}}
                  c_{nmpq} (u^1_+)^{n} (u^2_+)^{m} (u^1_-)^{p} (u^2_-)^{q}. 
\eeq
\end{itemize}
\end{lemma}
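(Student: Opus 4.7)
The plan is to use Lemma \ref{Sp1_restr} to lift each equation on $\Sp_1(\bC)$ to an identity on $\GL_2(\bC)$ via a reduced-form meromorphic extension of $f$, and then to read off the coefficients. Concretely, I would write $f = h|_{\Sp_1(\bC)}$ for a meromorphic $h$ of the form \eqref{mero}, chosen so that $u^2_-$ has been eliminated using \eqref{det1}:
\[
h = \sum_{p,q,r \in \bZ} c_{pqr}\, (u^1_+)^p (u^2_+)^q (u^1_-)^r .
\]
On the open subset $\Sp_1(\bC) \cap \{u^1_+ \neq 0\}$, the triple $(u^1_+, u^2_+, u^1_-)$ is an independent holomorphic coordinate system, so the coefficients $c_{pqr}$ are uniquely determined by $f$. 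This uniqueness will be the central lever: since both $H^o_0$ and $H^o_{++}$ are tangent to $\Sp_1(\bC)$ (a short check against $\det U - 1$), any identity involving $h$ and one of these operators that holds on $\Sp_1(\bC)$ must hold on all of $\GL_2(\bC)$ in the reduced form.

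For part (ii), since $H^o_0 = u^i_+ \partial_{u^i_+} - u^i_- \partial_{u^i_-}$, each monomial $(u^1_+)^p (u^2_+)^q (u^1_-)^r$ is an $H^o_0$-eigenfunction with eigenvalue $p+q-r$. The equation $H^o_0 \cdot f = k f$ therefore lifts to $H^o_0 \cdot h = k h$ on $\GL_2(\bC)$, giving $(p+q-r-k)\, c_{pqr} = 0$ for every triple. Thus
\[
h = \sum_{p+q-r=k} c_{pqr}\, (u^1_+)^p (u^2_+)^q (u^1_-)^r ,
\]
which is a special instance of \eqref{gs2} with the $u^2_-$ exponent set to zero.

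For part (i), using $H^o_{++} = u^1_+ \partial_{u^1_-} + u^2_+ \partial_{u^2_-}$ and the fact that the reduced $h$ is independent of $u^2_-$, the equation $H^o_{++} \cdot f = 0$ lifts to $u^1_+ \partial_{u^1_-} h = 0$, forcing $c_{pqr} = 0$ whenever $r \neq 0$. Hence $h = \sum_{p,q} c_{pq}\, (u^1_+)^p (u^2_+)^q$ depends only on the entries of the first column of $U$, so $f$ factors as $f = F \circ \pi$ through the first-column map $\pi : \Sp_1(\bC) \to \bC^2 \setminus \{0\}$, $U \mapsto (u^1_+, u^2_+)$, with $F$ holomorphic on $\bC^2 \setminus \{0\}$. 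Hartogs' extension theorem then extends $F$ to an entire function on $\bC^2$, whose Taylor expansion $F(x,y) = \sum_{n,m \geq 0} c_{nm}\, x^n y^m$ is precisely \eqref{gs1}.

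The main obstacle is to justify cleanly the lifting step: the uniqueness of the reduced-form extension must be invoked carefully so that equations valid on $\Sp_1(\bC)$ become identities on $\GL_2(\bC)$, rather than identities only modulo multiples of $\det U - 1$. Once that is in place, part (ii) is a direct eigenvalue decomposition, and part (i) needs only the additional standard input of Hartogs' theorem to exclude negative exponents at the isolated singularity $(0,0) \in \bC^2$.
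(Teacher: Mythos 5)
Your proposal is correct, and for the setup and for part (ii) it takes essentially the paper's route: write $f=h|_{\Sp_1(\bC)}$ with $h$ meromorphic in a reduced form (here, $u^2_-$ eliminated via $\det U=1$), check that $H^o_0$ and $H^o_{++}$ are tangent to $\Sp_1(\bC)$ and act in the coordinates $(u^1_+,u^2_+,u^1_-)$ as $u^1_+\partial_{u^1_+}+u^2_+\partial_{u^2_+}-u^1_-\partial_{u^1_-}$ and $u^1_+\partial_{u^1_-}$ respectively, and read off the constraints on the coefficients using the uniqueness of the reduced-form expansion (which is exactly what Lemma \ref{Sp1_restr} supplies); the paper's proof is this argument compressed into two sentences. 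Where you genuinely diverge is in part (i), at the point of excluding negative exponents: the paper only gestures at ``holomorphicity'', leaving implicit the verification that a series $\sum_{p,q\in\bZ}c_{pq}(u^1_+)^p(u^2_+)^q$ with a nonzero coefficient at a negative exponent cannot restrict to a function holomorphic at the points of $\Sp_1(\bC)$ where $u^1_+=0$ or $u^2_+=0$. Your replacement — $f$ is constant on the fibres of the first-column map $\pi\colon\Sp_1(\bC)\to\bC^2\setminus\{0\}$, these fibres being the connected orbits of $\exp(\bC H^o_{++})$, so $f=F\circ\pi$ with $F$ holomorphic on $\bC^2\setminus\{0\}$, and Hartogs' theorem extends $F$ across the origin — is cleaner and in fact self-contained: for part (i) it bypasses the reduced-form expansion and its uniqueness issue entirely, at the modest cost of invoking Hartogs. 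The Laurent-coefficient uniqueness that you rightly flag as the delicate step is then needed only for part (ii), where it reduces to the eigenvalue decomposition you describe, and is covered either by your observation that $(u^1_+,u^2_+,u^1_-)$ are independent coordinates on $\Sp_1(\bC)\cap\{u^1_+\neq 0\}$ or by the uniqueness statement already proved in Lemma \ref{Sp1_restr}.
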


\proof
A holomorphic function $f$ on $\Sp_1(\bC)$ is of  the form  $f  = h|_{\Sp_1(\bC)}\,$,  
for some meromorphic  $h$  in reduced  form (Lemma \ref{Sp1_restr}). 
The  coordinate expression  for $H^o_{++}$  \eqref{coord_vfs} and holomorphicity 
imply  that $f$ satisfies $\ H^o_{++} {\cdot} f = 0\ $ if and only if $h$ has the 
form \eqref{gs1}, proving  i).   A  similar argument proves  ii). 
\qed

\noindent
Simultaneous solutions of \eqref{h++eq} and \eqref{h0eq} may now be  
 constructed.  More generally:
\begin{lemma} \label{lemma_split}
Let   $g\colon \Sp_1(\bC) \rightarrow \bC$ be holomorphic. The system of equations for a holomorphic 
function $f\colon \Sp_1(\bC) \rightarrow \bC$, 
\beq\begin{split}
H^o_0 {\cdot} f &=   k f \,,\quad   k \in \bZ\,,\\
 H^o_{++} {\cdot} f  &=  g\,,
 \end{split}
\label{c}\eeq  
admits solutions if and only if  
$g$ satisfies the equation,
\beq \label{conditionbis}
H^o_{0}{\cdot} g =   (k+2)g\,. 
\eeq
 If \eqref{conditionbis}  holds, the set of solutions to \eqref{c} is as follows. 
\begin{itemize} [itemsep=2pt, leftmargin=18pt]
\item[a)]  For $k < 0$ there exists exactly one holomorphic solution.
\item[b)]  For  $k \geq 0$, the solutions are precisely all  the functions   
$f = h|_{\Sp_1(\bC)}$ determined  by holomorphic maps $h\colon \GL_2(\bC) \rightarrow \bC$ of the  form  
\beq \label{gs3} 
h = h_o + \sum_{\substack{ m,n \ge 0 \\ m+n = k }}   
                          c_{mn} (u^1_+)^{m} (u^2_+)^{n},
\eeq
where  $f_o = h_o|_{\Sp_1(\bC)}$ is   some solution of  \eqref{c}. 
\end{itemize}
\end{lemma}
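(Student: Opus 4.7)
For necessity of \eqref{conditionbis}, I would apply $H^o_0$ to the equation $H^o_{++}\cdot f = g$ and use the bracket $[H^o_0,H^o_{++}] = 2H^o_{++}$ from \eqref{flat_g}, obtaining
$H^o_0\cdot g = H^o_{++}\cdot(H^o_0\cdot f) + 2\,H^o_{++}\cdot f = (k+2)g$.
For uniqueness and the parametrisation by \eqref{gs3}, note that if $f,f'$ both solve \eqref{c} then $f-f'$ lies in the charge-$k$ part of $\ker H^o_{++}$; by Lemma \ref{genlem}(i), $f - f'$ is the restriction of some $h = \sum_{n,m\ge 0}c_{nm}(u^1_+)^n(u^2_+)^m$, and the charge condition $H^o_0(f-f') = k(f-f')$ forces $n+m = k$. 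For $k<0$ no such monomials exist, proving uniqueness; for $k \ge 0$ one obtains the $(k+1)$-dimensional affine family \eqref{gs3} once a particular solution $f_o$ is in hand.

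To produce such an $f_o$ when \eqref{conditionbis} holds, my plan is to expand $g|_{\Sp_1}$ in the generalised spherical functions $T^\lambda_{mn}$ supplied by Lemma \ref{Sp1_restr}. The compatibility condition restricts this expansion to those matrix coefficients of right-charge $k+2$. In each irreducible $\mathfrak{sp}_1(\bC)$-summand of the right-regular representation on the space of holomorphic functions on $\Sp_1(\bC)$, the raising operator $H^o_{++}$ is an isomorphism between the one-dimensional weight spaces of charges $k$ and $k+2$ whenever both are non-zero, and the preimage of each spherical basis element under $H^o_{++}$ is provided by the standard $\mathfrak{sl}_2$-inversion formulas. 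Summing these preimages termwise against the coefficients from the spherical expansion of $g$ yields the candidate $f_o$; for $k\ge 0$, adding the $(k+1)$-dimensional kernel from paragraph~1 then produces all solutions of \eqref{c}.

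The main technical step I anticipate is verifying that this termwise preimage assembles into a genuinely holomorphic function on $\Sp_1(\bC)$, and not merely a formal series. This requires balancing the super-polynomial decay of the spherical coefficients of $g$ (inherited from its holomorphy and the estimates underlying Lemma \ref{Sp1_restr}) against any growth of the inversion constants on each irreducible summand as $\lambda \to \infty$. Once absolute convergence of the preimage series is established, holomorphy of $f_o$ follows automatically, and the full lemma is obtained by combining existence with the uniqueness and parametrisation argument of paragraph~1.
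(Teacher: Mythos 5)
Your necessity computation and your uniqueness/parametrisation step (reducing the difference of two solutions to Lemma \ref{genlem}(i) and counting charge-$k$ monomials in $u^1_+,u^2_+$) are correct and coincide with the paper's. The existence step is where you diverge from the paper — which instead writes $g$ as the restriction of a Laurent-type series on $\GL_2(\bC)$ via Lemma \ref{genlem}(ii), exhibits an explicit termwise antiderivative, and argues that its singularities along $\{u^1_+u^2_+=0\}$ are removable — and it is where your argument has a genuine gap, hidden in the clause ``whenever both are non-zero''. In each irreducible summand of the right-regular representation (highest weight $\ell$), $H^o_{++}$ raises the column-weight by $2$ and its image misses exactly the lowest weight space. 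So if the charge-$(k{+}2)$ function $g$ has a non-zero component along the coefficients $T^\lambda_{m,-\ell}$ with $\ell=-(k{+}2)$ — possible for every $k\le -2$ — that component has no preimage under $H^o_{++}$, and your termwise inversion produces nothing there. This is not a fillable technicality: for $k=-2$ and $g\equiv 1$, writing $H^o_{++}=A+iB$ with $A,B\in\su_2$ and using bi-invariance of Haar measure gives $\int_{\SU_2}(H^o_{++}\!\cdot f)\,dU=0$ for every holomorphic $f$, whereas $\int_{\SU_2}1\,dU\neq 0$; so \eqref{c} has no solution even though \eqref{conditionbis} holds. Your construction therefore establishes existence only when the obstructed summand is absent, i.e.\ for $k\ge -1$ or for $g=0$ (which, as it happens, are the only cases in which the lemma is invoked in the paper, e.g.\ in Lemma \ref{ic_lem} and in the vanishing arguments for negatively charged components); it does not prove the statement for all $k\in\bZ$, and indeed your own weight-space analysis shows that no argument can.

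Two smaller points. The convergence issue you defer to the end is actually the easy part of your approach: the inversion constants on each summand are reciprocals of integers $j$ with $1\le j\le \ell$ (from $H^o_{++}$ acting on the weight basis of $\mathrm{Sym}^\ell\bC^2$), hence uniformly bounded, so absolute convergence of the preimage series follows at once from that of the spherical expansion of $g|_{\Sp_1}$; you should say this rather than leave it as an anticipated difficulty. Conversely, the lowest-weight obstruction above is the part that genuinely needs attention — either restrict to $k\ge -1$, or add the hypothesis that the components of $g$ along the lowest-weight matrix coefficients vanish — and your representation-theoretic route has the merit of making that obstruction visible, which the paper's explicit-antiderivative argument does not.
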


\begin{proof}  If a solution  $f$ to \eqref{c} exists,  then 
$$
H^o_{0}{\cdot} g \ =\ [H^o_0, H^o_{++}]{\,\cdot} f + H^o_{++}{\cdot\,}(H^o_0 {\cdot} f)
                              \ =\  2 H^o_{++}{\cdot} f +  k H^o_{++}{\cdot} f \ =\   (k+2)g\ ,
$$
so  \eqref{conditionbis} is a necessary  condition for the integrability of the system \eqref{c}. 
On the other hand, by Lemma \ref{genlem} ii)  we have that 
$$ 
g =\wt g|_{\Sp_1(\bC)}\ ,\quad
\wt g \ =  \sum_{ p + q  - r - s \, =\, k+2} 
              c_{pqrs} (u^1_+)^{p} (u^2_+)^{q} (u^1_-)^{r} (u^2_-)^{s}.
$$
Now,   by integration of $H^o_{++} {\cdot} h_o  =\wt g$  we obtain the series,
$$ 
h_o = \sum_{ p + q  - r - s \,=\, k + 2} c_{pqrs}\  
                (u^1_+)^{p - 1} (u^2_+)^{q-1 } (u^1_-)^{r } (u^2_-)^{s}  
          \left(  \frac{ u^2_+ u^1_-  }{r+1}  +   \frac{ u^1_+ u^2_- }{s+1} \right).
$$
This converges  uniformly  to  a holomorphic solution  of  \eqref{gs3}  on relatively  compact 
neighbourhoods of  the  points of  $\Sp_1(\bC) {\setminus} \mathcal Y$, where
$ \mathcal Y := \{ (u^i_\pm)\in \Sp_1(\bC)\, \vert  \,  u^1_+  u^2_+  \,{=}\,  0  \}$.
Moreover, since there is no element of $\Sp_1(\bC)$,  on which $u^1_+$ and $u^2_+$ 
are both zero,  for  any $U = (u^i_{\pm}) \in \mathcal Y$ we may replace  the meromorphic 
functions $\wt g$ and  $h_o$ by  equivalent   functions  $\wt g'\,,\, h_o'$  in reduced form, 
both  independent  of either  $u^1_+$ or  $u^2_+$ and  hence  with no singularity  
at the chosen $U \in \mathcal Y$.  This means that the  functions $\wt g|_{\Sp_1(\bC)}$ and $h_o|_{\Sp_1(\bC)}$ are holomorphic on $\Sp_1(\bC) \setminus \mathcal Y$ and 
extendable to  all points of $\mathcal Y$, i.e. $\mathcal Y$ is a  set of removable singularities for them. Thus,  $f_o = h_o|_{\Sp_1(\bC)}$  is  a  solution of  \eqref{gs3} which is  holomorphic 
everywhere on $\Sp_1(\bC)$.
To complete the proof it suffices to observe that if  both  
$f_o\,, f\colon  \Sp_1(\bC) \rightarrow \bC$  satisfy  \eqref{gs3}, their difference 
$\d f = f - f_o$ satisfies  \eqref{h++eq} and  \eqref{h0eq}. Therefore it is equal to
$\d f = \d h|_{\Sp_1(\bC)}$ for some  $\d h\colon \GL_2(\bC) \rightarrow \bC$ of the form  
$\d h =  \sum_{\substack{ m,n \ge 0 \\  m+n = k }}   
                          c_{mn} (u^1_+)^{m} (u^2_+)^{n}$.  From this,  (a) and (b) follow immediately.
\end{proof}
 
We now consider an initial value problem for an important  generalisation of the 
system \eqref{c} to  harmonic space 
$\cH|_{\cV} = \Sp_1(\bC) {\times} \{I_{2n}\} {\times} \cV$, 
$\cV \subset \bC^{4n}$. In what follows,   we represent  the elements  $(U, I_{2n}, z) \in \cH|_{\cV}$  
simply  as  $(U, z)$.  

\begin{lem} \label{charge2_lem} 
Let   $\,\cV \subset \bC^{4n} $ be a simply connected 
open neighbourhood of  $\ 0 \in \bC^{4n}$. 
The   system of differential equations on  
$\cH|_{\cV}  := \Sp_1(\bC) \times \cV$
for holomorphic maps  
$\, k = (k^{ic})\colon   \cH|_{\cV}  \rightarrow  \bC^{4n} $,
$ i=1,2,\ a, b, c=1,\dots,2n$,
\beq 
\label{frob1} 
H_0^o \cdot k = 0 \ ,\quad 
H^o_{++} \cdot k\vert_{(U, z)} = F(U,k(U,z)),
\eeq
where
$
F = (F^{ia})\colon  \Sp_1(\bC) \times \bC^{4n}  
                                                          \rightarrow  \bC^{4n}
$
is holomorphic and satisfies  the integrability condition
\beq \label{charge2} 
H_0^o {\cdot} F = 2 F\,,
\eeq
admits
\begin{itemize} [itemsep=6pt, leftmargin=18pt]
\item[A)]  {\em local} solutions around any point   $(u^i_{o\pm}, z^{ia}_o) \in \cH|_{\cV}\,$
 with  arbitrary  initial conditions  $  k^{jd} (u^i_{o\pm}, z^{ia}_o) = c^{jd} $ 
and 
\item[B)]  a {\rm global} solution $k$  on  $\cH|_{\cV} $ for any choice of   initial values 
$k|_{(I_2, z)}  = \wh k(z)$, $z \in \cV$, having   the property 
\beq 
k (U, z) =    k(\psi(U), z)\ ,\qquad \psi(U) := (U^T)^{-1}.
\label{reality_cond1}
\eeq 
\end{itemize}
 \end{lem}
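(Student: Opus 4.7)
My plan is to handle Part (A) by standard holomorphic ODE theory and Part (B) by Picard iteration based on Lemma \ref{lemma_split}, with a separate argument for the reality property.

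For Part (A), since $H_{++}^o$ is a nowhere-vanishing holomorphic vector field on $\Sp_1(\bC)$, a flow-box chart around $(u^i_{o\pm}, z^{ia}_o) \in \cH|_\cV$ straightens $H_{++}^o$ to $\partial/\partial t$, so that the equation $H_{++}^o \cdot k = F(U, k)$ becomes a nonautonomous holomorphic ODE in one complex parameter. Picard--Lindel\"of yields a unique local holomorphic solution taking the constant value $c$ on the transverse slice $\{t = 0\}$. The auxiliary constraint $H_0^o \cdot k = 0$ is preserved along the $H_{++}^o$-flow thanks to the identity
\begin{equation*}
H_{++}^o \cdot (H_0^o \cdot k) = H_0^o \cdot (H_{++}^o \cdot k) - [H_0^o, H_{++}^o] \cdot k = H_0^o \cdot F - 2 H_{++}^o \cdot k = 0,
\end{equation*}
in which the hypothesis $H_0^o \cdot F = 2F$ is used; since $H_0^o \cdot k$ vanishes on the initial slice (where $k$ is constant), it vanishes throughout the local domain.

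For Part (B), I would define an iteration operator $T$ sending a holomorphic map $k \colon \cH|_\cV \to \bC^{4n}$ with $H_0^o \cdot k = 0$ to the unique holomorphic $T(k)$ obtained by applying Lemma \ref{lemma_split} parametrically in $z \in \cV$ to the scalar-valued system
\begin{equation*}
H_0^o \cdot T(k) = 0, \quad H_{++}^o \cdot T(k)\big|_U = F(U, k(U, z)), \quad T(k)\big|_{(I_2, z)} = \wh k(z).
\end{equation*}
The integrability hypothesis of Lemma \ref{lemma_split}, namely $H_0^o \cdot \big(F(U, k(U, z))\big) = 2 F(U, k(U, z))$, is seen to hold by the chain rule together with $H_0^o \cdot F = 2 F$ and $H_0^o \cdot k = 0$; uniqueness of $T(k)$ with the prescribed value at $U = I_2$ is case (b) of Lemma \ref{lemma_split} at $k = 0$, as the joint kernel of $H_0^o$ and $H_{++}^o$ consists of constants. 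Starting from $k_0(U, z) := \wh k(z)$ and iterating $k_{n+1} := T(k_n)$, a standard holomorphic majorant/contraction estimate on a possibly shrunk simply connected $\cV$ yields uniform convergence on compacta to a fixed point $k^\infty = T(k^\infty)$ which solves \eqref{frob1}.

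For the reality condition $k^\infty(U, z) = k^\infty(\psi(U), z)$, set $\tilde k(U, z) := k^\infty(\psi(U), z)$. Using $\psi(I_2) = I_2$ the boundary values agree on $\{I_2\} \times \cV$, and using $\psi_*(H_0^o) = -H_0^o$ one obtains $H_0^o \cdot \tilde k = 0$. The strategy is to arrange the Picard iteration so that each iterate is $\psi$-invariant; uniqueness of the fixed point then forces $\tilde k = k^\infty$. The main obstacle lies precisely in this reality step: because $\psi_*(H_{++}^o) = -H_{--}^o$, the $\psi$-pushforward of the $H_{++}^o$-equation is an $H_{--}^o$-equation and does not close the original system. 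The proof must therefore either symmetrise each iterate under $\psi$, or equivalently extend the system with the reality-induced ansatz $H_{--}^o \cdot k\big|_U = -F(\psi(U), k)$ and apply the holomorphic Frobenius theorem on the simply connected manifold $\cH|_\cV$ after verifying the requisite $\gsl_2(\bC)$-integrability. The holomorphic majorant estimate that drives the Picard iteration is, by comparison, rather routine.
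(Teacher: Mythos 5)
Your Part A can be repaired but is not correct as written. The paper proves local existence by applying the complex Frobenius theorem to the involutive rank-two distribution spanned by the lifts of $H^o_0$ and $H^o_{++}$ to $\cH|_{\cV}\times\bC^{4n}$, with the initial-data transversal $\cT$ built along the $\exp(\z H^o_{--})$-direction. In your flow-box variant, the displayed identity silently replaces $H_0^o\cdot\big(F(U,k(U,z))\big)$ by $(H_0^o\cdot F)(U,k)=2F$, which already assumes $H_0^o\cdot k=0$ — the very constraint you are trying to propagate; the correct computation gives $H^o_{++}\cdot(H^o_0\cdot k)=\big(\tfrac{\partial F}{\partial w}\big)\cdot(H^o_0\cdot k)$, a linear homogeneous equation along the flow, which is what you actually need. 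More seriously, the claim that $H^o_0\cdot k$ vanishes on the initial slice ``where $k$ is constant'' fails for a generic slice transverse to $H^o_{++}$: $H^o_0$ is not tangent to such a hypersurface, so on it $H^o_0\cdot k$ acquires a component proportional to $H^o_{++}\cdot k=F(U,c)$, which need not vanish. You must choose the transversal to be tangent to the involutive span of $H^o_0$ and $H^o_{--}$ (times a neighbourhood in $\cV$) — which is in effect what the paper's choice of $\cT$ accomplishes.

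The genuine gap is in Part B. Your strategy — use Lemma \ref{lemma_split} as a global linear solver on $\Sp_1(\bC)$, parametrically in $z$, and run a Picard iteration — has no small parameter to make $T$ a contraction: the ``time'' variable of the underlying ODE is the $H^o_{++}$-flow parameter, which ranges over all of $\bC$ inside each $\cB$-orbit, and shrinking $\cV$ only shrinks the $z$-domain, on which $T$ acts trivially, not the Lipschitz constant of $k\mapsto T(k)$. The ``standard holomorphic majorant/contraction estimate'' you invoke is precisely what is unavailable for a global-in-time nonlinear problem, so the existence of the fixed point is not established. The paper proceeds quite differently: it globalizes the local (Frobenius) solutions using the geometry of $\Sp_1(\bC)/\cB\simeq\bC P^1$ — solutions propagate along entire $\cB$-orbits because they are constant on the $H^o_0$-orbits that carry the monodromy, yielding a solution on the patch $(\wt\bC_{(0)}\times\cV)\cdot\cB$; then the initial data on $\wt\bC_{(0)}\times\cV$ are chosen so that the solution satisfies the non-local equation $H^o_{--}\cdot k=-F(\psi(U),k(\psi(U),z))$, which forces \eqref{reality_cond1}, and \eqref{reality_cond1} is in turn what allows $k$ to be extended holomorphically across the second patch $(\wt\bC_{(\infty)}\times\cV)\cdot\cB$. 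You correctly identify the non-local extension of the system as the key difficulty, but you defer it; in the paper it is not an optional symmetrisation but the mechanism that produces the global solution, so without it neither global existence nor the reality property is proved.
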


\begin{rem} The latter property  is merely a tool in our  proof of the existence of  
global solutions. It is by no means  true  that  every global solution  on $\cH|_{\cV}$ has this property. 
 \end{rem}
 
\noindent 
\textit{Proof of A)}\,. 
 The existence  of  a solution  $k$ of \eqref{frob1}  is equivalent to the 
existence  of a certain special  submanifold $S^{(k)}$ corresponding  to  the graph of $k$ 
in  the cartesian product $\cN :=\cH|_{\cV} \times \bC^{4n} $.  
Denote the  coordinates  of $ \cN$ by $(u^i_\pm, z^{ia},  w^{jd})$ and  the
standard projections  onto its factors by   $\pi_1\colon  \cN  \rightarrow \cH|_{\cV}\,$
and $\pi_2\colon  \cN \rightarrow \bC^{4n}$.
Let  $\wh F\colon  \cN \ra  \bC^{4n}$  be the   map 
$\ \wh F(u^i_\pm, z^{ia},  w^{jd}) := F(u^i_{\pm},  w^{jd})$
and  $\wh H_0$ and $\wh H_{++}$ the  vector fields on $ \cN$  
uniquely determined   at $x \in  \cN$ by the conditions,
\begin{align*} 
\pi_{1*}(\wh H_0|_x)  &=   H^o_0\vert_{\pi_1(x)}\ , &
\pi_{1*}(\wh H_{++}|_x)  &=   H^o_{++}|_{\pi_1(x)}\ ,   \\[6pt]
 \pi_{2*}( \wh H_0\vert_x)  & =   0\ , &
\pi_{2*}(\wh H_{++}|_x)  
                          &= \wh F^{ia} (x)  \frac{\partial}{\partial w^{ia}}\Big|_{\pi_2(x)} .
\end{align*}
Further, let $\cD \subset T \cN$  be the complex distribution  generated by   
$\wh H_0$ and $\wh H_{++}\,$.  We immediately  see that  a map  
$k \colon \cW \subset \cH|_{\cV} \rightarrow   \bC^{4n} $
is a  solution of \eqref{frob1} if and only if the vector fields of $\cD$ are everywhere tangent 
to the graph  $S^{(k)} \subset  \cN$ given by
$$
S^{(k)}:= 
\{ (u^i_{\pm}, z^{ia},  w^{jd})\ \vert \   (u^i_{\pm}, z^{ia}) \in \cW\ , 
\  w^{jd}   = k^{jd}(u^i_{\pm}, z^{ia})  \}.
$$
We have:
$$  [\wh H_0, \wh H_{++}]  
= 2 \wh H_{++} +  (\wh H_{0} {\cdot} \wh F^{ia}   
 - 2  \wh F^{ia}) \frac{\partial}{\partial w^{ia}} \  .$$
From \eqref{charge2}  $ \wh H_{0} {\cdot} \wh F^{ia}   - 2  \wh F^{ia}\, {=}\, 0 $, so
 $\cD$ is involutive. 
Let $x_o \, {=}\, (u^i_{o\pm} , z^{ia}_o,  c^{jd}) \in  \cN$ and choose  
a disk $\D_\varepsilon \subset \bC$ of radius $\varepsilon$ and centre 0, 
a neighbourhood $\cV' \subset \cV$ of $(z^{ia}_o)$ 
and  a  holomorphic map (possibly constant) 
$\wh k\colon\D_\varepsilon \times \cV' \to \bC^n$ 
such that   $\wh k^{jd}(0, z^{ia}_o) = c^{jd}$. 
If $\varepsilon$ is sufficiently small, the set
$$
\cT :=  \big\{ 
           \big(\, u^i_{\pm}= \exp(\z H^o_{--}) {\cdot} u^i_{o\pm}\ ,\  z^{ia}\ ,\  
                         w^{jb}= \wh k^{jd} (\z, z^{ia})\, \big) \ 
                         \text{with} \     \z \in \D_\varepsilon\ , z^{ia} \in \cV'  
         \big\}
$$
is a  $(4n {+} 1)$-dimensional $\cD$-transversal complex submanifold of $\cN$.
By  the complex Frobenius Theorem,  there exists a  family of  two-dimensional 
integral leaves of $\cD$, each  passing through a distinct  point of $\cT$, 
which combine to form a complex manifold  of dimension $4n {+}3$ 
with the property that the vector fields in  $\cD$ are everywhere tangent to it. 
This  submanifold is   the graph $S^{(k)}$ of a map $k$
in a neighbourhood of  $(u^i_{o\pm} , z^{ia}_o)$  such that  
 $  k^{jd}(u^i_{o\pm}, z^{ia}_o) = c^{jd}$.  
This is one  of the required local solutions.
 \vskip 6pt plus 2pt minus 4pt

\noindent
\textit{Proof of B)}\,. 
 We now turn to the existence of global solutions. 
We recall that 
the standard transitive action of $\Sp_1(\bC)$ on $\bC P^1$ yields
a natural identification $\bC P^1 \simeq \Sp_1(\bC)/\cB$, where  $\cB$
is the Borel subgroup   formed by upper triangular matrices in $ \Sp_1(\bC)$,
$$
\cB := \left\{\ \begin{pmatrix}\l & \mu \\ 0 & \l^{-1}\end{pmatrix} \ ,\ 
                           \l \in \bC^*\ , \mu \in \bC\ \right\} \simeq \bC^* \times \bC .
$$
The affine subspaces of $\bC P^1$  
$$
\bC_{(0)} = \{ [1: \z] \,;\,  \z \in \bC \}\quad  \text{and}\quad 
\bC_{(\infty)} =  \{ [\z: 1]\,;\,  \z \in \bC \},
$$
can be identified with the    cosets in $\Sp_1(\bC)/\cB$ given by the points of 
\begin{align*}
\wt \bC_{(0)} 
&:= \left\{\ \begin{pmatrix}1 & 0\\ \z & 1\end{pmatrix}\ ,\ \z \in \bC\ \right\} 
     = \exp(\bC H^o_{--}) \\
\intertext{and}
 \wt \bC_{(\infty)} 
 &:= \left\{\ \begin{pmatrix} \z & -1\\ 1 & 0\end{pmatrix}\ ,\ \z \in \bC\ \right\} 
      = J_o {\cdot} \exp(\bC H^o_{--})\,,\quad  
                      J_o = \begin{pmatrix}0 & -1\\ 1 & 0 \end{pmatrix},
\end{align*}
respectively. This means  that $\cH|_{\cV} $ is the union of the two patches,
 \beq \label{coset} 
\cH|_{\cV} =  \Sp_1(\bC) \times \cV 
 = (\wt \bC_{(0)} \times \cV) {\cdot} \cB \cup (\wt \bC_{(\infty)} \times \cV) {\cdot} \cB\,,
\eeq
with their intersection (a tube over an annulus) having two equivalent descriptions, 
 \begin{align}
 (\wt \bC_{(0)} \times \cV) {\cdot} \cB \cap (\wt \bC_{(\infty)} \times \cV) {\cdot} \cB  
&=  (\exp(\bC^* H^o_{--}) \times \cV) {\cdot} \cB    
\nonumber \\
&=   (J_o {\cdot} \exp(\bC^* H^o_{--}) \times \cV) {\cdot} \cB\,.
 \label{intersection}  
 \end{align}
Now, every $\cB$-orbit $x {\cdot} \cB\,,\, x \in \cH|_\cV$, is  biholomorphic to 
$B \simeq \bC^* \times \bC$ and  the non-trivial elements of its fundamental group  
$\pi_1(x {\cdot} \cB)$  are given by  the  $H^o_0$-orbits in $x {\cdot} \cB$.  
A local solution of \eqref{frob1} is constant along any open subset of an   
$H^o_0$-orbit. 
 Using this and the existence of local solutions around every point, we  see that 
 if there exists a solution $k$  on  a given simply connected open subset  
 $\cS \subset \cH|_{\cV}$, it can always be extended to a solution  defined on  
 the  union of $\cB$-orbits 
$\cS {\cdot} \cB := \bigcup_{y \in \cS}  y {\cdot} B\,$.

Now consider a  simply connected  subset $\cZ \subset  \cH|_{\cV}$ transversal to the $B$-orbits.  
It may be covered by a collection of open sets  $\cW_x \subset  \cH|_{\cV}$, $x \in \cZ$, 
each admitting, by part A, a local solution  with arbitrary initial data on $\cW_x \cap \cZ$.  
The initial conditions can be chosen so that the solutions  agree on 
non-empty intersections $\cW_x \cap \cW_{x'}$, $x' \neq x$. 
By the simple connectedness of $\cZ$  these solutions  combine to give  a  solution  
on a neighbourhood of  $\cZ$ for any choice of initial data  $\wt k$ on $\cZ$.  
Such a solution uniquely extends to  $\cZ {\cdot} \cB$.

Since $\wt \bC_{(0)} \times \cV$  is simply connected and transversal to $\cB$-orbits   
it follows that   for any choice of  data on 
$ \wt \bC_{(0)} \times \cV$
$$ 
\wt k\colon \wt \bC_{(0)} \times \cV = \exp(\bC H^o_{--}) \times \cV\longrightarrow \bC^{4n} 
$$  
there is  a unique solution   $k$  on   the collection of $B$-orbits
$ (\wt \bC_{(0)} \times \cV) {\cdot} \cB$   
with   $k \vert_{\wt \bC_{(0)} \times \cV} =   \wt k$. 
We now prove  that  the  solution $k$ satisfies $k(U, z) = k(\psi(U),z)$ and 
$k(I_2,z) = \wh k(z)$, provided that   $\wt k$  is chosen appropriately. 

Let $K\colon B\times \cV \to \bC^{4n}$ be the unique holomorphic function satisfying   
\eqref{frob1} at the points of  the $B$-orbits  $(I_2, z) {\cdot} B$, $z \in \cV$,  with  initial data 
$K(I_2, z) = \wh k(z)$. Then set $\wt k$ to be the unique  holomorphic function 
at the points   $(\exp(\z H_{--}), z)$ such that $\wt k(I_2, z) = \wh k(z)$ and  
$$
H^o_{--}{\cdot}\wt k(\exp(\z H_{--}), z) 
         = - F(\psi(\exp(\z H^o_{--})), K(\psi(\exp(\z H^o_{--})), z))\,.
$$

Now consider the modified differential problem on maps $h\colon \cH|_\cV \,{\to}\,\bC^{4n}$
\beq
\begin{split}  
H^o_0 {\cdot} h|_{(U, z)} &= 0\\
 H^o_{++} {\cdot} h|_{(U, z)} &=  F(U\,,\,h(U,z))\\
 H^o_{--} {\cdot} h|_{(U, z)} &= - F( \psi(U)\,,\,h(\psi(U),z))\,.
 \end{split}
 \label{frob1-mod}
\eeq
Note that  \eqref{frob1-mod} is simply  \eqref{frob1}  with  the addition of a third equation,  which  is non-local; the right hand side 
depends on the value of $h$ at the shifted point $(\psi(U), z)$. 
However, every solution $h$ of \eqref{frob1} satisfying   $h(U, z) = h(\psi(U), z)$, 
necessarily  satisfies   the  third equation of \eqref{frob1-mod} also. Indeed, since
$\psi_*(H^o_0) = - H^o_0$,  $\psi_*(H^o_{\pm\pm}) =  -  H^o_{\mp\mp}$
(see  \eqref{bistecca}), we have:
\begin{align*} 
H^o_{--} {\cdot} h\big|_{(U, z)} 
&= H^o_{--} {\cdot} h(\psi(\cdot), \cdot))\big|_{(U, z)} 
=  (\psi_*(H^o_{--}) {\cdot} h)\big|_{(\psi(U), z)} \\
&  =  - (H^o_{++}{\cdot} h)\big|_{(\psi(U), z)} 
= - F(\psi(U)\,,\,h(\psi(U),z))\ .
 \end{align*}
The  solution  $k$  of \eqref{frob1},  which we constructed on  
$ (\wt \bC_{(0)} \times \cV) {\cdot} \cB$   
with initial data  $k|_{\wt \bC_{(0)} \times \cV} =   \wt k$, clearly   solves  the first two equations of \eqref{frob1-mod} and  by the above choice of $\wt k$, it also satisfies  the third equation   at  the points $(\wh U, \wh z) \in \wt \bC_{(0)} \times \cV$.  Further, on the points   $(U^{(\l)}, z) = (\wh U, \wh z) {\cdot} \exp(\l H^o_{++})$, $\l \in \bC$,   of their $H^o_{++}$-orbits  we have 
\begin{align*}
H_{--}^o  {\cdot} k|_{(U^{(\l)}, z)} \!\!
& = H_{--}^o  {\cdot} k|_{(\wh U, \wh z)}  
        + \int_0^\l H^o_{++}  {\cdot} H_{--}^o  {\cdot} k|_{(U^{(\mu)}, \wh z) }  \,d\mu \\
& = H_{--}^o  {\cdot} k|_{(\wh U, \wh z)}  
          + \int_0^\l \left(H^o_0 + H^o_{--}  {\cdot} H_{++}^o \right) {\cdot} k|_{(U^{(\mu)}, \wh z)}  \,d\mu 
\\
& =  - F(\psi(\wh U), k(\psi(\wh U), \wh z)) 
             \,{+} \int_0^\l H^o_{--}  {\cdot} F(\cdot, k(\cdot, \cdot)) |_{(U^{(\mu)}, \wh z) } \, d\mu 
\\
&  =  {-} F(\psi(\wh U), k(\psi(\wh U), \wh z)) 
         \,{-} \int_0^\l\!\! 
         H^o_{++}  {\cdot} F(\psi(\cdot), k(\psi(\cdot), \cdot)) |_{(\psi(U^{(\mu)}), \wh z) } d\mu 
\\
&  = -  F(\psi(U^{(\l)}), k(\psi(U^{(\l)}), z))\ . 
\end{align*}
Thus $k$ solves the third equation in \eqref{frob1-mod} at   points of 
$(\wt \bC_{(0)} \times \cV) {\cdot} \exp(\bC H^o_{++})$ as well. 
A  similar argument shows that it solves the third  equation also at the   points  of the  $H^o_0$-orbits in 
$$
((\wt \bC_{(0)} \times \cV) {\cdot} \exp(\bC H^o_{++})) {\cdot} \exp(\bC H^o_0) 
    = ( \wt \bC_{(0)} \times \cV)\cdot \cB\,.
$$
So, $k$ solves \eqref{frob1-mod}   at {\it all} points of its domain. 

Now, the  new map  $k'(U,z) := k(\psi(U), z)$ satisfies
\begin{align*}
& H_0^o {\cdot}  k' |_{(U, z)}  
= -   (H^o_0 {\cdot} k)|_{(\psi(U), z)} = 0 
\\
& H^o_{++}{\cdot} k'  |_{(U, z)}=  \psi_*(H^o_{++}){\cdot}  k|_{\psi(U, z)} 
    = - H^o_{--}{\cdot}   k|_{(\psi(U), z)} = F(U, k(U, z)) 
\\
& H^o_{--}  {\cdot}  k'|_{(U, z)} = \psi_*(H^o_{--}) {\cdot}  k|_{(\psi(U), z)}   
           = -  H^o_{++} {\cdot} k |_{ (\psi(U), z)} = - F (\psi(U)\,,\, k(\psi(U), z)) . 
\end{align*}
So,  $k$ and   $k'$ are both solutions of the system
\beq
\begin{split}  
\label{frob1-final}
H^o_0 {\cdot} h|_{(U, z)} &= 0\\
 H^o_{++} {\cdot} h|_{(U, z)} &=  F(U\,,\,k(U,z))\\
 H^o_{--} {\cdot} h|_{(U, z)} &= - F( \psi(U)\,,\,k(\psi(U),z))
 \end{split}
\eeq
 with identical initial data
$k'|_{\{I_2\} \times \cV} = \wh k = k|_{\{I_2\} \times \cV}$. We thus have  $k' \equiv k$ by  
the uniqueness of  local solutions  of \eqref{frob1-final} and  the connectedness of the domain 
$( \wt \bC_{(0)} \times \cV){\cdot} \cB$. This concludes the proof that  $k(U, z) = k(\psi(U), z)$.

We now show that the solution $k$ extends holomorphically to a solution defined 
on all of $\cH|_{\cV}$. Since  $ k(U,z) = k (\psi(U), z)$,   we have
$$
k|_{\left(\left(\smallmatrix \z & \z -1\\ 1 & 1\endsmallmatrix \right) , z\right)} 
 =  k|_{\left(\left(\smallmatrix 
                                              1  & - 1 \\  1-  \z & \z 
                                 \endsmallmatrix \right),   z\right)}
\qquad  \text{for}\  \z \in \bC^*\ . $$
Since  $k$ 
 is holomorphic  on  $(\wt \bC_{(0)} \times \cV) {\cdot} \cB$ 
 and
$\left(\left( \smallmatrix 1& - 1\\   1 -  \z & \z \endsmallmatrix \right),    z\right)
=  \left(\left(\smallmatrix 1 & 0\\ 1- \z & 1 \endsmallmatrix \right) 
        \left(\smallmatrix  1& -1\\ 0 &1 \endsmallmatrix \right),    z\right)$ 
belongs to  $(\wt \bC_{(0)} \times \cV) {\cdot} \cB$  for any  $\z \in \bC$ 
(including $\z = 0$),  it follows that   for every $z \in \cV$  the   map on $\bC^*$
$$
\z  \longmapsto  
k \vert_{\left(\left(\smallmatrix \z  & \z -1\\ 1 & 1\endsmallmatrix \right) , z\right)} 
$$
admits a holomorphic extension  to   $\z = 0$.  
Now, the $\cB$-orbits of the  points  
$\left(\smallmatrix \z & \z -1\\ 1 & 1\endsmallmatrix \right) 
= \left(\smallmatrix \z &  -1\\ 1 & 0\endsmallmatrix \right)  
                   \left(\smallmatrix 1 &  1\\ 0 & 1\endsmallmatrix \right)$ 
generate the entire set 
$$ 
(\wt \bC_{(\infty)} \times \cV) {\cdot}\cB 
= \bigcup_{\z \in \bC, z \in \cV}
          \left(\left(\smallmatrix \z &  -1\\ 1 & 0\endsmallmatrix \right) 
          \left(\smallmatrix 1 &  1\\ 0 & 1\endsmallmatrix \right), z\right) {\cdot} \cB\ .
$$
So we may take   solutions of \eqref{frob1} along  $\cB$-orbits  having  
$ k \big|_{\left(\left(\smallmatrix \z & \z -1\\ 1 & 1\endsmallmatrix \right) , z\right)}$ 
as initial values and combine them into a  holomorphic extension of    $k$  to 
$(\wt \bC_{(\infty)} {\times} \cV) {\cdot} \cB $. 
In virtue  of   \eqref{coset} and  \eqref{intersection},  $k$ extends  to  
$\cH|_{\cV} $ and, by continuity, it  satisfies 
 \eqref{reality_cond1}  everywhere. 
\qed

 \begin{rem}\label{remark_initial_data} 
 Given a global solution  $k = (k^{ic} )\colon\cH\vert_\cV \to \bC^{4n}$ of  \eqref{frob1}, we  set
 $$k^{\pm c}(U,z):=  - u^{\pm}_i k^{ic}(U,z)\ ,\qquad\ U = (u^i_\pm)\in \Sp_1(\bC)\ ,\ z \in \cV .$$
For any  $k^{\pm c}$,  the corresponding $k^{ia}$ are recovered using the inverse formula $k^{ia} = u^i_+k^{+ a}+ u^i_-k^{- a}$.
The lemma says that 
there exists  a  global solution to \eqref{frob1}, \eqref{reality_cond1}  
for any choice of initial values  $ \wh k^{\pm a} = k^{\pm a}|_{\cV\times\{I_2\}}\colon\cV\times\{I_2\} \ra \bC^{2n}$.
\end{rem}

\section{The existence of  canonical hk-pairs}  
\label{pf_vpot_thm}
\centerline{\smc Proof of Theorem\ \ref{vpot_thm}}
\setcounter{equation}{0}
\smallskip
\noindent
To begin, we need the following:

\begin{lem} \label{central} 
In every  local equivalence class  of  hk-pairs   
of signature $(4p, 4q)$  there exists 
 an hk-pair  $(\cA, M)$ with  $ \cA $ central 
(Def.\ \ref{centralframe})
and $M \subset   \{I_2\} \times \{I_{2n}\} \times \bC^{4n}$.
\end{lem}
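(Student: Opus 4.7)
The plan is to construct, from a given hk-pair $(\cA,M)$ on $\cU$, a $G$-equivariant biholomorphism $\Phi$ onto an appropriate open subset of $\cP$ that takes $\cA$ to a central hk-frame and carries $M$ into $\{I_2\}\times\{I_{2n}\}\times\bC^{4n}$.

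First, the two transversality conditions in Def.\ \ref{hk-pair} together imply $T_xM \cap \a^{\cA}_{(\bR)}(\gg)|_x = \{0\}$ for every $x\in M$, since $\gg\cap(V^\tau+\gsp_{p,q}) = \gsp_{p,q}$ inside $\gp = \gg+V$ and $\a^{\cA}_{(\bR)}$ is $\bR$-linear injective. Hence $M$ is transversal to the complex $G$-orbits of the $\rho$-action on $\cU$.

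Next, since $M$ is a real-analytic totally real submanifold of real dimension $4n$, after shrinking $\cU$ it extends to a complex submanifold $S\subset\cU$ of complex dimension $4n$ containing $M$ and transversal to the $G$-orbits at every point. The map $\mu\colon G\times S \to \cU$, $(g,y)\mapsto y\cdot g$, is then a $G$-equivariant biholomorphism near $(I,e)$. Under $\mu^{-1}$ the $\rho$-action becomes right multiplication on $G$, and $H_0,H_{\pm\pm},E_A$ pull back to the standard left-invariant vector fields $H^o_0,H^o_{\pm\pm},E^o_A$. I then pick a biholomorphism $\phi\colon S\to V'\subset V=\bC^{4n}$ with $\phi(e)=0$ and $\phi(M)\subset V^\tau$, using the standard fact that two real-analytic totally real submanifolds of equal dimension in complex manifolds are locally biholomorphically equivalent. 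The composition $\Phi_1 := (\mathrm{id}_G\times \phi)\circ \mu^{-1}$ is then a $G$-equivariant biholomorphism $\cU\to G\times V' \subset \cP$ sending $M$ into $\{I_2\}\times\{I_{2n}\}\times V^\tau$ and the triple $(H_0,H_{\pm\pm},E_A)$ to $(H^o_0,H^o_{\pm\pm},E^o_A)$.

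The remaining step, which I expect to be the main obstacle, is to arrange that the pushed-forward fields $(\Phi_1)_* e_{\pm a}$ take the form prescribed by Def.\ \ref{centralframe} --- equivalently, to make them tangent to the fibers of the projection $\cP\to\Sp_1(\bC)$. I will achieve this by composing $\Phi_1$ with an additional $G$-equivariant self-biholomorphism $\Phi_2\colon\cP\to\cP$ preserving $(H^o_0,H^o_{\pm\pm},E^o_A)$ and fixing the real slice $\{I_2\}\times\{I_{2n}\}\times V^\tau$ pointwise, so that the image of $M$ remains in place. By $G$-equivariance $\Phi_2$ is determined by its restriction to the complex slice $\{I_2\}\times\{I_{2n}\}\times V'$, and the requirement that it annihilate the $\gsp_1(\bC)$-direction components of $(\Phi_1)_* e_{\pm a}$ becomes a first-order holomorphic Frobenius system whose integrability follows from the Lie bracket relations \eqref{algebra} and \eqref{curv_constr} satisfied by an hk-frame. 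The complex Frobenius theorem applied to this system, with identity initial data on $V^\tau$, yields the required $\Phi_2$; the composite $\Phi_2\circ\Phi_1$ is then the sought $G$-equivariant biholomorphism producing an equivalent hk-pair in the form demanded by the lemma.
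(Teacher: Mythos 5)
Your route is genuinely different from the paper's: the paper disposes of this lemma in a few lines by passing to the pseudo-hyperk\"ahler metric determined by $(\cA,M)$ and then invoking the appendix construction (Lemma \Aref{appr_lem}), which rebuilds a central hk-pair from holomorphic extensions of the frame fields of the holonomy bundle of that metric; you instead try to build the equivalence directly inside $\cP$. Your first two steps are essentially sound (modulo the tacit assumption that $M$ is real analytic, without which the complexification $S$ of $M$ need not exist; the paper's own proof makes the same tacit assumption). The gap is in the $\Phi_2$ step. The conditions you impose --- $G$-equivariance together with the vanishing of the $\gsp_1(\bC)$-components of $\Phi_{2*}(\Phi_{1*}e_{\pm a})$ --- do reduce to a \emph{determined} first-order holomorphic system for the map $h\colon V'\to G$ through which every $G$-equivariant self-map is written as $\Phi_2(g,z)=(h(z)\,g,\,w(z))$, and its integrability is indeed the involutivity of $\cE:=\Span_\bC\{E^o_A,\Phi_{1*}e_{\pm a}\}$, which follows from \eqref{curv_constr}. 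But a determined first-order system admits initial data only at a single point: you cannot additionally prescribe $h\equiv\mathrm{id}$ on the $4n$-real-dimensional set $V^\t$. Without that, $\Phi_2$ will in general move $\Phi_1(M)$ off the slice $\{I_2\}\times\{I_{2n}\}\times\bC^{4n}$, and the conclusion of the lemma concerning $M$ is lost.

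What actually saves the argument --- and is absent from your write-up --- is condition (ii) of Def.\ \ref{hk-pair}. Since $\a^{\cA}$ is $\bC$-linear, one has $\a^{\cA}(V^\t)\subset\Span_\bC\{e_{\pm a}\}$ and $\a^{\cA}(\gsp_{p,q})\subset\Span_\bC\{E_A\}$, so that condition says precisely that $T_xM$ lies in the real tangent space $\{2\Re Z\,,\ Z\in\cE_x\}$ of the leaf of the involutive distribution $\cE$ through $x$; hence $M$ is contained in a \emph{single} leaf of $\cE$, the one through $e$. Identifying the three-dimensional leaf space with a neighbourhood of $I_2$ in $\Sp_1(\bC)$ by means of the residual $\Sp_1(\bC)$-action, one may take $h(z)=(\l(z),I_{2n})$ and $w=\mathrm{id}$, where $\l(z)$ records which leaf $(I_2,I_{2n},z)$ lies on, and then $\l\equiv I_2$ on $\phi(M)$ holds \emph{automatically} --- which is all that is needed. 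So the gap is fixable, but the fix is exactly the point at which the second transversality condition of an hk-pair must be used, and as written your proof never invokes it beyond the initial transversality count.
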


\begin{proof} 
As shown in Sect. \ref{hkpairs-metrics}, every hk-pair
$(\cA, M)$ is associated with a pseudo-hyper\-k\"ahler metric $g$ on $M$. We may then use  a local system of coordinates to identify  $M$ with an open subset $M' \subset \bR^{4n}$,  $g$ with a pseudohyperk\"ahler metric $g'$ on $M'$ and the hk-frame $\cA$ with the  hk-frame $\cA'$ of  
holomorphic extensions of  vertical and horizontal  vector fields 
of the covering of  the  holonomy  bundle of $(M', g')$ with structure group $\Sp_1 \times \Sp_n$. 
This means that $(\cA, M)$ is locally equivalent to $(\cA', M')$ and the  explicit construction of  the holomorphic extensions that give the vector fields  in $\cA'$ (see Lemma \Aref{appr_lem})
shows that   that  $\cA'$ is a central hk-frame  and 
$M' \subset   \{I_2\} {\times} \{I_{2n}\} {\times} \bC^{4n}$. 
\end{proof}

It follows from this lemma that  in order to  prove that every equivalence class of 
hk-pairs includes a canonical one, it suffices to  show the following:  Given an  
 hk-pair $(\cA,  M)$, with $\cA = (H^o_0, H^o_{\pm\pm}, E^o_A, e_{\pm a})$ 
 central  and  $M$ contained in $\{I_2\} {\times} \{I_{2n}\} {\times} \bC^{4n}$ and passing 
 through $e = (I_2, I_{2n}, 0)$, 
there exists a local biholomorphism $\varphi\colon\cU \to \cU'$, with $\varphi(e) = e$,  
between two appropriate open sets mapping   $\cA$ into a canonical   hk-frame $\f_*(\cA)$
having central coordinate components  $\f = ( \f^a_b, \f^i_\pm, \f^{ia})$ with  
$\varphi^i_\pm(U, B,z) = u^i_\pm$. 
Indeed, if we are able to prove this, we immediately have that   $(\f_*(\cA),\f(M))$ is 
a canonical hk-pair in the local equivalence class of $(\cA, M)$,  as desired.

Let   $(\cA,  M)$ be an hk-pair on an appropriate open subset $\cU \subset \cP$ with 
$\cA = (H^o_0, H^o_{\pm\pm}, E^o_A, e_{\pm a})$  central and 
$M \subset \{I_2\}\times\{I_{2n}\} \times \bC^{4n}$. With no loss of generality, we  may assume 
that  the restriction  of $e_{-a}$ to  $\cV = \cU \cap \{I_2\} {\times} \{I_{2n}\} {\times} \bC^{4n}$
 has the form  $e_{-a}|_{\cV} 
 = \frac{\partial}{\partial z^{-a}}\big|_{\cV} + A_{-a}^B E^o_B|_{\cV}$.   
Indeed,  this form can always be attained by   applying a biholomorphism of the form
 $(U, B, z) \mapsto (U, B, \psi(z))$ to $(\cA, M)$, for some  
  appropriate local transformation $\psi$  of $\bC^{4n}$. Such  appropriate  transformation $\psi$ surely
 exists because the images of the vector fields $e_{-a}$ on $\bC^{4n}$ under  the standard projection 
 $\pi\colon\cP \to \bC^{4n}$ are  commuting vector fields.

 Let us now show the existence of a local biholomorphism  $\f$, with $\f^i_\pm \,{=}\, u^i_\pm$, 
 that  maps $\cA$ to a canonical hk-frame.  We denote the  components in central coordinates 
 of the required biholomorphism as 
 $\varphi = (\varphi^A) = (\varphi^i_{\pm} \,{=}\, u^i_{\pm}\,,\, \varphi^a_b\,,\, \varphi^{ia})\,$ 
 and those   in analytic coordinates  as  
$ (\varphi^i_{\pm} \,{=}\, u^i_{\pm}\,,\, \varphi^a_b\,,\, \varphi^{\pm a} \,{=}\, -u^{\pm}_i \varphi^{ia})$. 
The images of the vector fields in  $\cA$ under such a biholomorphism $\f$ have the form: 
\begin{align*}
\varphi_*(H_0^o) 
&=  H_0^o + H_0^o {\cdot} \varphi^a_b \frac{\partial}{\partial B^a_b} 
         + H_0^o {\cdot} \varphi^{i a} \frac{\partial}{\partial z^{ia}} \\
&=  H_0^o + H_0^o {\cdot} \varphi^a_b \frac{\partial}{\partial B^a_b} 
        + (H_0^o {\cdot} \varphi^{+ a} - \varphi^{+a})\frac{\partial}{\partial z^{+a}} 
        +(H_0^o {\cdot} \varphi^{- a} + \f^{-a})\frac{\partial}{\partial z^{-a}}  
\\[1pt]
\varphi_*(H_{\pm\pm}^o) 
&=  H_{\pm \pm}^o + H_{\pm\pm}^o {\cdot} \varphi^a_b \frac{\partial}{\partial B^a_b} 
         + H_{\pm \pm}^o {\cdot} \varphi^{i a} \frac{\partial}{\partial z^{ia}} 
\\[1pt]
\varphi_*(E_B^o ) 
&=  E_B^o {\cdot} \varphi^{ia} \frac{\partial}{\partial z^{ia}} 
    + E_B^o {\cdot} \varphi^a_b \frac{\partial}{\partial B^a_b}
\\[1pt]
\varphi_*(e_{\pm a}) 
&=   e_{\pm a}{\cdot} \varphi^c_b \frac{\partial}{\partial B^c_b} 
        + e_{\pm a}  {\cdot} \varphi^{i b} \frac{\partial}{\partial z^{ib}}\ .
\end{align*}
Hence the pushed-forward hk-frame $\varphi_*(\cA)$ is canonical if and only if $\f$ satisfies the following four conditions.
\begin{itemize} [itemsep=2pt, leftmargin=18pt]
\item[i)] $ \varphi_*(E^o_B) - E^o_B = 0$, which means that 
    $\varphi^{ia}$ does not depend on   $B^a_b$ and    $\varphi^a_b$   has  
 the form 
\beq 
\label{phi_trfn} 
\varphi^a_b\left((u^i_\pm), (B^e_f), (z^{ja})\right) 
                   = \varphi^a_c\left((u^i_{\pm}), I_2,  (z^{ja})\right)B^c_b 
\eeq
\item[ii)]  $ \varphi_*(H^o_0) - H^o_0 = 0$ and $\varphi_*(e_{+a}) - e^o_{+a} = 0$, 
which are equivalent to    
\begin{align}
H_0^o \cdot \varphi^a_b &=  0\,, & H^o_0 \cdot \varphi^{\pm a}&= \mp \varphi^{\pm a}, &
\label{phi0}\\
e_{+ a}\cdot \varphi^c_b &= 0\,, & e_{+a} \cdot \varphi^{- b} &= 0\,, 
& e_{+a} \cdot \varphi^{+ b} = \f^b_a
\label{phi+}
\end{align}
\item[iii)] the components $
v_{++}^{+b} $ 
of the vector field 
$$
H_{++} 
= \varphi_*(H^o_{++}) = H^o_{++} + v^{\pm b}_{++} e_{\pm b}^o + A_{++}^B E^o_B\ ,
$$
which are given by 
$v_{++}^{+b} = - u^+_i (H_{++}^o {\cdot} \varphi^{i b})  
 = H_{++}^o {\cdot} \varphi^{+ b} - \varphi^{-b}$, 
are such that   $v_{++}^{+b}|_{\{z^{+a} = 0\}} = 0$
\item[iv)]  the components $v^{-b}_{-a}$ of  
of the vector field     
$$\d e_{-a} 
= \varphi_*(e_{-a}) - e_{-a}^o =  v^{\pm b}_{-a} e^o_{\pm b} + A_{-a}^B E^o_B\ ,$$
which are given by 
$v^{-b}_{-a}  = e_{-a}{\cdot} \varphi^{-b} - \f^a_b$, 
are identically equal to $0$.
\end{itemize}
\noindent 
It remains  to prove that there exists a  
$ \f = (\varphi^i_{\pm} \,{=}\, u^i_{\pm}\,,\, \varphi^a_b\,,\, \varphi^{ia})\,$ 
satisfying  (i)-(iv), with $\varphi(I_2, I_{2n}, 0) \,{=}\,  (I_2, I_{2n}, 0)$. 
First  we define   $\wh \cV := \cV \,{\cap}\, \{z^{+a} \,{=}\, 0\}$, 
with $\cV := \cU \cap \{I_2\} \times \{I_{2n}\} \times \bC^{4n}$, and  consider a   holomorphic map 
 $$
 (g^{-b})\colon \cH|_{\wh \cV} \cdot \Sp_n(\bC)
 := \Sp_1(\bC) \times\Sp_n(\bC) \times  \wh \cV \to \bC^{2n}
 $$
 independent of $B$,  with charge $+1$,  such that 
  $g^{- b}(I_2, I_{2n}, 0) = 0$ and \linebreak $e_{-a}{\cdot}g^{-b}|_{(I_2, I_{2n}, 0)} = \d_a^b$. Second,  
we set 
\beq 
\label{g0equationbis}
g^b_a\colon \cH|_{\wh \cV} \cdot \Sp_n(\bC) \to \bC\ ,\qquad g^b_a:= e_{-a}{\cdot}g^{-b}\ .
\eeq
Third, using Lemma \ref{lemma_split}, we determine  functions
  $g^{+a}\colon \cH|_{\wh \cV} \cdot \Sp_n(\bC) \to \bC$ satisfying 
\beq
 \label{g0equation} 
  H^o_0\cdot g^{+b} =  - g^{+b},  \qquad H_{++}^o \cdot g^{+ b} =  g^{-b},
 \eeq
with  initial data chosen to be independent of $B$ and with  $g^{+a}(I_2, I_{2n}, 0) = 0$. 

We now extend  the functions $g^{\pm a}, g^c_d\colon\cH|_{\wh \cV} \cdot \Sp_n(\bC) \to \bC$ 
to holomorphic functions $\varphi^{\pm c}, \varphi^c_d$  on 
an appropriate open set  $\cU = \cH|_{\cV} \,{\cdot}\, \Sp_n(\bC) $   as follows.  
First,  we consider the   points $y \in \cP$ of the form
\beq 
y(x, t^1, \ldots, t^{2n}) := \Phi^{e_{+1}}_{t^1} \circ \ldots \circ  \Phi^{e_{+ 2n}}_{t^{2n}}(x) \ ,
\quad \ x \in \cP|_{\wh \cV}\ ,\  t^j \in \D_\varepsilon(0) \subset   \bC\ ,
\label{points}
\eeq
where $\Phi^{e_{+b}}_s$ is the  holomorphic flow of the vector field  $e_{+b}$ parametrised by $s$.  
Second, we set 
\begin{align*} 
&\varphi^{-b}(y(x, t^1, \ldots, t^{2n})) :=   g^{-b}(x)\ ,\\
 &\varphi^{+b}(y(x, t^1, \ldots, t^{2n})) := g^b_a(x) t^a + g^{+b}(x) ,\\
& \varphi^b_c(y(x, t^1, \ldots, t^{2n})) := g^b_c(x) .
 \end{align*}
 By construction, the map $\varphi = (\f^i_{\pm} = u^i_\pm, \f^a_b, \f^{\pm a})$ is such that  $\varphi(I_2, I_{2n}, 0) = (I_2, I_{2n}, 0)$ and it satisfies  (i).
Since  $g^a_b, g^{\pm a}$ satisfy  \eqref{g0equationbis} and \eqref{g0equation},   the  map $\varphi$    is a solution of    \eqref{phi0},   \eqref{phi+},  hence it satisfies (ii). Moreover, \eqref{g0equation} implies  $(H_{++}^o {\cdot} \varphi^{+ b} - \varphi^{-b} )|_{\{z^{+a} = 0\}} = 0$, so that    (iii) holds.  Finally, from  \eqref{g0equationbis} we see that also (iv) is satisfied, meaning that $\f$ is a bridge.

To conclude the proof of Theorem\ \ref{vpot_thm}, it remains to show that  canonical hk-pairs with  the same v-potential  have the same hk-frame and are locally equivalent.
Let $(\cA, M),\, (\cA', M')$ be canonical  hk-pairs   on  an appropriate open set 
$\cU \subset \cP$, with bridges $\varphi\,,\, \varphi'$ and  identical  v-potentials,
 $$
 v^{-a}_{++}|_{\cH|_\cV} = v'{}^{-a}_{++}|_{ \cH|_\cV}\ , \qquad \cH|_\cV := \cU \cap \cH\ .
 $$ 
We claim that all  components of $H_{++}$ (and,   similarly,  of $H'_{++}$)  are completely 
determined by the v-\-poten\-tials $v^{-a}_{++}|_{ \cH|_\cV} $ ($= v'{}^{-a}_{++}|_{ \cH|_\cV})$. 
For this, we first observe that   $[H^o_0, H_{++}] = 0$
 implies  that the components  $\ v_{++}^{-a}\,,\, v_{++}^{+ a}\,,\,A_{++}^B(E_B^o)_a^b\ $  
 of $\,H_{++}\,$  have charges 
$k = 3\,,\, 1$ and $2$, respectively. Further, the relation 
 $[E^o_A, H_{++}] = 0$ implies that these components   are $\gsp_n(\bC)$-equi\-variant and 
 hence  are uniquely determined by their restrictions  to $\cH|_\cV$. 
It therefore  suffices  to check that  
$v_{++}^{+ a}\vert_{\cH|_\cV}$, $A_{++}^B(E_B^o)_a^b\vert_{\cH|_\cV}$ 
are uniquely determined by the v-potential 
 $v^{-a}_{++}|_{ \cH|_\cV}$.

 We now  recall  that $e^o_{+a}  =  [H_{++}\,,\, e_{-a}] $ and $[H_{++}\, ,\, e^o_{+a}] = 0$. 
 Expanding  all vector fields in terms of the flat hk-frame, we get
\begin{align*}
e^o_{+a}  &=  [H_{++}\,,\, e_{-a}] 
= [H^o_{++} + v_{++}^{\pm b} e_{\pm b}^o + A_{++}^A E_A^o\,,\, e^o_{-a} 
   + v_{-a}^{+b} e_{+b}^o + A_{-a}^B E^o_B] 
\\[2pt]    
    &=  \d^b_a e^o_{+b} + H_{++}^o {\cdot} v^{+b}_{-a} e^o_{+b} + H_{++}^o {\cdot} A_{-a}^B E_B^o 
             - e^o_{-a}  {\cdot}   v_{++}^{+ b} e_{+ b}^o - e^o_{-a} {\cdot}   v_{++}^{- b} e_{- b}^o
\\  
&\quad  + v_{++}^{+ c} e_{+ c}^o {\cdot}  v^{+b}_{-a} e^o_{+b} + v_{++}^{- c} e_{- c}^o {\cdot}   v^{+b}_{-a} e^o_{+b} 
        - v^{+c}_{-a} e^o_{+c}{\cdot}  v_{++}^{+b} e_{+ b}^o - v^{+c}_{-a} e^o_{+c}{\cdot}  v_{++}^{- b} e_{- b}^o 
\\  
&\quad  +  v_{++}^{+ c} e_{+ c}^o {\cdot}  A_{-a}^B E_B^o + v_{++}^{- c} e_{- c}^o {\cdot}  A_{-a}^B E_B^o  
      + A_{++}^B (E_B^o)^b_a e^o_{-b} -  e^o_{-b} {\cdot}  A_{++}^B E_B^o 
\\   
&\quad  + A_{++}^A (E_A^o)^c_a v_{-c}^{+b} e^o_{+b} -  v_{-a}^{+c} e^o_{+c} {\cdot}  A_{++}^B E_B^o + A_{++}^A (E_A^o)^c_a A_{-c}^B E_B^o    
\\[2pt]   
&=   \left(-  e_{-a}^o {\cdot}  v_{++}^{- b} - v^{+c}_{-a} e^o_{+c}{\cdot}  v_{++}^{- b} + A_{++}^A (E_A^o)^b_a\right) e_{- b}^o 
           \mod\{ e^o_{+b}, E^o_B\} \ ,
\\[10pt]   
 0 &=  [H_{++}\, ,\, e^o_{+a}] = [H^o_{++} + v_{++}^{\pm b} e_{\pm b}^o + A_{++}^A E_A^o\,, \, e^o_{+a}] 
\\[2pt]   
&= \left(A_{++}^A (E_A^o)^b_a  - e^o_{+a} {\cdot}  v^{+b}_{++}  \right)  e_{+b}^o   - e^o_{+a} {\cdot}  v^{-b}_{++}  e_{-b}^o  - e^o_{+a} {\cdot} A^{A}_{++}  E_A^o\ .
\end{align*}
It follows that 
 \beq\label{potentials}
 \begin{split}
 e^o_{+a} \cdot v^{-b}_{++}  &=  0\ ,
\\[2pt]   
  A_{++}^A (E_A^o)^b_a &=   
       e_{-a}^o \cdot v_{++}^{- b}  +  v^{+c}_{-a} \, e^o_{+c}\cdot v_{++}^{- b} 
       = e_{-a}^o \cdot v_{++}^{- b}\ , 
\\[2pt]   
 e^o_{+a} \cdot v^{+b}_{++} &=  A_{++}^A (E_A^o)^b_a =   e_{-a}^o \cdot v_{++}^{- b}\ . 
 \end{split}
\eeq 
Since   $v^{+a}_{++}|_{\{z^{+a} = 0\} } = 0$, these equations show  that  $v_{++}^{+ a}$ and   $A_{++}^B(E_B^o)_a^b$ are uniquely determined by 
the (first derivatives of the) functions $v_{++}^{- b}$, as claimed. So $H_{++}$ (and $H'_{++}$)  is completely determined by the v-potential, as claimed.
Since  the two v-potentials are equal, it follows also that $H_{++} = H'_{++}$.

Now,   applying the  inverse of the bridge $\varphi$ to both   hk-frames $\cA$ and $\cA'$ we obtain the new  hk-frames
\begin{align*} \f^{-1}_*(\cA) &= (H^o_0, H^o_{++} , H^o_{--} , E^o_A, \wh e_{\pm a}:= \varphi^{-1}_*(e_{\pm a}))\ ,\\
\f^{-1}_*(\cA') & = (H^o_0, H^o_{++} , \wh H'_{--} := \varphi^{-1}_*(H'_{--}), E^o_A, \wh e'_{\pm a}:= \varphi^{-1}_*(e'_{\pm a}) )\ ,
\end{align*}
where we used the facts that  $H_{++} = H_{++}'$ and that  $\f$ is a bridge from a central hk-frame  to  the canonical hk-frame $\cA$.
Now, if we can  prove   that $\wh H_{--}' = H^o_{--}$, it would immediately follow that 
$H'_{--} = \f_*(\wh H_{--}')$ coincides with  $H_{--}$ and  that $e'_{-a} = [H'_{--}, e^o_{+a}] = [H_{--}, e^o_{+a}] = e_{-a}$, meaning that $\cA = \cA'$.

Since    $\varphi^i_{\pm} = u^i_{\pm}$, the vector field  $\wh H'_{--}$ has the form 
$$
\wh H'_{--} 
=   H^o_{--} + v_{--}^{+ a} e_{+ a}^o +  v_{--}^{- a} e_{- a}^o + A_{--}^B E_B\ .
$$
On the other hand,  
$$\ 
[H^o_0, \wh H'_{--}] = \f^{-1}_*([H^o_0, H'_{--}]) 
                                =   \f^{-1}_*(- 2 H'_{--})  = - 2 \wh H'_{--}\ .
$$
Thus the components $v_{--}^{+ a}\,,\,v_{--}^{-a}$ and $A_{--}^B$ have charges 
$-3$, $-1$ and $-2$, respectively. Further, 
\begin{align*}
 H^o_0 &=   \f^{-1}_*([H_{++}', H_{--}'])   = [H_{++}^o, \wh H'_{--}] \\
 &= [H_{++}^o\ ,\  H^o_{--} + v_{--}^{+ a} e_{+ a}^o +  v_{--}^{- a} e_{- a}^o + A_{--}^B E_B] 
 \end{align*}
implies that 
 $$
 H^o_{++} \cdot v^{+a}_{--} +  v_{--}^{-a}  = 0\,,\qquad  
 H^o_{++} \cdot v_{--}^{-a}  = 0 \,,\qquad 
 H^o_{++} \cdot A_{--}^B  = 0\,.
 $$
Since the  functions $v_{--}^{+ a}\,,\,v_{--}^{-a}$ and $A_{--}^B$ are negatively charged,  
they vanish by Lemma \ref{lemma_split}.  Thus  $\wh H'_{--} = H^o_{--}$ and $\cA = \cA'$, 
as required. 

We now observe that by definition of hk-pairs,  the   $\Sp_{p,q}$-orbits of the points of $M$ and $M'$  (namely, the submanifolds   $\cU^{(\Sp_{p,q})}$, $\cU'{}^{(\Sp_{p,q})} \subset \cU$ defined in \eqref{USP}) determine two integral submanifolds 
of the distribution  generated by the vector fields in $\a^{\cA}_{(\bR)}(V^\t + \gsp_{p,q})$.  Since $e = (I_2, I_{2n}, 0)$ belongs to both of them,  
$\cU^{(\Sp_{p,q})} = \cU'{}^{(\Sp_{p,q})} $ and 
  $M'$ can be identified with  a section of the (trivial) $\Sp_{p,q}$-bundle $\pi: \cU^{(\Sp_{p,q})} \simeq M \times \Sp_{p,q} \to M$. 
So,  if  we  construct  pseudo-hyperk\"ahler metrics $g$ and $g'$ on $M$ and $M'$, respectively,     as  in Sect.\ref{hkpairs-metrics}, 
we see that  the projection $\pi|_{M'}: M' \to M$  maps  the vielbeins of $(M',g')$  onto vielbeins of $(M,g)$ and is therefore an 
 isometry between $(M', g')$ and $(M,g)$.  Proposition \ref{loc.equiv} implies  that $(\cA, M)$ and $(\cA = \cA', M')$
 are locally equivalent. This concludes the proof of Theorem \ref{vpot_thm}.

 \section
 {Parameterisation  of  canonical hk-pairs} \label{pf_prepot_thm}
 \centerline{\smc  Proof of Theorem \ref{prepot_thm}}
\medskip
\noindent 
 The proof is divided into two steps. We first need to prove that   for every  
 prepotential $\cL_{(+4)}$ there exists  a canonical hk-pair  whose
 v-potential   is related to $\cL_{(+4)}$ by \eqref{spippolino}.  We then need to prove  the converse statement:  every  canonical hk-pair has  a uniquely associated prepotential  satisfying   \eqref{spippolino}.
 
\goodbreak
\noindent
{\bf Step 1:  Existence of a canonical  hk-pair for every  prepotential}  

\noindent 
Consider   a charge $k {=} 4$ holomorphic map 
$ \cL_{(+4)}\colon \cH\vert_\cV  \rightarrow \bC$  on an harmonic space $\cH\vert_\cV$
satisfying \eqref{spippolo}.  It follows from \eqref{potentials}  that if  there exists 
an hk-pair $(\cA, M)$  with v-potential satisfying \eqref{spippolino}, then 
the components   of the vector field 
\beq\label{defH++} 
H_{++}  =  H^o_{++} +  v^{+ c}_{++} e^o_{+ c} + v^{- c}_{++} e^o_{- c} + A^C_{++} E^o_C
\eeq
necessarily   have the following form  at any point 
 $ x {\cdot} B \in\cU :=  \cH\vert_\cV {\cdot}  \Sp_{n}(\bC)$,  $x \in \cH$, $B \in \Sp_n(\bC)$, 
\beq\label{occhio} 
\begin{split} 
v^{-a}_{++}|_{x {\cdot} B} 
&=   (B^{-1})^a_c\, \o^{cd}  \frac{\partial \cL_{(+4)}}{\partial z^{-d}}\Big|_{x}  
\\
A^B_{++} (E^o_B)^a_b\vert_{x {\cdot} B} 
&=  (B^{-1})^a_c\, B^d_b\, \o^{ce}  
              \frac{\partial^2 \cL_{(+4)}}{\partial z^{-d} \partial z^{-e}}\Big|_{x} 
\\
v^{+a}_{++}|_{x {\cdot} B} &= (B^{-1})_b^a \,\wt v^{+b}_{++}|_x \ .
\end{split}
\eeq
Here  $(\o^{ab}) = (\o_{ab})^{-1}$ and  the  functions $\wt v^{+a}_{++}\colon \cH\vert_\cV \rightarrow \bC^{2n}$
are  solutions to  the  differential problem 
$$ \frac{\partial \wt v^{+a}_{++} }{\partial z^{+b}} 
= \o^{ac}  \frac{\partial^2 \cL_{(+4)}}{\partial z^{-b} \partial z^{-c}}\ ,
\qquad      \wt v^{+a}_{++}|_{\{z^{+a} = 0\}} \equiv 0\ .
$$
Since $e^o_{+a} {\cdot} \cL_{(+4)} = \frac{\partial \cL_{(+4)}}{\partial z^{+a}} = 0$,     
this has a unique solution, linear in $z^{+b}$,
\beq \label{firstsystem-bis} 
\wt v^{+a}_{++} =  \o^{ac}  \frac{\partial^2 \cL_{(+4)}}{\partial z^{-b} \partial z^{-c}} z^{+b}\ . 
\eeq
Now, as an ansatz we take $H_{++}$ with components  thus determined by 
$\cL_{(+4)}$  and we search for a local biholomorphism  $\varphi$, with $\varphi(I_2, I_{2n}, 0) = (I_2, I_{2n}, 0)$,  whose components  
in central coordinates satisfy a  system  of differential equations which corresponds  to a special subset of  the conditions  that  characterise a bridge. 
In the next three lemmata,  we shall show that: a)
these  equations admit at least one global solution  $\varphi$ on $\cH\vert_\cV\,$
(Lemma \ref{ic_lem}), b)
in the class  of global solutions there exists one satisfying a special set of 
 initial conditions (Lemma \ref{strange}), c) using such  a special solution
we may construct  an hk-pair $(\cA, M)$ having $\cL_{(+4)}$ as prepotential and the map $\f$ as  a bridge  (Lemma \ref{hk_lem}). 
These lemmata will  conclude  the proof of Step\,1.

\begin{lem} \label{ic_lem} 
On an appropriate open set $\,\cU = \cH|_{\cV}\, {\cdot}\, \Sp_n(\bC) \subset \cP$, 
the differential equations  
\beq  
\varphi_*(E_A^o) = E^o_A\,,\quad  \varphi_*(H_0^o) = H^o_0\,,\quad  
\varphi_*(H_{++}^o) = H_{++} \ ,  \label{secondsystem} 
\eeq
admit at least one  global solution  $\varphi\colon \cU   \rightarrow \cU$
with $\ \varphi^i_{\pm} =  u^i_\pm\,$.
\end{lem}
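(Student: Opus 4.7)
The plan is to convert the three conditions in \eqref{secondsystem} into an initial-value problem of the form covered by Lemma \ref{charge2_lem} and then to apply its global-existence statement (part B). The two easier conditions already do most of the reduction: as observed around \eqref{phi_trfn}, $\varphi_*(E^o_A) = E^o_A$ together with $\varphi^i_{\pm} = u^i_\pm$ forces $\varphi^{ia}$ to be independent of $B$ and $\varphi^a_b(U,B,z) = \hat\varphi^a_c(U,z)\,B^c_b$ for some holomorphic map $\hat\varphi\colon\cH|_\cV \to \Sp_n(\bC)$. The remaining unknowns are therefore the two holomorphic maps $\hat\varphi^a_c(U,z)$ and $\varphi^{ia}(U,z)$ on $\cH|_\cV$, both of vanishing $H^o_0$-charge; in this parametrisation $\varphi_*(H^o_0)=H^o_0$ is simply the charge constraint $H^o_0\cdot\hat\varphi^a_c = 0 = H^o_0\cdot\varphi^{ia}$.

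The core step is to write $\varphi_*(H^o_{++}) = H_{++}$ in central coordinates, using \eqref{defH++}, \eqref{occhio} and \eqref{firstsystem-bis}. The crucial observation is that the $B$-dependence cancels on the two sides of the equation: on the left it appears only through $H^o_{++}\cdot\varphi^a_b = (H^o_{++}\cdot\hat\varphi^a_c)B^c_b$, while on the right it enters through the conjugation factors $(B^{-1})^a_c(\cdots)B^d_b$ in \eqref{occhio} evaluated at the image point $\varphi(U,B,z) = (U,\hat\varphi(U,z) B,\varphi^{ia}(U,z))$, where it cancels exactly. The resulting autonomous PDE on $\cH|_\cV$ takes the form
\[
H^o_{++}\!\cdot\!\hat\varphi^a_c = \hat\varphi^d_c\,\omega^{ae}\,\frac{\partial^2\cL_{(+4)}}{\partial z^{-d}\partial z^{-e}}\bigg|_{(U,\varphi(U,z))}, \qquad
H^o_{++}\!\cdot\!\varphi^{ia} = F^{ia}(U,\varphi(U,z)),
\]
where $F^{ia}$ is built holomorphically from $\omega^{ac}\partial\cL_{(+4)}/\partial z^{-c}$, $\wt v^{+a}_{++}$ and the factors $u^i_\pm$ that appear in the change between analytic and central coordinates.

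Bundling $(\hat\varphi^a_c, \varphi^{ia})$ into a single holomorphic map $k\colon\cH|_\cV \to \bC^{4n^2+4n}$, the system above has precisely the shape of \eqref{frob1}; the only difference with Lemma \ref{charge2_lem} is the size of the target space, and the Frobenius argument in its proof extends to this case verbatim. The charge $+2$ hypothesis on the right-hand sides is a straightforward count: $\cL_{(+4)}$ has charge $+4$, each derivative $\partial/\partial z^{-c}$ drops the charge by $1$, and the factors $u^i_\pm$ carry the compensating charges $\pm 1$. With the identity initial data $\hat\varphi^a_c|_{U=I_2}(z)=\delta^a_c$ and $\varphi^{ia}|_{U=I_2}(z) = z^{ia}$, part B of Lemma \ref{charge2_lem} produces a global holomorphic solution on $\cH|_\cV$; reinserting the $B$-factors via $\varphi^a_b = \hat\varphi^a_c B^c_b$ yields the required global map $\varphi$ on $\cU = \cH|_\cV\cdot\Sp_n(\bC)$, which by construction satisfies $\varphi^i_\pm = u^i_\pm$ and $\varphi(I_2,I_{2n},0) = (I_2, I_{2n}, 0)$. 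The principal obstacle is the algebra that confirms the cancellation of all $B$-dependence and the correct charge $+2$ of the right-hand side after the coordinate change; once this bookkeeping is done, Lemma \ref{charge2_lem} supplies the analytic content.
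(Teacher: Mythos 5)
Your proposal is correct and follows the paper's overall strategy: use $\varphi_*(E^o_A)=E^o_A$ together with $\varphi^i_\pm=u^i_\pm$ to reduce to the charge-zero unknowns $\wh\varphi^a_c,\varphi^{ia}$ on $\cH|_\cV$ via \eqref{phi_trfn}, rewrite $\varphi_*(H^o_{++})=H_{++}$ as a first-order system along $H^o_{++}$ with right-hand sides built from $\cL_{(+4)}$ through \eqref{occhio}--\eqref{firstsystem-bis}, and then invoke global existence on harmonic space. Where you diverge is the last step. The paper exploits the fact that the system \emph{decouples}: after the $B$-cancellation, the equation \eqref{sisteminobis} for $\varphi^{ia}$ is closed in $\varphi^{ia}$ alone (the $\wh\varphi^a_b$'s cancel against $(\wh\varphi^{-1})^b_c$ coming from the $\gsp_n(\bC)$-equivariance of the components of $H_{++}$), so it is solved first by Lemma \ref{charge2_lem}; the remaining matrix equation then has known coefficients and is handled via the substitution $\varphi^a_b=(e^\psi)^a_b$ and Lemma \ref{lemma_split}. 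You instead keep the coupled system for $(\wh\varphi^a_c,\varphi^{ia})$ and apply Lemma \ref{charge2_lem} with an enlarged target $\bC^{4n^2+4n}$. This is legitimate: nothing in the proof of that lemma --- the Frobenius argument for part A, and the $\psi$-symmetry plus the two-patch covering of $\Sp_1(\bC)$ for part B --- uses the tensor-product structure or the dimension of the target, so the extension really is verbatim, and the charge count giving $+2$ on all right-hand sides goes through componentwise exactly as you describe. Your route buys a slightly cleaner final step (it bypasses the exponential substitution, whose justification the paper leaves implicit, and avoids invoking Lemma \ref{lemma_split} with holomorphic dependence on the parameter $z$), at the cost of having to state explicitly that Lemma \ref{charge2_lem} holds for an arbitrary finite-dimensional target; both arguments are sound.
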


\begin{proof}  
 In virtue of \eqref{coord_vfs},  the differential equations take the form
\begin{align}\label{prima-bis} 
(E_B^o \cdot \varphi^A) \frac{\partial}{\partial x^A} 
 &= \varphi^a_c (E_B^o)_b^c \frac{\partial}{\partial B^a_b}\,,
\\ \label{seconda-bis}
 (H_0^o\cdot \varphi^A) \frac{\partial}{\partial x^A} 
 &= \varphi^i_+ \frac{\partial}{\partial u^i_+}  \,-\,  \varphi^i_- \frac{\partial}{\partial u^i_-} \,,
\\ \label{terza}
(H_{++}^o \cdot  \varphi^A ) \frac{\partial}{\partial x^A} 
&= \varphi^i_+ \frac{\partial}{\partial u^i_-}  
      \,+\, \left.v^{\pm b}_{++}\right|_{\varphi} \varphi^a_b \varphi^j_{\pm} \frac{\partial}{\partial z^{ja}} 
       +  \left.A^B_{++}\right|_{\varphi}\varphi^a_c (E^o_B)^c_b \frac{\partial}{\partial B^a_b}\,. 
\end{align}
To prove the existence  of  solutions  with $\varphi^i_{\pm} = u^i_{\pm}$, we 
first  note that  solutions  of  \eqref{prima-bis} are   maps  such that 
a) the components   $\varphi^{ia}$ do not depend on $B$,  i.e.  
 $
 \varphi^{ia}= \varphi^{ia}((u^i_{\pm}), (z^{ja}))
 $,  and
b)  the components $\varphi^a_b$   satisfy eq. \eqref{phi_trfn},
with  $\varphi^a_b((u^i_{\pm}),  (z^{jc}))$  denoting the restriction  $ \varphi^a_b|_{\cH}$ . 
Thus  the problem reduces to looking for   holomorphic functions  
$\varphi^a_c\,,\, \varphi^{ia}$ on   $\cH|_{\cV}$  satisfying  \eqref{seconda-bis} 
and \eqref{terza}, with $\varphi^i_\pm = u^i_\pm$.  These equations   say    
that $ \varphi^a_c\,,\, \varphi^{ia}$ have  charge $0$ and using   
\eqref{occhio}-\eqref{firstsystem-bis} we obtain
\begin{align} 
\nonumber   H_{++}^o \cdot \varphi^{ia}   
& = v^{+ b}_{++}|_{\varphi} \varphi^a_b\, u^i_{+} 
        + v^{- b}_{++}|_{\varphi} \varphi^a_b\, u^i_{-}  \\
& = u^i_{+}   \o^{ac} 
        \frac{\partial^2 \cL_{(+4)}}{\partial z^{-b} \partial z^{-c}}\Big|_{(u^i_{\pm}, \varphi^{ja})} 
         u^+_j \varphi^{jb}  
+  u^i_{-}  \o^{ad}  
         \frac{\partial \cL_{(+4)}}{\partial z^{-d}}\Big|_{(u^i_{\pm}, \varphi^{ja})}
\label{sisteminobis}\\[5pt]
 H_{++}^o \cdot \varphi^a_b   
 & =  \varphi^a_c\,  A^B_{++}|_{\varphi} \, (E^o_B)^c_b 
 =  \varphi^c_b \, \o^{ad}  
         \frac{\partial^2 \cL_{(+4)}}{\partial z^{-c} \partial z^{-d}}\Big|_{(u^i_{\pm}, \varphi^{ja})}.
\label{sistemino}
\end{align}
Now, writing  $\varphi^a_b = (e^\psi)^a_b$ with 
$\, \psi\colon\cH|_\cV \to \bC^{2n}\otimes \bC^{2n}$,   equation  \eqref{sistemino} takes the form
\begin{equation}
H_{++}^o \cdot \psi^a_b =    
\o^{ac} \frac{\partial^2 \cL_{(+4)}}{\partial z^{-c} \partial z^{-b}}\Big|_{(u^i_{\pm}, (\varphi^{i a}))}.
\label{sisteminoquater}
\end{equation}
Equation \eqref{sisteminobis} decouples.  Its right hand side has charge $k = 2$, 
so by Lemma  \ref{charge2_lem},  it admits   a   global  solution  $\varphi^{ia}$ 
on $\cH|_{\cV}$. Inserting this  in \eqref{sisteminoquater}, we obtain a 
inhomogeneous linear  equation for $\psi^a_b$, which admits  a global solution  
by  Lemma \ref{lemma_split}. 
 \end{proof}
 \begin{rem} \label{remarkable} Since
 $H_{++}{\cdot} u^{+}_i = - u^{-}_i$, $H_{++} {\cdot} u^{-}_i = 0$, 
writing $\varphi^{\pm a} := - u^{\pm}_j \varphi^{ja}$,  
equation  \eqref{sisteminobis}  
allows the   convenient reformulation 
\begin{align}
 H_{++}^o \cdot \varphi^{-a}  
 &= \o^{ab}  \frac{\partial \cL_{(+4)}}{\partial z^{-b}}\Big|_{(u^i_{\pm}, \varphi^{- c})}\ ,
 \label{sisteminoquater1}\\
 H_{++}^o \cdot \varphi^{+a}    
 &=   \varphi^{+b} \o^{ac} 
            \frac{\partial^2 \cL_{(+4)}}{\partial z^{-b} \partial z^{-c}}\Big|_{(u^i_{\pm}, (\varphi^{- d}))} 
       + \varphi^{-a}  . 
\label{sisteminoquater2}
\end{align}
 Note that the first equation is a  nonlinear differential equation in  $\varphi^{-a}$ only, 
 while the second is linear and inhomogeneous in the remaining variable $\varphi^{+a}$. 
 \end{rem}
 
 \begin{lem} \label{strange} 
 There exists a global  solution 
$\varphi = (\varphi^i_{\pm} = u^i_{\pm}\,,\, \varphi^a_b\,,\, \varphi^{ia})$ 
to the system \eqref{secondsystem}  on an appropriate open set 
$\cU = \cH|_{\cV} {\cdot} \Sp_n(\bC)$  satisfying the additional condition 
 \beq \label{strange_constraint}
 \varphi^{+a}|_{\{I_2\} \times \cV} 
 = (- H_{--} \cdot \varphi^{-a}+ c^{+a})|_{\{I_2\} \times \cV}\, ,\quad 
 \f^a_b|_{\{I_2\} \times \cV} = \d^a_b\ ,
 \eeq
 where $\ \varphi^{\pm a} := - u^{\pm}_j \varphi^{ja}\ $ and 
 $\ c^{+a}:= H_{--} \cdot \varphi^{-a}|_{(I_2, 0)}$ .
 \end{lem}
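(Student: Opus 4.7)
The plan is to apply Lemma~\ref{charge2_lem}(B) to the PDE system \eqref{sisteminobis}, \eqref{sistemino} derived in Lemma~\ref{ic_lem}, with initial data on $\{I_2\}\times\cV$ carefully chosen to enforce the constraints \eqref{strange_constraint}. The decisive observation is the decoupling recorded in Remark~\ref{remarkable}: equation \eqref{sisteminoquater1} is closed in $\varphi^{-a}$ alone. Hence, within the framework of Lemma~\ref{charge2_lem}(B) applied to the charge-$0$ unknowns $\varphi^{ia}$ via the substitution $\varphi^{\pm a} = -u^\pm_j\varphi^{ja}$, the $\varphi^{-a}$-component of any global solution is determined solely by the initial datum $\hat\varphi^{-a}(z)=\varphi^{-a}|_{(I_2,z)}$, independently of $\hat\varphi^{+a}$. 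This allows us to first pin down $\varphi^{-a}$ globally, then compute $H^o_{--}\cdot\varphi^{-a}|_{\{I_2\}\times\cV}$, and only afterwards prescribe the initial datum for $\varphi^{+a}$ required by \eqref{strange_constraint}.

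Concretely, pick an arbitrary holomorphic $\hat\varphi^{-a}\colon\cV\to\bC^{2n}$ with $\hat\varphi^{-a}(0)=0$. Apply Lemma~\ref{charge2_lem}(B) to \eqref{sisteminobis} with this $\hat\varphi^{-a}$ and any temporary $\hat\varphi^{+a}$, and take the $\varphi^{-a}$-component of the resulting global solution; by the decoupling this $\varphi^{-a}$ is unique and independent of the temporary choice. Next, define
\[
c^{+a} := H^o_{--}\cdot\varphi^{-a}\big|_{(I_2,0)}\,,\qquad
\hat\varphi^{+a}(z) := -H^o_{--}\cdot\varphi^{-a}\big|_{(I_2,z)} + c^{+a}\,,
\]
which is holomorphic on $\cV$ and vanishes at the origin, so that $\varphi(I_2,I_{2n},0)=(I_2,I_{2n},0)$. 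A further application of Lemma~\ref{charge2_lem}(B) to \eqref{sisteminobis} with the now-complete initial data $(\hat\varphi^{-a},\hat\varphi^{+a})$ produces a global $\varphi^{ia}$ on $\cH|_\cV$ whose $\varphi^{-a}$-component agrees with the previous step and whose $\varphi^{+a}$-component satisfies the prescribed restriction $\varphi^{+a}|_{\{I_2\}\times\cV}=\hat\varphi^{+a}$. Finally, writing $\varphi^a_b=(e^\psi)^a_b$, apply Lemma~\ref{charge2_lem}(B) once more to the linear inhomogeneous equation \eqref{sisteminoquater} with initial datum $\hat\psi^a_b(z)\equiv 0$, obtaining $\varphi^a_b|_{\{I_2\}\times\cV}=\delta^a_b$. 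Combining gives the desired global $\varphi$ on $\cU=\cH|_\cV\cdot\Sp_n(\bC)$.

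The main obstacle is the charge bookkeeping required to fit each equation into the hypotheses of Lemma~\ref{charge2_lem}. One must verify that the right-hand sides of \eqref{sisteminobis} and \eqref{sisteminoquater} depend on the point $(U,z)\in\cH|_\cV$ only through $U$ and the current unknown (with no residual explicit $z$-dependence after the substitution $z^{-a}\mapsto\varphi^{-a}=-u^-_j\varphi^{ja}$), and that each right-hand side has charge $2$ so that the integrability condition \eqref{charge2} holds. These verifications rest on the charge $+4$ of $\cL_{(+4)}$ being partitioned among the derivatives $\partial/\partial z^{-b}$ (contributing charge $-1$ each) and the $u^i_\pm$-factors, and are routine but must be carried out in full for each of the three invocations of Lemma~\ref{charge2_lem}.
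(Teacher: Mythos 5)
Your overall strategy -- exploit the decoupling of Remark \ref{remarkable}, fix $\varphi^{-a}$ first, then choose the initial datum of $\varphi^{+a}$, and finally solve \eqref{sisteminoquater} with $\wh\psi{}^a_b\equiv 0$ -- is close in spirit to the paper's, and the $\varphi^a_b$ step is correct. But the central step has a genuine gap: the assertion that the second application of Lemma \ref{charge2_lem}(B), with data $(\wh\varphi^{-a},\wh\varphi^{+a})$ on $\{I_2\}\times\cV$, yields a solution whose $\varphi^{-a}$-component agrees with the one from the first step. By the product rule, $H^o_{--}\cdot\varphi^{-a}+\varphi^{+a}=-u^-_j\,H^o_{--}\cdot\varphi^{ja}$, so at $U=I_2$ the constraint \eqref{strange_constraint} is exactly a prescription of the \emph{transverse} derivative $H^o_{--}\cdot\varphi^{2a}|_{(I_2,z)}=-c^{+a}$. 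The system \eqref{frob1} only controls derivatives along $H^o_0$ and $H^o_{++}$, so a global solution is \emph{not} determined by its restriction to $\{I_2\}\times\cV$ (already in the linear homogeneous charge-$(+1)$ case one may add $c(z)\,u^2_+$, which vanishes on $\{I_2\}\times\cV$ but has $H^o_{--}$-derivative $c(z)$ there); hence this transverse derivative is not a function of $\wh\varphi^{-a}$ alone and your "agreement" claim is unjustified. Worse, the particular solutions produced by Lemma \ref{charge2_lem}(B) satisfy the reality condition \eqref{reality_cond1} and therefore the third equation of \eqref{frob1-mod}, which \emph{locks} this derivative to $H^o_{--}\cdot\varphi^{ja}|_{(I_2,z)}=-F^{ja}(I_2,\wh\varphi(z))$. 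Taking the relevant component of \eqref{sisteminobis} one finds
$$
H^o_{--}\cdot\varphi^{-a}\big|_{(I_2,z)}
= -\,\wh\varphi^{+a}(z)\;-\;\o^{ab}\,\frac{\partial \cL_{(+4)}}{\partial z^{-b}}\Big|_{(I_2,\,\wh\varphi^{-c}(z))}\,,
$$
so $(\varphi^{+a}+H^o_{--}\cdot\varphi^{-a})|_{\{I_2\}\times\cV}$ is \emph{independent} of $\wh\varphi^{+a}$, and \eqref{strange_constraint} collapses to $\o^{ab}\,\partial\cL_{(+4)}/\partial z^{-b}(I_2,\wh\varphi^{-}(z))\equiv -c^{+a}$, a condition on $\cL_{(+4)}$ and $\wh\varphi^{-a}$ that fails for generic prepotentials. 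No choice of $\wh\varphi^{+a}$ breaks this circularity.

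The repair, which is the route the paper takes, is to leave the $-$-component untouched. Take any global solution $\wt\varphi$ of \eqref{secondsystem} (Lemma \ref{ic_lem}), set $\varphi^{-a}:=\wt\varphi^{-a}$, and add to $\wt\varphi^{+a}$ a global solution $\varphi'{}^{+a}$ of the \emph{homogeneous} linearisation of \eqref{sisteminoquater2} along $\wt\varphi^{-a}$; after reassembling $\varphi'{}^{ia}=u^i_+\varphi'{}^{+a}+u^i_-\varphi'{}^{-a}$ this is again a system covered by Lemma \ref{charge2_lem}(B), and one imposes the initial value $\varphi'{}^{+a}|_{\{I_2\}\times\cV}=(-H^o_{--}\cdot\wt\varphi^{-a}+c^{+a}-\wt\varphi^{+a})|_{\{I_2\}\times\cV}$. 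Since the correction does not alter $\varphi^{-a}$, the target $(-H^o_{--}\cdot\varphi^{-a}+c^{+a})|_{\{I_2\}\times\cV}$ is unchanged by the correction and \eqref{strange_constraint} follows directly. Your closing remarks on charge bookkeeping are fine but do not touch this issue.
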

 \begin{proof} 
 Let  $\wt \varphi = (\wt \varphi^i_\pm = u^i_\pm\,,\, \wt \varphi^a_b\,,\, \wt \varphi^{ia})$  
 be a   global solution  to \eqref{secondsystem} on an appropriate open set 
 $\cU = \cH|_{\cV} \cdot \Sp_n(\bC)$.  
 As shown in   the proof of  Lemma  \ref{ic_lem}, $\,\wt \varphi^{ia}\,$ and 
 $\,\wt \varphi^a_b\,$ are solutions to   \eqref{sisteminoquater1}, \eqref{sisteminoquater2} 
 and \eqref{sisteminoquater}.
We now consider the   {\it linear}  system for functions $\varphi'{}^{\pm a}$
\begin{align*}
& H_{++}^o \cdot \varphi'{}^{-a}  =0\ ,\\
&  H_{++}^o \cdot \varphi'{}^{+a}    
=     \varphi'{}^{+b} \o^{ac}
            \frac{\partial^2 \cL_{(+4)}}{\partial z^{-b} \partial z^{-c}}\Big|_{(u^i_{\pm}, (\wt \varphi^{- d}))} 
            +  \wt \varphi^{-a} 
\end{align*}
Writing  $\varphi'{}^{ia} := u^i_+ \varphi'{}^{+a} + u^i_- \varphi'{}^{-a}$, 
we obtain a system for functions $\varphi'{}^{ia}$   satisfying the hypotheses  
of Lemma \ref{charge2_lem}. Therefore (see Rem. \ref{remark_initial_data}) 
 there exists  a global  solution  $\varphi'{}^{ia}$ to  this system satisfying the 
 initial conditions 
$$
\varphi'{}^{+a}|_{\{I_2\} \times \cV} 
= \left(- H_{--}^o \cdot \wt \varphi^{-a} + c^{+a} - \wt \varphi^{+a}\right)\vert_{\{I_2\} \times \cV}  \ .
$$
Inserting
$$\varphi^{ia} := u^i_+( \wt \varphi^{+a} + \varphi'{}^{+a}) + u^i_- \wt \varphi^{-a} $$
in \eqref{sisteminoquater}, we choose a global solution    $\psi^a_b$ satisfying $\psi^a_b|_{\{I_2\} \times \cV} = 0$; it exists by Lemma \ref{lemma_split}.  A direct check shows that   $\varphi = (\varphi^i_\pm = u^i_\pm, \varphi^a_b = (e^\psi)^a_b, \varphi^{ia})$ is a global solution to \eqref{sisteminoquater1}, \eqref{sisteminoquater2} and \eqref{sisteminoquater}  and thus to 
 \eqref{secondsystem}. It  is
  defined on an appropriate open set 
and satisfies \eqref{strange_constraint}. 
\end{proof}

\begin{lem} \label{hk_lem}
 Let
 \begin{itemize}[itemsep=3pt, leftmargin=18pt]
 \item[i)] $H_{++} $ be the vector field defined by  \eqref{defH++}-\eqref{firstsystem-bis},
 \item[ii)]   $\varphi\colon \cU \subset \cP  \rightarrow \cU\subset \cP$,  with  $\varphi^i_{\pm} = u^i_\pm$,  be a   global solution  to    the differential equations \eqref{secondsystem}
on an appropriate open set $\cU = \cH|_{\cV} {\cdot} \Sp_n(\bC)$   satisfying  the condition   \eqref{strange_constraint} and 
\item[iii)]  
$H_{--} := \varphi_*(H^o_{--})\ ,\quad e_{+a} := e^o_{+a} \ ,\quad e_{-a} := [H_{--}, e^o_{+a}]\ .$
\end{itemize}
Then $\cA := \left(H^o_0, H_{\pm \pm},  E_A^o,  e_{\pm a} \right)$ is a canonical hk-frame with    v-potential  satisfying \eqref{spippolino}
and there exists a $4n$-dimensional real submanifold $M \subset \cU$ so that  $(\cA, M)$ 
 is  a canonical hk-pair   with bridge    $\varphi$. 
\end{lem}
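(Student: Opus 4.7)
My plan is to verify in turn that $\cA$ is an hk-frame (Def.\ \ref{hkframe}), that it is in canonical form (Def.\ \ref{canon_frame}) with v-potential satisfying \eqref{spippolino}, and finally to construct a real submanifold $M$ making $(\cA, M)$ a canonical hk-pair (Def.\ \ref{canon_hkpair}). I take the right $G$-action on $\cU$ to be the $\varphi$-conjugate of the standard right action on $\cP$; this yields conditions (a) and (c) of Def.\ \ref{hkframe} at once, and the bracket relations \eqref{algebra} follow by applying $\varphi_*$ to the corresponding brackets in $\gg$, using $H_{--} := \varphi_*(H^o_{--})$ by construction.

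Among the brackets \eqref{curv_constr}, the majority are routine: $[H_0, e_{+a}] = e_{+a}$ and $[E_A, e_{+a}] = (E^o_A)^b_a e_{+b}$ reduce to the flat brackets; $[H_{++}, e_{+a}] = [H_{++}, e^o_{+a}] = 0$ is guaranteed by the construction of $H_{++}$ via \eqref{defH++}--\eqref{firstsystem-bis}, precisely because this construction is designed so that the equations \eqref{potentials} hold; the brackets involving $e_{-a}$ follow by Jacobi, using $e_{-a} := [H_{--}, e^o_{+a}]$, $[H_{++}, H_{--}] = H_0$, and $[E_A, H_{--}] = \varphi_*[E^o_A, H^o_{--}] = 0$. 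The delicate identity is $[H_{--}, e_{-a}] = 0$. Set $Y := \varphi^{-1}_*([H_{--}, e_{-a}]) = [H^o_{--}, \varphi^{-1}_*(e_{-a})]$; by Jacobi and the already established identities, $Y$ has $H^o_0$-charge $-3$ and $[H^o_{++}, Y] = 0$. Expanding $Y$ pointwise in the global basis $(H^o_0, H^o_{\pm\pm}, E^o_A, e^o_{\pm a})$ of $T\cP$ with holomorphic coefficients, every coefficient function has strictly negative $H^o_0$-charge (ranging from $-5$ to $-1$); the equation $[H^o_{++}, Y] = 0$ produces a triangular system on these coefficients, and Lemma \ref{genlem}(i) together with the negative-charge constraint (a function $f$ with $H^o_{++} \cdot f = 0$ and $H^o_0 \cdot f = k f$, $k < 0$, must vanish) forces each coefficient to zero, so $Y = 0$. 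The remaining brackets $[e_{\pm a}, e_{\pm b}] = 0$ and the $E_A$-only form of $[e_{+a}, e_{-b}] = R^A_{ab} E_A$ are handled by analogous charge/Jacobi arguments.

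For the canonical form: $H_0, E_A, e_{+a}$ trivially match; $H_{++}$ satisfies \eqref{defH++} with $v^{+a}_{++}|_{\{z^{+a}=0\}} = 0$ by \eqref{firstsystem-bis}; the form of $H_{--} = \varphi_*(H^o_{--})$ follows from $\varphi^i_\pm = u^i_\pm$, which forces $\delta H_{--} := H_{--} - H^o_{--}$ to lie in $\Span_\bC\{e^o_{\pm b}, E^o_B\}$; and the vanishing of the $e^o_{-b}$-component of $\delta e_{-a}$ is precisely enforced on $\{I_2\}\times\cV$ by the special initial condition \eqref{strange_constraint} on $\varphi$, then propagates everywhere via the hk-frame bracket structure. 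The v-potential formula \eqref{spippolino} is immediate from the construction \eqref{occhio}. Finally, taking $\wh M$ to be a suitable real-analytic neighbourhood of $e$ inside $\{I_2\}\times\{I_{2n}\}\times(\bC^{4n})^\tau$ and setting $M := \varphi(\wh M)$, the transversality and tangency conditions of Def.\ \ref{hk-pair} are verified for the central hk-pair $(\varphi^{-1}_*(\cA), \wh M)$ and transported through $\varphi$; by construction $\varphi$ serves as the bridge realising $(\cA, M)$ as a canonical hk-pair. I expect the main obstacle to be the detailed triangular charge analysis forcing $Y = 0$ (hence $[H_{--}, e_{-a}] = 0$), together with the verification that the chosen $\wh M$ yields an honest central hk-pair whose image under $\varphi$ has the canonical structure.
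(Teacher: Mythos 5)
Your verification that $\cA$ is a canonical hk-frame follows essentially the paper's route: you pull everything back by $\varphi$, use $\varphi^i_\pm = u^i_\pm$ to kill the $H^o_0,H^o_{\pm\pm}$-components of $\varphi^{-1}_*(e_{\pm a})$, run the charge/triangularity argument for $[H^o_{--}, \varphi^{-1}_*(e_{-a})] = 0$ (the paper invokes Lemma \ref{lemma_split}\,a) rather than Lemma \ref{genlem}\,i), but the mechanism --- negative charge plus $H^o_{++}$-annihilation forces vanishing --- is identical), and you correctly identify \eqref{strange_constraint} as what forces the $e^o_{-c}$-components $v^{-c}_{-b}$ of $\delta e_{-b}$ to vanish on the initial slice and hence, by the charge-zero argument, everywhere. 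That part is sound, modulo the explicit computation \eqref{v-c-b} needed to check that \eqref{strange_constraint} really does produce the initial vanishing.

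The gap is in the last step, the construction of $M$. You take $\wh M$ to be an open neighbourhood of $e$ inside the flat real slice $\{I_2\}\times\{I_{2n}\}\times(\bC^{4n})^\tau$ and assert that $(\wh\cA, \wh M)$, with $\wh\cA = \varphi^{-1}_*(\cA)$, satisfies the conditions of Def.\ \ref{hk-pair}. But condition (ii) there requires $T_y\wh M \subset \a^{\wh\cA}_{(\bR)}(V^\tau + \gsp_{p,q})\vert_y$, and this distribution is built from the twisted fields $\wh e_{\pm a} = \varphi^{-1}_*(e_{\pm a})$, which carry nontrivial $e^o_{\pm b}$- and $E^o_B$-corrections. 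The flat slice is an integral manifold of $\a^{\cA^o}_{(\bR)}(V^\tau)$, not of this twisted distribution, so (ii) fails for it in general; note that Def.\ \ref{canon_hkpair}\,c) deliberately asks only for $\wh M \subset \{I_2\}\times\{I_{2n}\}\times\bC^{4n}$, i.e.\ a possibly curved totally real submanifold of the full complex slice. The paper instead forms the real distribution generated by $\a^{\wh\cA}_{(\bR)}(v)$, $v\in V^\tau$, and $\a^{\wh\cA}_{(\bR)}(\gsp_{p,q})$, projects it to $\cV$, checks involutivity using the bracket relations just established, and takes $M'$ to be a Frobenius integral leaf through $e$; only then does it set $M:=\varphi(M')$ and transport conditions (i) and (ii) through $\varphi$. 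You need this Frobenius step; the flat choice of $\wh M$ does not work.
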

\begin{proof} We first prove that $\cA$ is an hk-frame. Consider  the vector fields $\wh e_{\pm a} := \varphi^{-1}_*(e_{\pm a})$. By construction 
$\wh \cA = \left( H^o_0, H^o_{\pm \pm},  E_A^o,  \wh e_{\pm a} \right)$
is such that $\varphi_*(\wh \cA) = \cA$.  So   the proof that $\cA$ is an hk-frame     reduces    to  showing  that  $\wh \cA$ is an hk-frame. 

The  vector fields $\wh e_{\pm a}$ may be expressed in terms of the  flat hk-frame, 
$$
\wh e_{\pm a} = \wh v_{\pm a}^{+ b} e^o_{+ b} + \wh v_{\pm a}^{- b} e^o_{-b} 
                             +  \wh v^{++}_{\pm a} H^o_{++} +  \wh v^{0}_{\pm a} H^o_{0} 
                             +  \wh v^{--}_{\pm a} H^o_{--} + \wh A_{\pm a}^A E_A^o,
$$
where the components
$\wh v^{\pm b}_{\pm a}\,,\, \wh v^{\pm\pm}_{\pm a}\,,\, \wh v^0_{\pm a}\,,\, \wh A_{\pm a}^A$ are holomorphic functions. Since
$\varphi_*(E^o_A) = E^o_A$ and $\varphi_*(\wh e_{+a})   = e_{+a} = e^o_{+a}$, 
we have that      $d u^i_{\pm}(\varphi_*(\wh e_{+ a})) = 0$. 
Inserting the above expression  for $\wh e_{+a}$ we have
\begin{align*}
0  &=   d u^i_{\pm}\big( \wh v_{+ a}^{+ b} \varphi_*( e^o_{+ b})
                                     + \wh v_{+ a}^{- b} \varphi_*(e^o_{-b})   
                                      +     \wh v^{++}_{+ a} H^o_{++}  
                                       +    \wh v^{0}_{+ a}    H^o_{0} 
                                                + \wh v^{--}_{+ a} H^o_{--}  \big)   
 \\[4pt]
  &=    d u^i_{\pm}\big(  \wh v^{++}_{+ a} H^o_{++} 
                                    +  \wh v^{0}_{+ a}    H^o_{0} 
                                    + \wh v^{--}_{+ a}  H^o_{--}  \big)\ ,
 \quad \mbox{since}\  \varphi^i_\pm = u^i_\pm\ .
\end{align*} 
Since  $H^o_0\,,\, H^o_{++}$ and $H^o_{--}$ are linearly independent at each point,  
it follows that 
\beq \label{H_li} 
\wh v^{++}_{+ a} =  \wh v^0_{+ a}  =  \wh v^{--}_{+ a} = 0\ .
\eeq
Further, since  $\wh e_{-a} = \varphi^{-1}_*([H_{--}, e^o_{+a}]) = [H^o_{--}, \wh e_{+a}]$, we find   that  $\wh e_{-a}$  has no component along  $H^o_0 $ and $H^o_{\pm \pm}\,$, i.e.
\beq \label{e-hor} 
\wh v^{++}_{- a}  =  \wh v^0_{- a} =  \wh v^{--}_{- a} =0\ .
\eeq
We  now check that the Lie brackets between  $\wh e_{\pm a}$ and  the other fields in $\wh \cA$ have the  required form. By direct computation:
\begin{align*}
[ E^o_A, \wh e_{+ a}]  &= \varphi^{-1}_*([E^o_A, e^o_{+a}]) 
     = (E_A^o)^b_a \, \varphi^{-1}_*(  e^o_{+a}) =  (E_A^o)^b_a \,\wh e_{+b} 
\\[4pt]
[ E^o_A, \wh e_{- a}]  &= \varphi^{-1}_*([E^o_A, [H_{--}, e^o_{+a}]]) 
       =   (E_A^o)^b_a\,\varphi^{-1}_*([H_{--},  e^o_{+b}]) =  (E_A^o)^b_a\, \wh e_{-b}
\\[4pt]
[H^o_0, \wh e_{+a}] &= \varphi^{-1}_*([H^o_0,  e^o_{+a}])  = \wh e_{+a}
\\[4pt]
[H^o_0, \wh e_{-a}] &= \varphi^{-1}_*([H^o_0, [H_{--}, e^o_{+a}]]) 
     \\    &    = - 2 \varphi^{-1}_*( [H_{--}, e^o_{+a}]) + \varphi^{-1}_*( [H_{--}, e^o_{+a}]) 
     = - \wh e_{-a} 
 \\[4pt]
[H^o_{++}, \wh e_{+a}] &=  \varphi^{-1}_*([H_{++},  e^o_{+a}])  
      = - \varphi^{-1}_*((e^o_{+a} {\cdot} v^{\pm b}_{++})\, e^o_{\pm b} + (e^o_{+a} {\cdot} A_{++}^B)\, E^o_B) = 0\,.
\end{align*}
The last equality follows from \eqref{occhio} 
and  $e_{+a}^o {\cdot} \cL_{(+4)} = 0$. Further, 
$$
[H^o_{++}, \wh e_{-a}] = \varphi^{-1}_*([H_{++}, [H_{--}, e^o_{+a}]])  
               = [H^o_{++},  [H^o_{--}, \wh e_{+a}]]  = [H^o_0, \wh e_{+a}]  = \wh e_{+a}\ 
$$
and by construction $   [H^o_{--}, \wh e_{+a}]  = \wh e_{-a}$.
It remains to verify that $ X_{---a} := [H^o_{--}, \wh e_{-a}] =0$  and that 
$\wh Y_{ab} := [\wh e_{+a}, \wh e_{-b}]$  has terms only in the directions of the  $E^o_A$. 
Expanding   $X_{---a}$ in the vector fields of the flat hk-frame $\cA^o$, 
$$ 
X_{---a} = X_{---a}^{\pm b} e^o_{\pm b} 
                  + X_{---a}^0 H^o_0 + X_{---a}^{\pm\pm} H^o_{\pm\pm} 
                  + X^A_{--- a} E^o_A\ ,
$$
we see that since
$\ \ [H_0^o, X_{---a}] = - 2  [H^o_{--}, \wh e_{-a}]  - [H^o_{--}, \wh e_{-a}]  = - 3 X_{---a}\,$,  
each  component of $X_{---a}$ has a negative charge. 
Further, from the expansion in the flat hk-frame of the equality
\beq \label{H++commutator} 
[H_{++}^o, X_{---a}] = [H^o_{0}, \wh e_{-a}] +  [H^o_{--}, \wh e_{+a}] 
                                     = - \wh e_{-a} + \wh e_{-a} = 0\ ,
\eeq
we find  that 
$
H^o_{++} {\cdot} X_{---a}^{-b}  = H^o_{++} {\cdot} X_{---a}^{--}  
                                                         =  H^o_{++} {\cdot} X_{---a}^{A} = 0\,.
$
It follows from Lemma \ref{lemma_split} a)  that 
$X_{---a}^{-b} = X_{---a}^{--} = X_{---a}^{A} = 0$. 
Expanding  once again  \eqref{H++commutator} in the flat hk-frame and using the vanishing of these components, we get that 
$
H^o_{++} \cdot X_{---a}^{+b}  = H^o_{++} \cdot X_{---a}^{0}    = 0\,.
$
Lemma \ref{lemma_split} a)  then implies $X_{---a}^{+b} = X_{---a}^{0} = 0$ and we get that
the remaining component in the expansion of \eqref{H++commutator} gives  $X_{---a}^{++} = 0$. 
It follows that $X_{---a} =  [H^o_{--}, \wh e_{-a}]  = 0$, as required.

Now, in the image $\varphi(\cH\vert_\cV)\subset \cP$ we have:
\begin{align}
 H_{--}  &= \varphi_*(H_{--}^o) 
           = H^o_{--} + v_{--}^{+ a} e^o_{+ a} + v_{--}^{- a} e^o_{- a} + A_{--}^A E_A^o
\\
 \label{e-}
e_{-b}\ &=\ \varphi_*(\wh e_{-b})\  =  [H_{--},  e^o_{+b}] 
          =   e^o_{-b}  + v_{-b}^{-c} e^o_{-c} + v_{-b}^{+c} e^o_{+c} + A_{-b}^A E_A^o\,.
\end{align}
The components of these vector fields are:  
\begin{align*}  
v_{--}^{\pm a}|_{(U,  B,  z)} 
    &= -   (u^{\pm}_i (e^{-\psi})^a_c H_{--}^o \cdot \varphi^{ic})|_{\Phi(U,B, z)} 
\\   
A_{--}^B (E_B^o)^a_b|_{(U,B,  z)} 
 & = ( (e^{-\psi})^a_cH^o_{--} \cdot \varphi^c_b )|_{\Phi(U,B, z)}
 \\
v_{-b}^{+c}|_{(U,  B, z)}  
&=  (A_{--}^B (E_B^o)^c_b -e^o_{+b} \cdot v_{--}^{+ c})|_{(U,B,  z)} 
 \\ 
v_{-b}^{-c}|_{(U, B,z)}  &=  - e^o_{+b} \cdot v_{--}^{- c}|_{(U,  B, z)} 
\\ 
A_{-b}^B (E_B)^c_a|_{(U, B,z)} 
&=  (-  e^o_{+b} \cdot (A_{--}^B (E_B)^c_a))|_{(U,  B,z)}\,,
\end{align*}  
where we denote the inverse map of $\f$ by  
$\Phi = \f^{-1}$ and  write $(\f^a_b) = (e^\psi)^a_b$.
From this and \eqref{prima-bis} we see that $v^{-c}_{-b}$ is entirely determined 
by the map $\varphi$ as follows:
\beq
\label{v-c-b}
\begin{split}
 v^{-c}_{-b}|_{(U, B, z)} 
 &=   e^o_{+b}\vert_{(U,B,z)} {\cdot} (u^{\pm}_i (e^{-\psi})^a_c H_{--}^o \cdot \varphi^{ic}\vert_{\Phi(U,B,z)}) 
\\[3pt]
& = -\frac{\p ((e^{-\psi})^a_c(H_{--}^o  {\cdot} \varphi^{-c}
          + \varphi^{+c}))}{\p Y^M}\Big\vert_{\Phi(U,B,z)} \frac{\p \Phi^M}{\p z^{+b}}\Big|_{(U,B,z)} 
\\[3pt]
  & =  \Big((e^{-\psi})^a_c \frac{\p \psi^c_f}{\p B^d_e} (H_{--}^o  {\cdot} \varphi^{-f}
          + \varphi^{+f})\Big)\Big\vert_{\Phi(U,B,z)}  \frac{\p \Phi^d_e}{\p z^{+b}}\Big|_{(U,B,z)}  
\\[3pt]
&  \quad +\Big((e^{-\psi})^a_c   \frac{\p \psi^c_f}{\p z^{\pm d}} \big(H_{--}^o  {\cdot} \varphi^{-f}
          + \varphi^{+f}\big)\Big)\Big\vert_{\Phi(U,B,z)} \frac{\p \Phi^{\pm d}}{\p z^{+b}}\Big|_{(U,B,z)}
\\[3pt]
&  \quad - \Big( (e^{-\psi})^a_c \frac{\p\big(H_{--}^o  {\cdot} \varphi^{-c}
          + \varphi^{+c}\big)}{\p z^{\pm d}}\Big)\Big\vert_{\Phi(U,B,z)} \frac{\p \Phi^{\pm d}}{\p z^{+b}}\Big|_{(U,B,z)}  
\end{split}
\eeq
where   $(Y^M) = (u^i_\pm, B^a_b, z^{\pm})$.  
To proceed we need  the following technical lemma.

\begin{lem} \label{newlemma}  If $\varphi$ satisfies \eqref{strange_constraint},  
the components   $v_{-b}^{-c} $ of the vector field $e_{-b}$ are identically equal to $0$.
\end{lem}
\begin{proof}
Expanding the relation $\,e_{+b} =   [H_{++}, e_{-b}]\,$  in the flat basis, where $H_{++}$  is the vector field  in   \eqref{defH++}-\eqref{firstsystem-bis},  and using the $\gsp_n(\bC)$-equivariance
for the components of $e_{-b}$ implied by $\,[E^o_A, e_{-b}] = (E^o_A)^d_b e_{-d}\,$,
we have: 
\begin{align*}
e^o_{+b} 
&=    \left[  H_{++} \ ,\  e_{-b} \right] 
\\[2pt]
&  = \left[ H_{++} \ ,\  (\d^c_b + v^{-c}_{-b} )e^o_{-b} + v^{+ c}_{+b}\ e^o_{+ c}  
                                                                                 +A^B_{-b}\  E^o_B \right]  
\\[2pt]
& = (\d^c_b + v^{-c}_{-b} )  \left[ H_{++} \ ,\  e^o_{-c} \right]    
       + H_{++} {\cdot} v^{\pm c}_{-b}\  e^o_{\pm c} 
       + H_{++} {\cdot}A^B_{-b}\, E^o_B 
 \\[2pt]
& = (\d^c_b + v^{-c}_{-b} ) 
       \left[H^o_{++} + v_{++}^{\pm d} e^o_{\pm d} + A_{++}^C E^o_C \ ,\ e^o_{-c}\right]    
                \mod  \langle e^o_{+c} , E^o_B \rangle
\\[2pt]
&=  
\left(H_{++} \cdot v^{- c}_{-b} + (\d^d_b + v^{-d}_{-b} ) (A^{C}_{++}(E^o_C)^c_d - e^o_{-d} \cdot v^{-c}_{++}) \right) e^o_{- c}  
              \mod  \langle e^o_{+c} , E^o_B \rangle .
\end{align*}
Now, from \eqref{occhio}, we have that 
$$ 
A^{C}_{++}(E^o_C)^c_d = e^o_{-d} \cdot v^{-c}_{++}\,,
$$
so the remaining components in the $e^o_{- c}$-directions imply that
$(H_{++} {\cdot} v^{- c}_{-b})\circ \varphi  =  H^o_{++} {\cdot} (v^{-c}_{-b} \circ \varphi) = 0$. Since $\,v^{- c}_{-b}$  has charge zero, we have  
$(H^o_0 {\cdot} v^{- c}_{-b})\circ \varphi = H^o_0 {\cdot} (v^{- c}_{-b}\circ \varphi) = 0$
and  the conditions for the applicability of Lemma \ref{lemma_split} 
hold for $v^{-c}_{-b} \circ \varphi$. We deduce that  $v^{-c}_{-b}$ is constant 
along orbits  of $H^o_0, H_{++}$  and $H_{--}\,$,  the images  under the map $\varphi$
of the orbits of $\Sp_1(\bC)$ in harmonic space. 
Thus, $v^{-c}_{-b} = 0$ everywhere on $\cU$ if and only if   
 $\ v_{-b}^{-c}|_{\f(\{I_2\} \times \{I_{2n}\} \times \cV)} = 0$. 
 From \eqref{v-c-b},    this follows  if and only if, for any $z \in \cV$, 
 \begin{multline*}
0 =  \Big( \frac{\p \psi^c_f}{\p B^d_e} (H_{--}^o  {\cdot} \varphi^{-f} + \varphi^{+f})
                                                    \Big)\Big \vert_{(I_2,I_{2n},z)}  
                                       \frac{\p \Phi^d_e}{\p z^{+b}}\Big|_{\f(I_2,I_{2n},z)} 
 \\    
\quad + \Big(   \frac{\p \psi^c_f}{\p z^{\pm d}} (H_{--}^o  {\cdot} \varphi^{-f}+ \varphi^{+f})
             \Big)\Big\vert_{(I_2,I_{2n},z)} 
          \frac{\p \Phi^{\pm d}}{\p z^{+b}}\Big|_{\f(I_2,I_{2n},z)}
\\    
\quad -\Big(  \frac{\p (H_{--}^o  {\cdot} \varphi^{-c} + \varphi^{+c} )}
                                              {\p z^{\pm d}}\Big)\Big \vert_{(I_2, I_{2n}, z)} 
       \frac{\p \Phi^{\pm d}}{\p z^{+b}}\Big|_{\f(I_2,I_{2n},z)}  
 \end{multline*}
This holds since  the initial data satisfy \eqref{strange_constraint}.  
\end{proof}
Since the functions  $v^{-c}_{-b}$ are identically vanishing, the vector fields $e_{-b}$ 
have the form $e_{-b} = e^o_{-b} + v_{-b}^{+c} e^o_{+c} + A^B_{-b} E^o_B$. 
It follows that 
$$Y_{ab} := \varphi_*(\wh Y_{ab}) =[e^o_{+a} ,  e_{-b}] 
=T^{+ c}_{ab} e^o_{+ c} + R_{ab}^B E^o_B\,,
$$   
with 
$T^{+c}_{ab}=  e^o_{+a}{\cdot} v_{-b}^{+c} - A^B_{-b} (E^o_B)^c_a$ and 
$R_{ab}^B = e^o_{+a}{\cdot} A^B_{-b}$.
From  the equations $[H^o_0, [e_{+a}, e_{-b}] ] = 0\ $ and 
$\ [H^o_{++}, [e_{+a}, e_{-b}]] = 0 $, 
we  see that  $T^{+ c}_{ab}$  has charge $-1$ and  that
$\ H_{++} {\cdot} T^{+c}_{ab} = 0$.
By  Lemma \ref{lemma_split}a), applied to the functions $\wt T^{+c}_{ab}:= \varphi^*(T^{+c}_{ab})$,  
it follows that $T^{+c}_{ab} = 0$ and that 
$\wh Y_{ab} = [\wh e_{+a}, \wh e_{-b}] = \varphi^{-1}_*([e_{+a}, e_{-b}])$   
has terms only in the directions of the  $E^o_A$, as required. 
This concludes the proof that  $\cA$ is an hk-frame.

We now observe that the above construction, together with \eqref{e-} and Lemma \ref{newlemma}, 
shows that  $\cA$  is in fact canonical and that $\f$ is a bridge from the central hk-frame $\wh \cA = \f^{-1}_*(\cA)$ to $\cA$. It therefore remains to show that there exists a $4n$-dimensional 
real submanifold $M \subset \cU$ such that  $(\cA, M)$ is a canonical hk-pair.

Let us consider the distribution $\cD \subset T \cU$,  generated by  
$$
\a^{\wh \cA}_{(\bR)}(v)_x\ ,\ v \in V^\t \ ,\  x \in \cU\,,
\quad\text{and}  \quad \a^{\wh \cA}_{(\bR)}(E)\ ,\ E \in \gsp_n = \gsp_n(\bC)^\t , 
$$
where  $\a^{\wh \cA}_{(\bR)}$ is  the real absolute parallelism \eqref{reality} associated with ${\wh \cA}$.
Its  image   $\cD' = \pi_*(\cD) \subset T \cV$ under the natural projection  
$\pi\colon\cU \to \cV$ $  \simeq \{I_2\} \times \{I_{2n}\} \times \cV$ is  a totally real, $4n$-dimensional  distribution and it is 
is involutive by virtue of the   Lie brackets of  vector fields in  $\cA$.  
By  Frobenius' Theorem, $\cD'$ admits integral submanifolds.   
Let $M' \subset  \{I_2\} \times \{I_{2n}\} \times \cV$  be an integral submanifold  through $e =(I_2, I_2, 0)$. 
We now   show  that $M:= \varphi(M')$ is totally real and satisfies  the conditions of Def.\ \ref{hk-pair}. 

$M$ is totally real  because it is the image under a biholomorphism of a totally real submanifold. 
Condition (i) of Def.\ \ref{hk-pair} holds because $\varphi$ is $\gsp_n(\bC)$-equivariant and  $M'$ 
is transversal to the $\gsp_n(\bC)$-orbits. Finally, from \eqref{H_li} and \eqref{e-hor}, 
the real absolute parallelism  $\a^{\cA}_{(\bR)}$   associated with $ \cA = \varphi_*(\wh \cA)$ 
is such that   for any $x = \varphi(y) \in M = \varphi(M')$ and $v \in V^\t$
\begin{align*}
\a^{\cA}_{(\bR)}(v)_x   = \varphi_*( \a^{\wh \cA}_{(\bR)}(v)_y) 
 \in\ & \varphi_*\big( T_y M'  +  \a^{\wh\cA}_{(\bR)}(\gsp_{p,q})\vert _y\big) \\
& =  T_x M +   \varphi_*\big(\a^{\wh\cA}_{(\bR)}(\gsp_{p,q})|_y\big) \\
& = T_x M +  \a^{\cA}_{(\bR)}(\gsp_{p,q})|_x\ . 
\end{align*}
Hence condition (ii)  of Def.\ \ref{hk-pair}  holds as well.  This  concludes the proof that $(\cA, M)$ is a canonical  hk-pair.
\end{proof}

\begin{rem} The role of condition  \eqref{strange_constraint}  in this proof is merely 
to simplify the proof of the existence of a submanifold on which the functions $v^{-a}_{-b}$ vanish. 
Indeed, the argument at the end of Sect.\ \ref{pf_vpot_thm} shows that for every pair
of solutions $\varphi, \varphi'$ of \eqref{secondsystem}, the vector fields 
$H_{--} = \varphi_*(H^o_{--})$ and $H'_{--} = \varphi'_*(H^o_{--})$ necessarily coincide.  
This   means  that {\it any} solution $\varphi$ of \eqref{secondsystem}, 
not necessarily satisfying \eqref{strange_constraint}, can be used to construct 
the (unique) vector fields $H_{--}$ and $e_{-a} = [H_{--}, e^o_{+a}]$  required to complete
the  vector fields $H^o_0, H_{++}, E^o_A, e^o_{+a}$ to a canonical hk-frame.  
\end{rem}

\goodbreak
\noindent
{\bf Step 2: Existence of a prepotential for  any canonical hk-pair}

\noindent
Let $\cA= (H^o_0, H_{\pm\pm},  E^o_A, e_{\pm a})$ be a canonical  hk-frame 
defined  on an appropriate open subset  $\cU \subset \cP$. 
It follows from \eqref{algebra} and \eqref{curv_constr}  that 
the   maps $(v^{-a}_{++}) , (v^{+a}_{\pm\pm}), (v^{+b}_{- a})$ 
and $(A^B_{-a} (E^o)^b_{c})$, 
considered as components  taking values in   $V = \bC^{2n} ,\, \ggl_{2n}(V)$ and 
$V {\otimes} V^* {\otimes} V^*$, respectively,    are $\gsp_n(\bC)$-equivariant.
Further,  we have
\begin{align*}
[H_{++} , e_{+a}]  
&=  [H^o_{++} + v^{\pm b}_{++} e^o_{\pm b}  + A_{++}^B E^o_B, e^o_{+a}] = 0 \\
 [H_{++} , e_{-a}] 
 &=   [H^o_{++} + v^{\pm b}_{++} e^o_{\pm b}  +  A_{++}^B E^o_B, e^o_{-a}  
              + v_{-a}^{+ c} e_{+ c}^o + A^B_{-a} E^o_B]  = e^o_{+a}\ .
\end{align*}
Comparing   components along $e^o_{-b}$ on both sides of  
these equations, we see  that 
$\ e^o_{+a} \cdot v^{- b}_{++}  = 0\ $ and 
$\  e^o_{-a} \cdot v^{-b}_{++}  
= A_{++}^A (E^o_A)^b_a \in \gsp_n(\bC)$, 
or equivalently,  
 \beq \label{symplectic1}
 \o_{cb}\, e^o_{-a}\cdot v^{-c}_{++} \ -\o_{c a}\, e^o_{-b}\cdot v^{-c}_{++}  = 0\,. 
\eeq
This means  that  on  $\cH|_{\cV} = \cU \cap \cH$, we have 
\beq \frac{\partial (\o_{cb}v^{- b}_{++}) }{\partial z^{+a}} = 0\ ,\qquad \frac{\partial (\o_{cb}v^{- b}_{++}) }{\partial z^{-a}} = \frac{\partial (\o_{ab}v^{- b}_{++}) }{\partial z^{-c}}\ , \eeq
so that   there exists a holomorphic  prepotential $\cL_{(+4)}$ of charge $k = 4$ and  independent of  $z^{+a}$,  such that 
\beq 
v^{-c}_{++} =\o^{bc} \frac{\partial \cL_{(+4)}}{\partial z^{-b}}\, ,\quad \ 
(\o^{ab}) = (\o_{ab})^{-1}\ .
\eeq
This prepotential is determined up to an arbitrary function depending only on $u^i_\pm$, 
which is fixed by  imposing the initial value 
$\,\cL_{(+4)}|_{\Sp_1(\bC) \times \{I_{2n}\} \times \{0\}} = 0$.

\section{Construction of a pseudo-hyperk\"ahler metric from its prepotential}
\label{reconstruct}
\setcounter{equation}{0}
\noindent 
In this section we summarise the correspondence between prepotentials and metrics,  giving a recipe
 to construct a   real analytic pseudo-hyperk\"ahler metric from a specified prepotential   
$\cL_{(+4)}\colon \cH|_{\cV} \rightarrow \bC$.
 
 \smallskip
 \noindent{\it {\bf Step 1.}  Construct  the vector field $H_{++}|_{\cH}$}. 
 
\smallskip 
\noindent The vector field $H_{++}$ of the canonical hk-pair, corresponding to $\cL_{(+4)}$ is 
of the form
$ H_{++} = H^o_{++} + v^{-b}_{++} e^o_{-b} +  v^{+b}_{++} e^o_{+b} + A_{++}^B E^o_B$.
The components of its restriction $H_{++}|_{\cH|_\cV} $  are given by 
 $$
 \left.v^{-b}_{++}\right|_{\cH} =\o^{bc}  \frac{\partial \cL_{(+4)}}{\partial z^{-c}},\  \ 
  v^{+a}_{++} = \o^{ac}  \frac{\partial^2 \cL_{(+4)}}{\partial z^{-b} \partial z^{-c}} z^{+b},\  \ 
   \left. A^B_{++} (E^o_B)^a_b\right|_{\cH} = 
     \o^{ac}  \frac{\partial^2 \cL_{(+4)}}{\partial z^{-b} \partial z^{-c}}\,.
$$
The components at  other points of $\cP|_{\cV} = \cH|_{\cV} \cdot \Sp_n(\bC)$ are  determined  using $\Sp_n(\bC)$-equivariance.

\goodbreak\vfil
 \noindent{\it {\bf Step 2.}   
 Construct a   bridge  $\varphi$}. 
 
\noindent 
Determine holomorphic functions 
$\varphi^a_b$ and $\varphi^{\pm a}$ on   $\cH|_\cV$ by  solving the system of equations
\begin{align*}
\nonumber H_{++}^o \cdot \varphi^{-a}  &=
\o^{ab} \left. \frac{\partial \cL_{(+4)}}{\partial z^{-b}}\right|_{(u^i_{\pm}, \varphi^{- c})}
\\[3pt]
 H_{++}^o \cdot \varphi^{+a}    &=  
   \varphi^{+b} \o^{ac}\left.\frac{\partial^2 \cL_{(+4)}}{\partial z^{-b} \partial z^{-c}}\right|_{(u^i_{\pm}, (\varphi^{- d}))} 
 + \varphi^{-a}  
\\[3pt]
H_{++}^o \cdot \varphi^a_b &=    
\o^{ac}   \varphi^d_b \left.\frac{\partial^2 \cL_{(+4)}}{\partial z^{-c} \partial z^{-d}}\right|_{(u^i_{\pm}, (\varphi^{- a}))},
\end{align*}
with $\varphi^a_b(I_2, 0) = \d^a_b$, $\varphi^{\pm a}(I_2, 0) = 0$.
Then extend  $\varphi^{i a} = - u^i_+ \varphi^{+a} - u^i_- \varphi^{-a} $, as constant functions along $\Sp_n(\bC)$ orbits,  to   the appropriate open subset 
$\cU = \cH|_{\cV} \cdot \Sp_n(\bC)\subset \cP$ and extend the $\varphi^a_b|_{\cH|_\cV}$ to $\cU$ using  \eqref{phi_trfn}.  Now set  $\f = (\f^i_\pm = u^i_\pm, \f^a_b, \f^{ia})$.

\smallskip 
\noindent{\it {\bf Step 3.} Construct the hk-frame $\cA$}.

\noindent 
Set: 
 $H_0 = H^o_0\ ,\ \ H_{--} = \varphi_*(H^o_{--})\ ,\ \ 
 E_A = A^o_A\ ,\ \ e_{+a} = e^o_{+a}\ ,\ \ 
 e_{-a} = [H_{--}, e^o_{+a}]$. 
 
 \smallskip
 \noindent{\it {\bf Step 4.}  Determine the manifold $M$ for the hk-pair $(\cA, M)$}.
 
\noindent 
Consider the  integrable distribution $\cD'$ on $ \cV \subset \bC^{4n}$ spanned 
by the real and imaginary parts of the vectors 
\[
\begin{split}
\wh e^{(U)}_{a}|_z  
&:= (\pi \circ \varphi^{-1})_*(e^o_{+ a} |_{(U,z)}  +  \wh \bJ_c^d e_{- d}|_{(U,z)})\\ 
 \wh e^{(U)}_{a + 2n }|_z  
&:= (\pi \circ \varphi^{-1})_*(e_{- a}|_{(U,z)}  -  \wh \bJ_c^d e^o_{+ d})|_{_{(U,z)}})\ , 
\end{split}
\]
where  $(\wh \bJ_c^d) :=   
- I_{2p, 2q} {\cdot} \bigl( \begin{smallmatrix}  0&- I_n \\ I_n &0 \end{smallmatrix}\bigr) $, 
 $\ U \in \Sp_1(\bC)$ and $\pi\colon\cH|_{\cV} \to  \cV$ is the standard projection. 
We need to find  an integral submanifold $M' $  of   $\cD'$ through   $0 \in \cV$.
 This can be done, for instance, by choosing vector fields which locally   generate $\cD'$ 
 and considering an orbit of $0$ under the flows of these vector fields. 
 Then, set   $M = \varphi(M')$, where $M'$ is considered as a submanifold of 
 $\{I_2\} \times \{I_{2n}\} \times \cV \simeq \cV$. 

\smallskip  
 \noindent{\it {\bf Step 5.}    Construct  the pseudo-hyperk\"ahler metric}.

 \noindent
Find   the dual coframe field   $\cA^* = (H^0, H^{\pm\pm}, E^A, e^{\pm a})$ of 
the hk-frame $\cA = (H_0, H_{\pm\pm}, E_A, e_{\pm a})$. 
A   pseudo-hyperk\"ahler metric on $M$ in the isometry  class   associated with $\cL_{(+4)}$ is: 
$$
g = \sum_{a = 1}^{2n}\left.\left( e^{+a} \vee  e^{-a}\right)\right|_{T  M \times T M}\ .
$$

\medskip
To conclude this section, we summarise  the inverse construction of a prepotential 
from a given pseudo-hyper\-k\"ahler metric $g$. 
In order to determine this (non-unique) prepotential, we first  construct 
an hk-pair $(\wh \cA, M')$ associated with $g$, 
following the procedure in the proof of Lemma \Aref{appr_lem}.  
Here $\wh \cA {=} (H^o_0, H^o_{\pm\pm}, E^o_A, e_{\pm a})$ is a central  hk-frame 
on  $\cU = \cH|_{\cV}\, {\cdot}\,\Sp_n(\bC) \subset \cP$ and 
$M' \subset \{I_2\} {\times} \{ I_{2n}\} {\times} \cV$ with $e = (I_2, I_{2n}, 0) \in M'$.   Applying  a local 
biholomorphism of $\bC^{4n}$ if required, we may choose  $\wh \cA$ so that  
$e_{-a}|_{\cV} =\frac{\partial}{\partial z^{-a}}\big|_{\cV} {+} A_{-a}^B E^o_B|_{\cV}$. 
Then, we construct a  bridge  
$\varphi = (\varphi^i_\pm = u^i_\pm, \varphi^a_b , \varphi^{\pm a} 
= - u^{\pm}_i \varphi^{ia})$ with $\varphi(e) = e$ by solving  the differential problem
\begin{align*}
H^o_0 \cdot \varphi^{- a} &= - \varphi^{- a},   
&e_{-a} \cdot \varphi^{-a} &= \f^a_b \,, 
& e_{+a} \cdot \varphi^{- b} &= 0\ ,
\\
H^o_0 \cdot \varphi^{+ a} &=  \varphi^{+a},   
&e_{+a} \cdot \varphi^{+ b} &= \f^a_b\,, 
&H_0^o \cdot \varphi^a_b &=  0\, , 
\end{align*}
together with the conditions:
 \begin{itemize}[itemsep=6pt, leftmargin=24pt]
 \item[i)] $\varphi^{ia}$ is independent  of  $B^a_b$ 
 \item[ii)] $ \varphi^a_b\left(U, B, z\right) = \varphi^a_c\left(U,I_{2n}, z\right)B^c_b$ for any $(U, B, z) \in \cU$
 \item[iii)] $H_{++}^o {\cdot} \varphi^{+ b}|_{\{z^{+a} = 0\}}= \varphi^{-b}|_{\{z^{+a} = 0\}}$.
 \end{itemize}
 Computing $H_{++}:= \varphi_*(H^o_{++})$, the restriction to $\cH|_\cV$  
 of the components $v^{-b}_{++}$ in the expansion
 $H_{++} = H^o_{++} + v^{\pm b}_{++} e^o_{\pm b} + A_{++}^B E^o_B$ 
 gives the v-potential of the metric.
 Finally,   the $z^{+a}$-independent potential $\cL_{(+4)}$  for the exact 1-form 
 $\a := \o_{ab} v^{-b}_{++} dz^{-a}$ on $\cH|_{\cV}$, with  
 $\cL_{(+4)}|_{\Sp_1(\bC) \times \{I_{2n}\} \times \{0\}} = 0$,  
 is the required prepotential.

\appendix

\makeatletter
\def\subsection{\@startsection{subsection}{3}%
  \z@{.5\linespacing\@plus.7\linespacing}{.1\linespacing}%
  {\normalfont\bfseries}}
\makeatother

\makeatletter
\def\subsubsection{\@startsection{subsubsection}{3}%
  \z@{.5\linespacing\@plus.7\linespacing}{.1\linespacing}%
  {\normalfont\bfseries}}
\makeatother

\renewcommand*\thesubsection{\Alph{section}\arabic{subsection}}
\renewcommand*{\theequation}{\thesection\arabic{equation}}
\renewcommand*{\thedefinition}{\thesection\arabic{definition}}

\section{$G$-structures and pseudo-hyperk\"ahler manifolds}
\setcounter{equation}{0}
 \noindent 
In this appendix  we introduce real and complex $\gg$-structures, local 
reformulations of $G$-structures  in terms of  vector  fields.
They provide a useful tool for the investigation of local properties of 
manifolds with  real analytic $G$-structures.

In Sect.\ \ref{G-str} we show that there exists a  natural  one-to-one correspondence 
between  local equivalence classes of  (a) $G$-structures with connections and 
(b) complete $\gg$-structures.
This  correspondence allows the formulation of questions on local equivalences 
of $G$-structures in terms of local equivalence problems among
sets of vector fields.
 
We then discuss (Sect.\ \ref{complex}) 
{\it complexifications of real  $G$-structures} and  {\it real forms 
of complex $G$-structures},  with a view to expressing problems  of  
equivalence among real analytic $G$-structures in terms of 
holomorphic vector fields. 
In  Sect.\ \ref{appendixHK} we discuss the particular case of   $G$-structures 
corresponding to real analytic pseudo-hyperk\"ahler metrics.  
As our main result  in Sect.\ \ref{bijection}, we prove the bijection  between 
local isometry classes  of real analytic pseudo-hyperk\"ahler metrics and 
local equivalence classes of hk-pairs, which was advertised  in 
Sect.\ \ref{hkpairs-metrics}.

\subsection {$G$-structures and associated  $\gg$-structures} 
\label{G-str}
\noindent 
We start with a slight generalisation of the classical notion  of  a $G$-structure.

\begin{Adef} \label{Gstr} 
Let $G$ be a real Lie group  admitting an almost exact linear representation  
$\,\r\colon G \,{\to}\, \GL(W)\,,\,W = \bR^n$, i.e.\  $\ker \r$ is a discrete normal subgroup. 
A \hbox{\bss $G$-structure} $(P,\q)$ on an $n$-dimensional  manifold $M$ is 
a principal $G$-bundle $\pi\colon P \,{\rightarrow}\, M$ together with 
a {\bss soldering form} $\q \colon TP \,{\rightarrow}\, W$, a $G$-equivariant  $W$-valued 
1-form which is strictly horizontal, namely   the vertical distribution $T^vP$ of $P$ is such that  
$T^v_u P=\ker \q_u$ for any $u \in P$.
\end{Adef}

\begin{Arem} \label{G-remark} 
This  definition  can be thought of as a  a minor generalisation of the classical notion  
of  a $G$-structure  as  a $G$-reduction of the linear frame bundle $L(M)$ of  $M$  
(see e.g.\,\cite{KN, St}).  Various examples  motivate this generalisation.
In particular, the  $\Spin_n$-bundle of  a Riemannian manifold $(M,g)$
is not   a $G$-structure in the classical sense, but is indeed a  
$\Spin_n$-structure in the sense of Def.\ \Aref{Gstr}.
The relation between the two definitions may be understood as follows.
Let $(e^o_i)$ be a fixed basis  of $W$ and choose a point $u \in P$.  
Then find  $n$ vectors $\wh e_i \in T_u P$ satisfying the equations  
$\q_u(\wh e_i) = e^o_i\,$. These vectors are determined up to elements in 
$\ker \q = T^v P$, so that  their projections   $e_i := \pi_*(\wh e_i) \in  T_{\pi(u)} M$,
 are uniquely associated  with  the  point $u \in P$. 
Thus, there exists  a well-defined map  
$$
\wt p\colon  P \rightarrow L(M)  \ ,\quad  u \mapsto  \wt p(u) := (e_i) \subset T_{\pi(u)} M\,.
$$
In virtue of  the $G$-equivariance of $\q$, we may check that  $\wt p$ is  $G$-equivariant, 
namely that $\wt p(u {\cdot} g) = \wt p(u) {\cdot} \rho(g)$, for $u \in P$ and  $g\in G$. 
This property,  together with the assumption that $\r\colon G \rightarrow  \GL(W)$  is 
almost exact, implies that $P' := \wt p(P) \subset L(M)$ is a $\r(G)$-reduction of $L(M)$,  
that $p\colon P \to P',\,  p(u) := \wt p(u)$,  is a covering map and that $\q =  p^*(\q')$, where $\q'$ is the 
soldering form of $P'$.

Summing up, a   {\em $G$-structure}, as defined in Def.\,\Aref{Gstr}, 
{\em always admits   a covering  map $p\colon P \to P'$ onto  a 
$\r(G)$-reduction $P'$ of  the linear frame bundle $L(M)$  such that
$\q$ is  the pullback, $\q = p^*(\q')$, of the soldering form $\q'$ of $P' \,{\subset}\, L(M)$}. 
\end{Arem}

We recall that a connection on a principal $G$-bundle  $ \pi \colon P \rightarrow M = P/G\ $
is a \hbox{$G$-equivariant} $\mathfrak{g}$-valued 1-form 
$\omega \colon TP \rightarrow \mathfrak{g} = \Lie(G)$, 
for which the restriction  $\omega|_{T^v_u P}$  to any vertical subspace $T^v_uP$  
coincides with the inverse of the canonical identification 
$\nu \colon \mathfrak{g} \rightarrow T^{v}_u P$ between  $\mathfrak{g}$ and  $T^{v}_uP$.
A connection $\omega \colon TP \rightarrow \gg$ on a  $G$-structure 
$\,(\pi \colon P \rightarrow M\,,\, \q)\,$ yields  a Cartan connection,  
\beq \label{cartan}
\kappa :=  \omega + \q\ ,\quad
\kappa  \colon TP \rightarrow \gq := \gg + W\ ,
\eeq
namely a $\gq$-valued $1$-form which  a)  has  trivial kernel,  
b) extends   the natural isomorphism  
$ \nu^{-1}\colon T^v_u P \rightarrow \gg$ at every $u\in P$ and
c)  satisfies  $(R_g^* \kappa)(v) = \Ad_{g^{-1}} ( \kappa(v))$ 
for  $g \in G$ and $v \in TP$,  where  $R_g\colon P \to P$ is 
the right action of $g$ on $P$. 
The notion of a Cartan connection is related to the following:

\begin{Adef}  
An {\bss absolute parallelism} on an $n$-dimensional manifold  $N$ is  
an  $\bR$-linear map from a fixed $n$-dimensional real vector space, 
say $\bR^n$,  into the space of smooth vector fields $\gX(N)$, 
$$
\b\colon   \bR^n \rightarrow \gX(N) \,,
$$
with the property that  the induced map 
$\b_x\colon \bR^n \to T_x N\ ,\  \b_x(v) := \b(v)|_x$, 
is an isomorphism of vector spaces for each $x \in N$.
\end{Adef}

\noindent
The existence of an absolute parallelism $\b$ on $N$  is equivalent to  
the existence of  a set  $\{X_1, \ldots, X_n\}$ of $n$ vector fields 
$X_i \in \gX(N)$,  such that for every $x \in N$ the vectors 
$X_1|_x, \ldots, X_n|_x$  form a basis of $T_xN$.  In fact, for a given $\b$,  such  vector fields   are  images
$X_i = \b(e_i^o)$ of the elements of    some  basis $(e_i^o)$ of $\bR^n$.

It follows  immediately  that  if $\kappa$ is the $\gq$-valued one-form \eqref{cartan},
the   $\bR$-linear  map
\beq \label{alpha}
\a \colon \gq =  \gg +W\rightarrow  \gX(P)\,,\quad 
\a(X)|_u := \k_u^{-1}(X)\quad  \mbox{for}\  u\in P \  , \  X\in \gq\,, 
\eeq
 is an absolute parallelism on $P$. 
So, if  $(e^o_1, \dots, e^o_n)$ and $(E^o_1, \dots, E^o_N)$ are two  fixed bases 
for  $W$  and $\gg$, respectively,  the absolute parallelism $\alpha$ and 
consequently $\k$ (which is the inverse of $\a$ in the sense of \eqref{alpha}) 
are uniquely determined by  the corresponding set of vector fields  
$ \cA = ( e_i =  \a(e^o_i),  E_A =  \a(E^o_A))$, 
which provides a field of linear  frames for the tangent spaces of  $P$.
The absolute parallelism $ \a$,  constructed   from a connection on a 
$G$-structure $(P,\q)$, is an example  of special class of absolute parallelisms, 
which we call  {\it $\gg$-structures}.
Let $\gg \subset \ggl(W)$ be a {\it real} linear Lie algebra and
$\gq := \gg + W$ the associated nonhomogeneous Lie algebra,  with  
$[W, W] = 0$ and $[A\,,\, v] = A {\cdot} v$ for  $A \in \gg$ and $v \in W$.  

\begin{Adef} \label{gg-structure} 
Let $P$ be a  manifold  with $\dim P = \dim \gq$.
A {\bss $\gg$-structure} is an  absolute parallelism
$\,  \alpha \colon  \gq \rightarrow \gX(P)\,,\,  \gq:= \gg + W$, satisfying  the 
following Lie bracket relations:
\beq \label{gg-str} 
 [ \alpha(A),  \alpha(B)] =  \alpha([A,B])\  , \qquad  A \in \gg\ ,\  B\in \gq\,.
\eeq
Two $\gg$-structures  $  \alpha,  \alpha'$ on  $P$ are  {\bss equivalent} if 
there exists a diffeomorphism $\varphi$ of $P$ such that $ \a' = \varphi_* \circ  \a$.
The  {\bss vertical} and  {\bss horizontal distributions} of a $\gg$-structure are the 
distributions  $\gV$ and $\gH $ in $TP$  generated by  $\a(\gg)$  and $\a(W)$, respectively.  
\end{Adef}

The  brackets not included in \eqref{gg-str}, between a pair of  horizontal 
vector fields  $\a(v)$ and  $\a(v')\,,\, v, v' \in W $,   take the form
\beq 
[ \alpha(v),  \alpha(v')] = T(v,v') + R(v,v')\ ,\qquad v, v' \in W\ ,
\eeq
where we denote by $T(v,v')$  and $R(v,v')$  the  components  of 
$[ \alpha(v),  \alpha(v')] $ along  $\gH$ and $\gV$ respectively. 
The  maps 
\begin{align*}
T_u \in \Hom(\L^2 W, W) \, ,\quad  T_u(v, v') &:= \a_u^{-1}(T(v, v')|_u)
\\
R_u \in \Hom(\L^2W, \gg) \,,\ \quad  R_u(v, v') &:= \a_u^{-1}(R(v, v')|_u)\,,
\end{align*}
are  respectively called  {\bss torsion\/}  and {\bss  curvature\/}  of  the
$\gg$-structure $\a$ at $u\in P$.
These  generalise the classical notions of torsion and curvature of a connection. 
As we shall see, if  $\a$ satisfies certain additional conditions,  
there exists a right $G$-action on $P$, a $W$-valued 1-form $\q$ 
and a connection 1-form $\o$ on $P$ such that  $(P, \q, \o)$ is a $G$-structure  
with  a connection  having the property that  $\,\o + \q = \a^{-1}$,  
in the sense of  \eqref{alpha}. 
Then, given bases $(E^o_A)$ of $\gg$ and $(e^o_i)$ of $W$, 
the components $ T^k_{ij}$ and $R^A_{ij}$ of 
$T_u = T^k_{ij} e^o_k \otimes e^{oi} \otimes e^{oj}$ and 
$R_u = R^A_{ij} E^o_A \otimes e^{oi} \otimes e^{oj}$
are precisely   the  components of the torsion  and of the curvature of the connection $\o$ 
in the linear frame  $(e_i) = p(u) \in P' \subset L(M)$ (see Remark \Aref{G-remark}).

\begin{Arem}\label{gg_tant}
The conditions \eqref{gg-str} are tantamount to the following: 
\begin{itemize} [itemsep=2pt, leftmargin=18pt]
\item[a)] The map $\a|_{\gg}:\gg \rightarrow  \alpha(\gg)$ is  a faithful representation  
of $\gg$ in the Lie  algebra of vector fields.
\item[b)] The adjoint representation of $\a(\gg)$ in $\alpha(W)$ is equivalent to the  
linear representation of $\gg\subset \ggl(W)$ on $W$.
\end{itemize}
\end{Arem}

\begin{Adef} \label{completeness}
A $\gg$-structure  $ \alpha$ on $P$  is called  {\bss complete} if 
\begin{itemize} [itemsep=2pt, leftmargin=18pt]
\item[a)]  the Lie algebra $ \alpha(\gg)$  of vector fields  of $P$  defines a free 
right action of  a corresponding connected Lie group $G$   on $P$ and
\item[b)] the orbit space $M = P/G$ is a smooth manifold and the projection 
$\pi \colon P \rightarrow M = P/G$   is a locally trivial fibration.
\end{itemize}
\end{Adef}
\noindent
The absolute parallelism $\a$ on a $G$-structure $(P, \q)$ with 
a connection $\o$ given in \eqref{alpha} is a complete $\gg$-structure. 
The following proposition shows that this correspondence is in fact invertible. 
Thus complete $\gg$-structures  are in bijection with  
$G$-structures  endowed with  a connection.

\begin{Aprop}  \label{g-G-str} 
Let $\a$ be a complete $\gg$-structure   on a manifold $P$,
which has a free right $G$-action $\ \rho\colon P \times G \to P$.
Then there exists
\begin{itemize} [itemsep=2pt, leftmargin=18pt]
\item[i)]   a $W\!$-valued 1-form  $\vartheta\colon TP \to W$,
such that $(P, \q)$ is a $G$-structure and 
\item[ii)] a connection $\o$ on  the $G$-structure $(P, \q)$,
\end{itemize}
with the property that  the Cartan connection $ \k = \o + \q$
 is the inverse of the map $\a$ in the sense of eq.\ \eqref{alpha}.
\end{Aprop}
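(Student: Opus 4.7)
The plan is to recover the pair $(\vartheta,\omega)$ by inverting $\alpha$ pointwise and splitting the result according to the decomposition $\gq=\gg+W$. For each $u\in P$, the absolute parallelism provides a linear isomorphism $\alpha_u\colon\gq\to T_uP$; I define $\kappa_u:=\alpha_u^{-1}$, denote by $\pi_\gg\colon\gq\to\gg$ and $\pi_W\colon\gq\to W$ the canonical projections, and set
\[
\omega:=\pi_\gg\circ\kappa,\qquad \vartheta:=\pi_W\circ\kappa.
\]
By construction $\kappa=\omega+\vartheta$ is the inverse of $\alpha$ in the sense of eq.~\eqref{alpha}; the real task is to verify that the summands have the required invariance properties.

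Next I would extract the ``vertical'' information from the completeness assumption. Condition (a) of Definition~\Aref{completeness} says that the vector fields $\alpha(\gg)$ are exactly the fundamental vector fields of the free right $G$-action $\rho$, so the vertical distribution is $T^v_uP=\alpha(\gg)|_u$. Applying $\kappa$ to a vertical vector $\alpha(A)|_u$ returns $A\in\gg$, which simultaneously yields strict horizontality of $\vartheta$ and the identity $\omega|_{T^v_uP}=\nu^{-1}$. Condition (b) of the same definition provides the locally trivial principal $G$-bundle structure $\pi\colon P\to M=P/G$ required in Definition~\Aref{Gstr}, so the only remaining task is the global $G$-equivariance of $\omega$ and $\vartheta$.

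For the equivariance step I would integrate the bracket relations~\eqref{gg-str}. Let $\rho\colon G\to\GL(W)$ be the representation obtained by integrating $\gg\subset\ggl(W)$, and extend the adjoint representation of $G$ on $\gg$ to an action $\wt{\Ad}$ on $\gq=\gg+W$ by setting $\wt{\Ad}_g|_W:=\rho(g)$; the semidirect bracket $[A,v]=A\cdot v$ on $\gq$ is preserved, so $\wt{\Ad}$ acts by Lie-algebra automorphisms. Relations~\eqref{gg-str} read $[\alpha(A),\alpha(X)]=\alpha([A,X])$ for $A\in\gg$ and $X\in\gq$, i.e.\ $\mathcal{L}_{\alpha(A)}\alpha(X)=\alpha(\ad_A X)$. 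Since the flow of $\alpha(A)$ coincides with the right translation $R_{\exp(tA)}$, integrating this infinitesimal identity yields
\[
(R_g)_*\alpha(X)=\alpha(\wt{\Ad}_{g^{-1}}X)
\]
for $g$ in a neighbourhood of the identity, and hence for all $g\in G$ by connectedness. Inverting gives $R_g^*\kappa=\wt{\Ad}_{g^{-1}}\circ\kappa$, and projecting along the $\wt{\Ad}$-invariant summands $\gg$ and $W$ produces $R_g^*\omega=\Ad_{g^{-1}}\circ\omega$ and $R_g^*\vartheta=\rho(g^{-1})\circ\vartheta$. Together with the previous paragraph this exhibits $\vartheta$ as the soldering form of a $G$-structure and $\omega$ as a compatible connection with $\kappa=\omega+\vartheta$.

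The main obstacle I anticipate is precisely this integration step: one must carefully identify the flow of $\alpha(A)$ with the right $G$-action supplied by completeness, and verify that the extension $\wt{\Ad}$ of the adjoint representation to $\gq$ is compatible both with the matrix bracket on $\gg\subset\ggl(W)$ and with $\rho$ on $W$. If $G$ is not assumed connected, an additional argument on each connected component, using representatives of $G/G^{\circ}$, is required. Everything else reduces to a direct pointwise verification.
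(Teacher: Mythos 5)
Your proposal is correct and follows essentially the same route as the paper: the paper also defines $\omega$ and $\vartheta$ by splitting $\kappa=\alpha^{-1}$ via the vertical and horizontal projections (which is exactly your decomposition along $\gq=\gg+W$) and then asserts the $G$-equivariance ``by construction.'' Your explicit integration of the bracket relations \eqref{gg-str} along the flows $R_{\exp(tA)}$ simply fills in the detail the paper leaves implicit, and your worry about disconnected $G$ is moot since Definition~\Aref{completeness} takes $G$ connected.
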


\begin{proof}  
By  completeness,  $P$ is a principal $G$-bundle over $M = P/G$, 
where $G$ is  the connected group  generated  by the Lie algebra of vector fields 
$ \alpha(\gg)\subset \gX(P)$. 
By  Remark \Aref{gg_tant}b),  $G$  has a linear representation on the vector space 
$  \alpha(W) \simeq W $.  
This  defines an almost exact  representation of $G$ in $W$. 

Now, for $X\in T_uP\,,\, u\in P$, consider the  natural projections 
$(X)^{\gH}$ and  $(X)^{\gV}$  onto the horizontal and vertical subspaces 
$\gH_u, \gV_u \subset T_uP$. 
Since $\a(W)$ and $\a(\gg) $ generate $\gH$ and $\gV$, respectively,
there exist unique elements  $v\in W\,,\, E\in \gg$ such that  
$\a(v)|_u = (X)^{\gH}$ and  $\a(E)|_u = (X)^{\gV}$.  
Thus, the horizontal and vertical projections provide the mappings  
\begin{align*}
\q_u \colon\ T_uP \ni X &\longmapsto v \in W \\
\o_u \colon\ T_uP \ni X &\longmapsto E \in \gg\,.
\end{align*}
By  construction,   $\q$ is $G$-equivariant,  $(P, \q)$ is a $G$-structure,  
$\o$ is a connection on the $G$-bundle $\pi\colon P \rightarrow M = P/G$ and $ \o + \q = \a^{-1}$.
\end{proof}

\subsection{Complex $G$-structures and their real forms}
\label{complex}
\noindent 
Given a complex manifold $(N, J)$,  its {\it holomorphic} and 
{\it anti-holomorphic tangent bundles}, $T^{10} N$ and  $T^{01} N = \overline{T^{10} N} $ 
are the subbundles of  $T^\bC N$ given by  the $+i$ and $-i$ eigenspaces, respectively,
of  the $\bC$-linear map $J_x\colon T^\bC_x N \rightarrow T^\bC_x N$.

We recall that  {\it holomorphic vector fields}  of $(N, J)$ coincide with  
complex vector fields  of $N$ of the form $X  =  Y - i J Y$ for  $Y \in \gX(N)$ 
satisfying  $\cL_Y J = 0$.  
This condition is  equivalent to say  that in any system of holomorphic complex coordinates 
$\xi = (\z^1, \dots, \z^m)\colon \cU \subset N \rightarrow \bC^m,\ m=\dim_\bC N$,  
the complex vector field $X = Y - i J Y$ has the form  $X = X^i \frac{\partial}{\partial \z^i}$  
with   $X^i = X^i(\z^1, \ldots \z^m)$  holomorphic in  the coordinates $\z^i$.

\subsubsection{Complex $G$-structures and  complex $\gg$-structures}
\label{cx.G-str}
\noindent 
In this section  $ G \subset \GL(V)\,,\,  V= \bC^n$, is 
a connected {\it complex} linear group with Lie algebra $ \Lie(G)=\colon\gg \subset \ggl(V)$
and $\gq := \gg + V$ the associated nonhomogeneous Lie algebra,  with  
$[V, V] = 0$ and $[A, v] = A \cdot v$ for  $A \in \gg$ and $v \in V$. 
We shall treat $\gg$ and $V$ as  $\bR$-vector spaces endowed with the standard
complex structures $J_o \colon \gg \to \gg$ and  $ J_o \colon  V\to V$.   
Denote by  $(E^o_A) $ and  $(e^o_a)$  fixed choices of  complex bases  for $\gg$ 
and  $V$, respectively. Further, let $\gg^{10}$ and $\gg^{01}= \ol{\gg^{10}}$, respectively, 
be the  $+i$ and $-i$ eigenspaces of $J_o$  in the complexification   
$\gg^\bC = \gg^{10} + \gg^{01}$ of $\gg$. 
Recall that $\gg^{10}$ is naturally isomorphic to $\gg$ as  complex Lie algebra and 
that  each holomorphic element   $X \in \gg^{10}$ has the form 
$X = Y - i J_o Y$  for some $Y \in \gg$. 
We may generalise Def.\ \Aref{Gstr}  to the case of a 
complex Lie group $G$ as follows:

\begin{Adef}
A {\bss complex $ G$-structure} $(P,\q)$ on an $n$-dimensional (real) manifold  $M$ 
is a principal $G$-bundle $\,\pi\colon  P \rightarrow M$  equipped with 
a {\bss complex soldering form} $\, \vartheta\colon T^\bC  P \rightarrow V$, 
a $V$-valued $\bC$-linear 1-form, which is 
\begin{itemize} [itemsep=2pt, leftmargin=18pt]
\item[i)] 
$G$-equivariant (i.e.\ $R_g{}_*  \vartheta = g^{-1} \cdot  \vartheta$ 
for all $g \in G$) and

\item[ii)] strictly horizontal (i.e.\ $\ker  \vartheta_u= T^v{}^\bC_u  P$ 
for all $u \in  P$, where  $T^v{}^\bC_u  P$ is the complexification of 
the vertical subspace $T^v_u  P \subset T_u  P$).
\end{itemize}
\end{Adef}

The main motivation for considering  {\it complex} $ G$-structures
comes from the following relation to the {\it real} ones.
Let $W = \bR^n$ and let  $H \subset \GL(W)$ be a real form of 
$ G \subset \GL(V)$, with $V = W^\bC = \bC^n$. 
A (real) $H$-structure $(\wt P, \wt \q)$, 
in the classical sense with  $\wt P \subset  L(M)$,
can be considered as a reduction of the $\GL(V)$-bundle $L^\bC(M)$  of 
complex linear frames of $T^\bC M$.   
Consider the unique $\GL(V)$-equivariant and  strictly horizontal  1-form  
$$ 
\wh \vartheta\colon T^\bC (L^\bC(M)) \longrightarrow W^\bC = V\qquad 
\text{with}\ \ \wh \q|_{T \wt P} = \wt \q
$$
and the  $G$-reduction  $P = \wt P \cdot G  \subset L^\bC(M)$. 
The pair    $(P, \q = \wh \q|_{T^\bC P})$ is  a complex $G$-structure, 
which we call the  {\bss complexification\/} of the $H$-structure $(\wt P, \wt \q)$.
We may therefore think of the class of complex $G$-structures as  
a natural generalisation of the  principal bundles,  which arise via the above   
complexification procedure from real $G$-structures of linear frames.

In the more general case, where $(\wt P, \wt \q)$ is a (possibly non-trivial) covering of an 
 $H$-structure  $(\wt Q, \wt \q)$ of linear frames $\wt Q \subset L(M)$, 
 a  {\bss complexification}  of $(\wt P, \wt \q )$
 is   a complex $ G$-structure $(P, \vartheta)$, with  $\wt P \subset P$, 
 which is a covering of the complexification of $(\wt Q, \wt \q)$.  
 
The  complexification procedure of a real $G$-structure allows reversal. 
Let $\tau\colon V \rightarrow V$ be a  $\bC$-antilinear involution and consider the 
induced  involutions on $ \ggl(V)$ and  $ \GL(V)$:
$$
\tau(A) :=  \tau \circ A \circ \tau\,,\quad
 \tau(g) :=  \tau \circ g \circ \tau \,,\ {\rm for\ all}\ A \in  \ggl(V)\ ,\ g \in  \GL(V) \,.
 $$
When $\gg \subset \ggl(V)$ and $G \subset \GL(V)$ are preserved by $\t$ 
we say that  $\tau$ is {\bss $\gg$-admissible} 
and we denote by $V^\tau ,  G^\tau $ and $ \gg^\tau$  
the $\tau$-fixed point sets in $V\,,\,  G$ and $ \gg$, respectively. 
Note  that in this case $ \gg^\tau$  is a real form of 
$ \gg$ (i.e.\ $ (\gg^\tau)^\bC$ is naturally isomorphic to $\gg$). 

\begin{Adef}  
Let $\tau\colon V \rightarrow V$ be  a $\gg$-admissible $\bC$-antilinear involution 
and  $(P, \vartheta)$ a complex $ G$-structure over $M$. 
A {\bss real form}  $(P^\tau, \vartheta^\tau)$  of $(P, \vartheta)$ 
is  a  $ G^\tau$-reduction  $P^\tau \subset P$ with soldering form  
$ \vartheta^\tau = \vartheta|_{TP^\t}$  taking values in  $V^\tau \simeq W = \bR^n$.
\end{Adef}
\noindent
Clearly,  if   $(P^\tau, \vartheta^\tau)$ is a real form of $(P, \vartheta)$,  
then  $(P, \vartheta)$ is the complexification of $(P^\tau, \vartheta^\t)$. 

Let $(P, \vartheta)$ be a complex $ G$-structure and  
$\omega\colon T P \rightarrow \gg$ a connection form on the $G$-bundle $P$. 
The $\bC$-linear extension  of the 1-form $\omega_u$ 
on the complexified tangent space  $T^\bC_u P\,,\, u \in P$,  
determines a $ \gg^\bC$-valued 1-form $\o$ on $T^\bC P$.
We call 
{\bss complex  Cartan connection} associated with  $\q$ and $\o$ the map
\be \label{cxCartan} 
\kappa \colon T^\bC P \longrightarrow  \gg^\bC  +  V\,,\quad
\kappa := \omega + \vartheta\, .
\ee
We note that the restriction 
$\k_u = \k|_{T^\bC_u P}\colon T^\bC_u P \to \gg^\bC + V$ 
is a  $\bC$-linear  isomorphism for every $u \in P$. 
Now, since $\k_u$ is  $\bC$-linear and $\gg \subset \gg^\bC$ 
a real form of $\gg^\bC$,  the inverse isomorphism 
$\k_u^{-1} \colon \gg^\bC + V \to  T^\bC_u P$ 
is uniquely determined by the map 
$$\a_u := \k_u^{-1}|_{\gg + V}\colon \gg + V \longrightarrow T^\bC_u P\ .$$
The family of linear maps $\a_u\,\,,\, u \in P$, combine into the single map
\beq\label{cx.parallel} 
\a\colon   \gg + V   \longrightarrow \gX^\bC(P)\,,\qquad \a(X)|_u := \a_u(X)\ ,
\eeq
which we call the {\bss complex (absolute) parallelism} associated with  $\k$. 
By definition, $\k$ is completely determined by $\a$. 
Further,  $\a$ has, by construction, the following properties: 
 
\begin{itemize} [itemsep=2pt, leftmargin=18pt]
\item[i)]  
$\a(X) = \overline{\a(X)}$ for all $X \in  \gg$ and 
$\a(J_o v) = i \a(v)$ for all $v \in V$.

\item[ii)] 
The vector fields $\a(X) \in \a( \gg)$  generate  the vertical distribution 
$\cD := T^v P\subset TP$, on   which  the complex structure $J_o$ on $\gg$
induces the family of  complex structures $J = \{J_u\}$  defined by
$$
J_u\colon \cD_u \rightarrow \cD_u\ ,\quad 
J_u\  \alpha(X)|_u := \a(J_o X)|_u\ ; \ \ X \in \gg \,,\, u\in P\,.
$$
The pair $(\cD, J)$ is a  CR structure.
We denote by $Y^{10} := \frac{1}{2}(Y {-} i J Y)  \in \cD^\bC$, for $Y \in \cD$,  
the unique complex vector  field  satisfying
$J Y^{10} = i Y^{10}$ and  $Y = Y^{10} + \overline{Y^{10}}$.  

\item[iii)]   
For  $X  \in  \gg$ and $v \in V$,
$$  [\a(X),\a(v)] = \a(X \cdot v)\ ,\ \qquad   [\a(J_o X), \a(v)]  =   i \a(X \cdot v)\ .$$

\item[iv)]  
The complex parallelism $\a$  uniquely determines  the following pair of objects: 
\begin{itemize}  [itemsep=2pt, leftmargin=18pt]
\item[a)]  the CR structure  $(\cD, J)$ and
\item[b)]  the collection of vector fields  in $T^\bC P$,
$$\cA^{(\a)} = 
\left( e_i = \a(e^o_i)  \ ,\  E_A  = \fr{1}{2}\left(\a( E^o_A) - i \a(J_o E^o_A)\right) \right),
$$
 which, together with the   vector fields $\overline{E_A}$,  form  a  field of 
 complex linear frames for  $T^\bC P$,  with   the fields $E_A\,,\, \overline{E_A}$ 
 taking values in  $\cD^\bC$. 
  \end{itemize}
Conversely,  given a CR structure  $(\cD, J)$ and a collection of complex vector fields 
$\cA^{(\a)}$,  the complex parallelism $\a$ allows explicit determination.
In fact, given $(\cD, J)$ and the fields $(e_i, E_A)$, the  family of $\bC$-antilinear 
involutions $\overline{(\cdot)}\colon\cD^\bC_u \rightarrow \cD^\bC_u\,,\, u \in P$, defined  by
$$ 
 \overline{(A + i J A)} := A - i J_u A\ ,\quad  \overline{(A - i J A)} := A + i J_u A\ , \qquad 
 \text{for}\ A \in \cD^\bC_u\,,
$$
affords the construction of  $\a$ as the unique  $\bC$-linear  map 
$\a\colon\gg + V \to \gX^\bC(P)$ such that 
$$
\a(E_A^o)|_u = E_A|_u + \overline{E_A}|_u\ ,\qquad \alpha(e_i^o)|_u = e_i|_u\ .
$$
Note  that this means that   $\ E_A  = \a(E^o_A)^{10}$.
\end{itemize}
 
\noindent
Analogously to the absolute parallelisms of real $G$-structures with a connection,   
the map \eqref{cx.parallel} is an example of  
a special class of maps, called  {\it complex $ \gg$-structures}.

\begin{Adef} 
Let  $ \gg \subset \ggl(V),\  V = \bC^n\,,$ be a linear complex Lie algebra,
 $ \gp =  \gg + V$  the associated nonhomogeneous Lie algebra, with  
the standard complex structure
 $ J_o\colon V  \rightarrow V\ ,\ J_o\colon \gg \rightarrow  \gg$. 
Further, let  $P$ be a real manifold of  dimension 
$  \dim_{\bR} \gg + \frac{1}{2} \dim_{\bR} V$.
 A {\bss complex $ \gg$-structure\/}   is an $\bR$-linear map
 $\a\colon  \gp \rightarrow \gX^{\bC}(P)$
 such that, for   $X\in \gg\,,\, Y \in  \gp$ and $v \in V$,
\[
\a(X) = \overline{\a(X)}\,,    \qquad  \a(J_o v) = i \a(v) 
\]
and 
\beq
 \label{hgg-structure1}
 \begin{split}   
 [ \alpha(X)\,,\, \alpha(Y)]  &=   \alpha([X,Y])
\\   
[\alpha(J_o X), \alpha(v)]  &=   i \alpha(X \cdot v) \,,
\end{split}
\eeq
and the fields in $ \cA^{(\a)} = ( e_i := \a(e^o_i)   \,,\,    E_A :=\a(E^o_A)^{10})$
are $\bC$-linearly independent  at each $u \in P$.
The  {\bss CR structure of $\a$} is the pair $(\cD, J)$, consisting of
the distribution  $\cD_u = \Span_\bR\{\ \a(X)_u\ ,\ X \in  \gg\ \} $ and
the family of complex structures $J_u\colon \cD_u \rightarrow \cD_u$
defined by
$$
J_u \alpha(X)_u := \a(J_o X)_u\,.
$$
Two complex $ \gg$-structures  $ \alpha, \alpha'$ on  $P$ are called
{\bss equivalent}  if  there exists a diffeomorphism $\varphi$ of $P$ such that
$\a' = \varphi_* \circ \a$, where
 $\varphi_*$ is  extended to $T^\bC P$ by $\bC$-linearity.
\end{Adef}
\noindent
Conditions  \eqref{hgg-structure1} 
may be reformulated as follows: 
\begin{itemize} [itemsep=4pt, leftmargin=18pt]
\item[a)]
$\a|_{ \gg}\colon  \gg \rightarrow \alpha( \gg)$ is  an exact representation  of 
(the real Lie algebra underlying) $ \gg$ on the Lie  algebra of real vector fields in $TP$.

\item[b)]  
The adjoint representation of $\a( \gg)$ on $\alpha(V)$ is equivalent to 
the  linear representation of $ \gg\subset \ggl(V)$ on $V$.
\end{itemize}

The definitions of  vertical and  horizontal subbundles  $\gV, \gH \subset T^\bC P$,
determined by a complex $\gg$-structure $\a$, as well as  
the  torsion and  curvature of $\a$, are analogous to those for a real $\gg$-structure.
The  vector fields of a  complex $\gg$-structure 
in the vertical subbundle $\gV \subset T^\bC P$  generate,  as in the real case,  
a local action of the complex Lie group $G$, whose orbit space, locally identifiable
with the base of the  principal $G$-bundle $P\ra M$,  has real dimension
$\dim P - \dim_\bR \gg = \frac{1}{2} \dim_\bR V = n\,$.
However, there is a crucial  difference  between complex and real $\gg$-structures
in the geometric interpretation of  the horizontal subbundle  $\gH \subset T^\bC P$
generated by  vector fields in $\a(V) \subset \gX^\bC(P)$.  
For any $\ u \in P$, we have $\dim_\bR \gH_u = \dim_\bR V = 2n$.
So the  subbundle  $\gH  \subset T^\bC P$ is isomorphic to $T^\bC M$ 
and admits no natural interpretation as a real horizontal distribution in  $TP$.
The curvature and torsion of a complex $\gg$-structure, $R(v,v')$ and $T(v,v')$,
thus have arguments $v,v' $ in $V \simeq T^\bC_u M$ rather than in $T_u M$. 
 
\subsubsection{Real forms of complex $\gg$-structures}
\noindent 
Let  $\tau\colon V \rightarrow V\,,\, V = \bC^n$,  be  a $\gg$-admissible 
$\bC$-antilinear involution.

\begin{Adef}\label{tau_compat} 
A  torsionless complex $ \gg$-structure $\a\colon  \gg + V\rightarrow \gX^\bC(P)$  is 
\linebreak  {\bss $\t$-compatible} around  $x_o \in P$  
if  there exists an n-dimensional submanifold $M \subset P$ containing $x_o$,  
such that, at every point  $y \in M$, the following hold:

\begin{itemize} [itemsep=4pt, leftmargin=18pt]
\item[i)] $T_y M$ is  transversal to $\a(\gg^\t)|_y \subset T_y P$
\item[ii)] $\a(v)_y = \overline{\a(v)_y}$ for all $v \in V^\t$
\item[iii)] $\a(V^\t)_{y} \subset T_y M + \a(\gg^\t)|_y\, $. 
\end{itemize}
\end{Adef}

\begin{Aprop} \label{realizations} 
Let   $\a\colon \gg +  V\rightarrow \gX^\bC(P)$  be a complete, torsionless 
complex $\gg$-structure and let  
$\k =  \o + \vartheta\colon T^\bC P \rightarrow \gg^\bC + V$  be such that 
$\a(X)|_u = \k^{-1}_u(X)$ for $u \in P\,,\, X \in \gg + V$.  
The following conditions are equivalent:
 \begin{itemize} [itemsep=4pt, leftmargin=18pt]
\item[a)] 
The complex $\gg$-structure $\a$ is $\tau$-compatible around $x_o  \in P$.

\item[b)] 
There exists a local diffeomorphism  
$\imath\colon \cU\rightarrow \cU'  \subset P'$  between a neighbourhood 
$\cU\subset P$ of $x_o$ and an open subset $\cU'$ of the complexification $P'$ 
of a complete (real) $G^\t$-structure $(\wt P, \wt \vartheta)$, such  that 
$\imath_*(\o) = \o'$ is a torsionless connection  on $\cU' \subset P'$,  
which is the complexification of  a   connection on  the real form $ \wt P\cap \cU'$. 
\end{itemize}
\end{Aprop}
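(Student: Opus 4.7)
The proposal is to prove the two implications separately, with (b)$\Rightarrow$(a) being essentially formal and (a)$\Rightarrow$(b) carrying the main content. For (b)$\Rightarrow$(a), I would take $M$ to be the $\imath$-preimage of a local section of the real principal $G^\tau$-bundle $(\wt P \cap \cU')\to (\wt P \cap \cU')/G^\tau$ through $\imath(x_o)$. Condition (i) of Definition~A\ref{tau_compat} then expresses the transversality of a section to the $G^\tau$-fibers; condition (ii) follows because for $v\in V^\tau$ the field $\a(v)$ pushes forward under $\imath$ to the fundamental vector field of $v$ on the real form $\wt P$, hence is real; and (iii) follows from the decomposition $T_y \wt P = T_y M + T^v_y \wt P$ at points of $M$, combined with the fact that $\wt\vartheta$ is $V^\tau$-valued.

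For (a)$\Rightarrow$(b) I would proceed in three stages. \emph{Stage 1: construct $\wt P$.} Since $\a$ is complete, the real vector fields in $\a(\gg^\tau)$ generate a local right action of $G^\tau$ near $x_o$. Define $\wt P$ to be the local $G^\tau$-saturation of $M$. From condition (i) and a dimension count, $\wt P$ is a submanifold of real dimension $\dim_\bR \gg^\tau + n$, locally a trivial principal $G^\tau$-bundle over $M$. \emph{Stage 2: endow $\wt P$ with a real $G^\tau$-structure with torsionless connection.} One shows that $T_y\wt P$ is spanned by $\a(\gg^\tau)|_y + \a(V^\tau)|_y$ at every $y\in\wt P$: for $y\in M$ this is exactly (i)--(iii), and the extension to $y = x{\cdot}g$, $g\in G^\tau$, follows from the $G^\tau$-equivariance of $\a$ built into the $\gg$-structure axioms. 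Condition (ii), propagated along $G^\tau$-orbits, ensures these vectors are real, so $\wt\vartheta := \vartheta|_{T\wt P}$ is a $V^\tau$-valued soldering form and $\wt\o := \o|_{T\wt P}$ is a $\gg^\tau$-valued connection form. Torsionlessness of $\a$ passes to $\wt\o$ by restriction.

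\emph{Stage 3: identify $\cU$ with an open subset of the complexification of $\wt P$.} Let $P' = (\wt P)^\bC$ denote the complex $G$-structure obtained by complexifying $(\wt P,\wt\vartheta,\wt\o)$ via the procedure of Section~\ref{complex}; its Cartan connection $\k' = \o' + \vartheta'$ induces a complex parallelism $\a'$ on $P'$. The map $\imath$ is now built by matching parallelisms: at $x_o$ choose $\imath(x_o)$ on the distinguished copy of $\wt P$ in $P'$, declare $\imath_*\circ \a = \a'\circ\imath$ on the generators, and integrate. Since both $\a$ and $\a'$ satisfy the same Lie bracket relations $[\a(X),\a(Y)] = \a([X,Y])$ for $X\in\gg$ and the same CR-compatibility conditions, a Frobenius-type argument (using that $\a|_{\gg}\colon\gg\to\gX(P)$ integrates the full complex $G$-action, whose restriction to $G^\tau$ agrees with the real $G^\tau$-action on $\wt P$) produces a local diffeomorphism $\imath\colon\cU\to\cU'\subset P'$ which restricts to the identity $\wt P\cap\cU \to \wt P\cap\cU'$ and intertwines the complex $G$-actions. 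By construction $\imath_*\o = \o'$ is the complexification of $\wt\o$, and its torsion vanishes because the torsion of $\a$ does.

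The main obstacle I anticipate is in Stage 3: verifying that the diffeomorphism $\imath$ really identifies the complex vertical distribution $\cD = \a(\gg)$ on $P$ with the corresponding distribution on $P'$. The point is that $\cD$ on $\wt P$ is only half-real (the real part is $\a(\gg^\tau)$), and extending from $\wt P$ to a full neighborhood in $P$ requires flowing along the ``imaginary'' directions $\a(J_o X)$ with $X\in\gg^\tau$. One must show these flows exist locally (from completeness applied to the complex $G$-action) and that the CR structure of $\a$ matches that of $\a'$ along them; this is where the $\tau$-compatibility in its stronger form $\a(v) = \overline{\a(v)}$ for $v\in V^\tau$ becomes essential, as it pins down the reality locus inside $P$ that the complexification has to respect. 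Once this identification is established, the statement that $\imath_*\o$ is a complexification of a real connection is a consequence of the bijection between complex $\gg$-structures and complex $G$-structures with connection in Proposition~A\ref{g-G-str} applied on each side.
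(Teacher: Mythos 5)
Your plan follows the same route as the paper's proof: for (a)$\Rightarrow$(b) you saturate $M$ under the $G^\tau$-action generated by $\a(\gg^\tau)$, observe that the resulting submanifold is tangent to $\a(\gg^\tau+V^\tau)$ and carries a real $\gg^\tau$-structure identified via Proposition~A6 with an open set of a real $G^\tau$-structure with torsionless connection, and then extend the identification $G$-equivariantly to the complexification; the converse is read off from the definitions. The paper's own argument is considerably terser but identical in structure, so your proposal is correct and adds only expository detail (notably the flagged Stage~3 subtlety, which the paper dispatches with the single phrase ``imposing equivariance under the local action of $G=(G^\tau)^\bC$'').
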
 

\begin{proof} 
We first  check that (a) implies (b). 
Consider a submanifold $M \subset P$ containing $x_o$ and satisfying 
conditions (i)-(iii) of Def.\ \Aref{tau_compat}. 
By (i), $M$ is transversal to the orbits of  the  action of the  real Lie group $G^\t$ 
determined by the flows of the fields in $\a(\gg^\tau)$, where $\gg^\t = \Lie(G^\t)$. 
By (ii), (iii) and the properties of the Lie brackets of  the fields in $\a(\gg^\t {+} V^\t)$,  
the  union of the $G^\t$-orbits of the points of $M$, denoted by 
$\cU^\t = M \cdot G^\t$,  is (locally) a smooth submanifold of $P$ 
passing through $x_o$ and  tangent to the fields of $\a(\gg^\t  {+} V^\t)$.  
In particular,  the restriction to $\cU^\t$ of the vector fields in $\a( \gg^\t {+} V^\t)$ 
determines a $\gg^\t$-structure. 
By  Prop.\  \Aref{g-G-str},  $\cU^\t$ is  locally diffeomorphic to an open set in a 
 $G^\t$-structure $\wt P$, which we may  assume, with no loss of generality,  to be 
 a Cartesian product $\wt P = M \times G^\t$.
 By construction, the restriction of $\o$ 
 to the tangent space $T \cU^\t$ is mapped into a torsionless connection on $\wt P$. 
Imposing equivariance under local action of  $G = (G^\t)^\bC$,  
the local diffeomorphism between $\cU^\t$ and $\wt P$ extends to a local diffeomorphism 
between an open set of the form $\cU = \cU^\t \cdot G$ and an open set of the 
complexification $P' \simeq M \times G$ of $\wt P$. This proves  (b).
The proof that (b) implies (a) follows  directly from  the definitions. 
\end{proof}

\subsection{Complex $G$-structures, pseudo-hyperk\"ahler metrics and hk-pairs}
\label{appendixHK}
\subsubsection{Pseudo-hyperk\"ahler metrics as $\Sp_{p,q}$-structures}  
\label{ps_hk}
\noindent 
A {\bss hypercomplex structure\/} on a $4n$-dimensional 
real vector space $W$ is a triple $(J_1, J_2, J_3)$ of endomorphisms of $W$ 
satisfying the multiplication relations of the imaginary quaternions,  
$J^2_\a = - \Id_W\,,\,J_\a J_\b =   J_\g$, 
for all  cyclic permutations  $(\a,\b,\g)$  of $(1,2,3)$. 
An  inner product $g$ on $W$ is called 
{\bss hermitian} with respect to  the  hypercomplex structure $(J_1, J_2, J_3)$ 
if every $J_\a$ is skew-symmetric with respect to $g$, i.e.\  
$g(J_\a w, w') + g(w,J_\a w') = 0$ for all $w,w' \in W$.

\begin{Adef}
{\rm A $4n$-dimensional pseudo-Riemannian manifold $(M, g)$ 
of signature  $(4p, 4q)$,  with  $p+q = n$,  is called {\bss pseudo-hyperk\"ahler\/}  
if it is endowed with a triple $(J_1, J_2, J_3)$ of global sections of $\End(TM)$ such that 
\begin{itemize} [itemsep=2pt, leftmargin=18pt]
\item[i)]    
$(J_1, J_2, J_3)_x$ is a hypercomplex structure on $T_xM$  for every $x \in M$
and $g_x$  is  hermitian with respect to it, and
\item[ii)]   
$\nabla J_\a = 0$ for $\a = 1,2,3$, where $\n$ is the Levi-Civita connection of the metric $g$.
\end{itemize}
Equivalently,  a   $4n$-dimensional pseudo-Riemannian manifold $(M, g)$ of signature 
$(4p, 4q)$  is pseudo-hyperk\"ahler if and only if its holonomy algebra $\mathfrak{hol}(M,g)$ 
is a subalgebra of $\gsp_{p,q}$\/}. 
This is equivalent  to requiring that the Levi-Civita connection on $M$  preserves   
an  $\Sp_{p,q}$-reduction  $Q \subset  \operatorname{O}_g(M)$ 
of the orthonormal frame bundle.
When $g$ is Riemannian, $(M, g)$ is called {\bss hyperk\"ahler\/}.
\end{Adef}
 \vskip 1pt plus 2pt minus 12pt
Let $(M, g)$ be a pseudo-hyperK\"ahler manifold and  $\vartheta$ the canonical 
soldering form of the orthonormal frame bundle $\operatorname{O}_g(M)$.  
The pair $(\pi\colon Q {\rightarrow} M\,,\,  \vartheta|_{TQ})$  is 
an $\Sp_{p,q}$-structure with a  unique  Levi-Civita  (torsionless)  
connection  $\o\colon TQ \rightarrow \gsp_{p,q}$.
The $\Sp_{p,q}$-structure with connection $\o$  is  uniquely associated with 
the manifold $(M, g)$, modulo principal bundles equivalences.  

Conversely (see Remark \Aref{G-remark}), every $\Sp_{p,q}$-structure 
$(\pi\colon Q {\rightarrow} M, \vartheta)$ 
with a torsionless connection $\o$ determines  a pseudo-hyperk\"ahler metric on $M$.
$Q$ can be identified with  an $\Sp_{p,q}$-reduction of $L(M)$. 
Further,  every (local) section  $\s\colon M \rightarrow Q$ determines a field of frames 
$(e_i)$ on $M$, together with a pseudo-Riemannian metric $g$ of signature $(4p, 4q)$, 
with respect to which the frames $(e_i)$ are orthonormal.  
Since $Q \subset L(M)$ is an $\Sp_{p,q}$-bundle, it follows  that
 $g$ is independent of the choice of section $\s$ in $Q$,
 it is pointwise hermitian with respect to a family of hypercomplex structures,
 $(J'_i|_x)_{i=1,2,3}\,,\,x\in M$,
and that the restriction of the Levi-Civita connection of $\operatorname{O}_g(M)$ to $Q$ 
is the torsionless connection $\o$.
Thus $g$ is pseudo-hyperk\"ahler and  $Q$ is a holonomy reduction of 
$\operatorname{O}_g(M)$.
We therefore have: 

\begin{Aprop}  \label{firstprop} 
There exists  a natural one-to-one correspondence between \linebreak 
pseudo-hyperk\"ahler metrics $g$ of signature $(4p, 4q)$ on a manifold $M$, 
up to isometries, and $\Sp_{p,q}$-structures $(\pi\colon Q{\ra} M \,,\, \vartheta)$
possessing a torsionless connection, up to principal bundle equivalences.
\end{Aprop}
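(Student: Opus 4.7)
The plan is to establish both directions of the correspondence and then verify that isometries match principal bundle equivalences.

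For the forward direction, given a pseudo-hyperkähler manifold $(M,g)$ of signature $(4p,4q)$, I would invoke the defining property that $\hol(M,g)\subset \gsp_{p,q}$. This yields a holonomy reduction $Q\subset O_g(M)$ with structure group $\Sp_{p,q}$, and by standard holonomy theory the Levi-Civita connection $\o^{LC}$ of $g$ restricts to a connection $\o$ on $Q$, which is torsionless because $\o^{LC}$ is. The soldering form $\vartheta$ is obtained by restriction of the canonical $\bR^{4n}$-valued form on $O_g(M)$. Uniqueness up to principal bundle equivalence follows from the fact that any two holonomy reductions through a given point are conjugate under the action of the structure group.

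For the reverse direction, suppose $(\pi\colon Q\to M,\vartheta)$ is an $\Sp_{p,q}$-structure with a torsionless connection $\o$. Using Remark A\ref{G-remark}, I would realize $Q$ (possibly after passing to the covering associated with the representation $\rho\colon \Sp_{p,q}\hookrightarrow \GL(4n,\bR)$) as a reduction $\wt p(Q)\subset L(M)$. Fix the flat $\Sp_{p,q}$-invariant pseudo-hyperkähler data on $\bR^{4n}$: the inner product $g_o$ of signature $(4p,4q)$ and the triple $(J_1^o,J_2^o,J_3^o)$. For any local section $\s\colon M\to Q$, transport $g_o$ and the $J_\a^o$ via the frame field $(e_i):=\wt p\circ\s$ to define $g$ and $J_\a$ on $TM$. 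The key point is that under a change of section $\s\mapsto \s\cdot h$ with $h\colon M\to \Sp_{p,q}$, the transported tensors are unchanged because $\rho(\Sp_{p,q})$ stabilises $g_o$ and each $J_\a^o$; hence $g$ and $J_\a$ are globally defined on $M$ independently of $\s$. The hypercomplex identities and hermitianness are pointwise, hence inherited from $\bR^{4n}$.

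It remains to check that $\n J_\a=0$, where $\n$ is the Levi-Civita connection of $g$. Since $\o$ takes values in $\gsp_{p,q}$ and parallel transports the orthonormal frames of $Q$ to orthonormal frames, the induced connection on $TM$ preserves $g$; being torsionless, by uniqueness of the Levi-Civita connection it coincides with $\n$. Because $\o$ also takes values in the stabiliser of $(J_1^o,J_2^o,J_3^o)$, parallel transport preserves the $J_\a$, so $\n J_\a=0$. Thus $(M,g)$ is pseudo-hyperkähler with holonomy in $\Sp_{p,q}$, and the constructed $Q$ is precisely its holonomy reduction, giving mutual inversion of the two constructions. Finally, the correspondence of equivalences is straightforward: an isometry $f\colon (M,g)\to (M',g')$ lifts uniquely to an $\Sp_{p,q}$-equivariant bundle map $\wh f\colon Q\to Q'$ preserving $\vartheta$ and $\o$, and conversely any such bundle map descends to an isometry.

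The main obstacle, in my view, is the bookkeeping around the covering $p\colon Q\to \wt p(Q)\subset L(M)$ in Remark A\ref{G-remark}: the representation $\rho\colon\Sp_{p,q}\to \GL(4n,\bR)$ may fail to be faithful (e.g.\ through a $\bZ_2$ kernel in the spin-like situation), so $g$ and the $J_\a$ must be defined on $M$ via the quotient frame bundle $\wt p(Q)$, while the connection $\o$ lives on $Q$. Verifying that $\o$ descends to a connection on $\wt p(Q)$ and hence that the Levi-Civita argument applies requires checking that the kernel of $\rho_*$ is central and acts trivially on the pulled-back tensors — a routine but delicate step that must be made explicit.
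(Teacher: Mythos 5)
Your proof is correct and follows essentially the same route as the paper: holonomy reduction plus restriction of the Levi-Civita connection in one direction, and transport of the flat $\Sp_{p,q}$-invariant data $(g_o, J_\a^o)$ through sections, with independence of the section and the uniqueness of the torsionless metric connection, in the other. The "main obstacle" you flag is actually vacuous here, since the defining representation $\Sp_{p,q}\hookrightarrow\GL(4n,\bR)$ is faithful, so $Q$ identifies directly with a reduction of $L(M)$ and no covering bookkeeping is needed (the covering subtlety in Remark A1 only matters for groups like $\Spin$ or $\Sp_1\times\Sp_{p,q}$).
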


An $\Sp_{p,q}$-structure $(\pi\colon Q{\ra} M\, ,\, \vartheta )$ with torsionless connection 
can be regarded locally as a real form of  a complex $\Sp_n(\bC)$-structure. 
It can also  be considered naturally as an $\Sp_{p,q}$-reduction of 
a real form of  a complex  $(\Sp_1(\bC) {\times} \Sp_n(\bC))$-structure
$(\pi\colon P{\ra} M\, ,\,  \vartheta )$.   The reason is the following:
Since   $(Q , \vartheta)$ is  locally   a  bundle  of  orthonormal frames of 
a pseudo-hyperk\"ahler manifold $(M, g)$, it  can  also  be
considered as an $\Sp_{p,q}$-reduction  of the 
$\Sp_1 {\times} \Sp_{p,q}$-subbundle of $ \Spin_{4p,4q}(M,g)$. 
Since this subbundle is  a  real form of its  complexification,  
the bundle $Q$ is in turn  naturally identifiable with an $ \Sp_{p,q}$-reduction of 
the complex $(\Sp_1(\bC) {\times} \Sp_n(\bC))$-structure $P$.  

The latter has the following geometrical interpretation.
Recall that  an  $\Sp_1 {\cdot}\Sp_{p,q}$-reduction of the linear frame bundle $L(M)$ 
is uniquely  associated with a (local)  isomorphism  $T^\bC M \simeq H \otimes_M E$ 
between $T^\bC M$ and  the tensor product of  two complex vector bundles 
$\,\pi^H\colon H {\rightarrow} M\,$ and  $\,\pi^E\colon E{\rightarrow} M$, with  fibres  given  
by  standard complex representations of $\Sp_1(\bC)$ and $\Sp_n(\bC)$, respectively  
(see e.g.\  \cite{Sa}). This (local) identification  allows us to consider  complex frames for  
$T^\bC_x M$  of  the form  $(h_i {\otimes} e_a)_{i = 1,2;  1 \leq a \leq 2n}$,  
where $(h_i)$ and $(e_a)$  are complex frames for  $H_x$ and $E_x$, respectively,  
adapted to the   standard symplectic forms of $H_x$ and $E_x$. 
The collection of all such complex frames  is an $\Sp_1(\bC) {\cdot} \Sp_n(\bC)$-reduction  
of the complex linear frame bundle $L^\bC(M)$, whose double cover is 
the complex  $(\Sp_1(\bC) {\times} \Sp_n(\bC))$-structure $P$. 

Now, by construction, the Levi-Civita connection of $(M,g)$ uniquely corresponds to
torsionless  connections on  $(Q, \vartheta)$ as well as on $(P , \vartheta )$. 
Since the latter connection  is an $\Sp_1(\bC) \times \Sp_n(\bC)$-equivariant 
extension of  the former it follows  that  its curvature 2-form 
necessarily takes  values only in $\gsp_n(\bC)$.
We may now  directly obtain the following:

\begin{Athm} \label{equivalences}
Let  $\t\colon\gg  \to \gg$  and  $\t\colon V \rightarrow V$ be the  anti-involutions of 
$\gg = \gsp_1(\bC) + \gsp_n(\bC)$ and $V= \bC^{4n}$ defined in Sect.\ref{real_structures}.
There is  a one-to-one correspondence between the following two sets of data, 
up to, respectively, local isometry and local equivalence:

\begin{itemize} [itemsep=4pt, leftmargin=18pt]
\item[i)]  Pseudo-hyperk\"ahler metrics  of signature $(4p,4q)$ over open subsets 
of  $\ W = \bR^{4n}$ 
\item[ii)] Torsionless complex $\gg$-structures $\a\colon \gg +  V\rightarrow \gX^\bC(\cU)$
on neighbourhoods   $\cU$ of the identity 
$e = (I_2, I_{2n}, 0)$ in   $P= \Sp_1(\bC) {\times} \Sp_n(\bC) {\ltimes} W$, $W = V^\t$,     
so that 
 \begin{itemize} [itemsep=4pt, leftmargin=18pt]
\item[a)]  
the  curvature   $R_u \in  \Hom(\L^2 V, \gg)\, ,\, u \in \cU$,  
takes values only in the $\gsp_n(\bC)$ part of $\gg$, the $\gsp_1(\bC)$ part being trivial and 
\item[b)]  there exists a submanifold   $\wh \cU \subset \cU$  containing  $e \in \cU$, 
tangent  to   the  distribution defined by 
\beq\label{dist}
\cD_u = \a(\gsp_n(\bC))_u + \Span_\bR\{\, \Re(X)_u \, ,\, X \in \a(V^\t )\, \}\,,
\eeq
of dimension $\dim\, \wh \ccU = \rank \cD$,  such that  the  map
\beq \label {beta} 
\b\colon  \gsp_n(\bC) + V \rightarrow \gX^\bC(\wh \cU)\ ,\quad 
\b(X) := \a(X)|_{\wh \ccU}\ ,
\eeq
is a complex $\gsp_n(\bC)$-structure,    $\t$-compatible  around $e$. 
\end{itemize}
\end{itemize}
\end{Athm}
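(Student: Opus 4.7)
The plan is to construct both directions of the correspondence by combining Propositions \Aref{firstprop} and \Aref{realizations} with the observation, made just before the statement, that the Levi-Civita connection on the holonomy $\Sp_{p,q}$-bundle lifts canonically to a torsionless connection on the complexified $(\Sp_1(\bC)\times\Sp_n(\bC))$-bundle, with curvature valued only in the $\gsp_n(\bC)$ summand.

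For the forward direction, starting from a pseudo-hyperk\"ahler metric $g$ of signature $(4p,4q)$ on $\cV \subset \bR^{4n}$, I would first invoke Proposition \Aref{firstprop} to obtain the holonomy $\Sp_{p,q}$-structure $(Q,\vartheta)$ with its torsionless Levi-Civita connection $\omega^{\mathrm{LC}}$, then use the canonical lift via $\Spin_{4p,4q}(M,g)$ to pass to the $(\Sp_1{\times}\Sp_{p,q})$-bundle $\wt Q$, and finally take its complexification $P$, a principal $(\Sp_1(\bC){\times}\Sp_n(\bC))$-bundle. Choosing a local section through $x_o\in M$, a neighborhood of the identity in $P \ltimes V$ gets identified with a neighborhood $\cU$ of $e=(I_2,I_{2n},0)$ in $\cP$. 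The complex Cartan connection $\kappa = \omega + \vartheta$ (where $\omega$ is the holomorphic extension of $\omega^{\mathrm{LC}}$) then defines, via \eqref{cx.parallel}, a torsionless complex $\gg$-structure $\a$ on $\cU$. Condition (a) on the curvature is immediate, because $\mathfrak{hol}(M,g)\subset\gsp_{p,q}$ forces the $\gsp_1$-component of the curvature to vanish; the $\gsp_n(\bC)$-valuedness is preserved by holomorphic extension. For condition (b), the submanifold $\wh\cU$ is taken to be the image of $\wt Q$ under the chosen trivialisation; by construction, its tangent distribution is generated by $\a(\gsp_n(\bC))$ together with the real parts of $\a(V^\t)$, so it realizes exactly the distribution $\cD$ in \eqref{dist}, and the restricted map $\b$ is a $\t$-compatible complex $\gsp_n(\bC)$-structure in the sense of Definition \Aref{tau_compat}.

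For the reverse direction, suppose $\a\colon\gg+V\to\gX^\bC(\cU)$ is a torsionless complex $\gg$-structure satisfying (a) and (b). By (b) and Proposition \Aref{realizations} applied to the $\gsp_n(\bC)$-structure $\b = \a|_{\wh\cU}$, there is a local diffeomorphism of $\wh\cU$ onto (an open subset of) the complexification of a complete real $\Sp_{p,q}$-structure $(\wt P, \wt\vartheta)$ endowed with a torsionless connection $\omega'$, which is the complexification of a real connection. Proposition \Aref{firstprop} then associates with this data a unique (up to isometry) pseudo-hyperk\"ahler metric $g$ on the orbit manifold $M = \wt P/\Sp_{p,q}$, of signature $(4p,4q)$. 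The curvature assumption (a) is used precisely here: it guarantees that the $\omega'$ obtained from $\a$ has its curvature valued only in $\gsp_{p,q}$, hence is compatible with $\Sp_{p,q}$-reduction, and that the total connection on the larger bundle is genuinely the $(\Sp_1{\times}\Sp_{p,q})$-lift of a Levi-Civita connection rather than something more general.

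Finally I would check that the two assignments are mutually inverse at the level of equivalence classes. One direction is tautological: starting from $g$, the extraction of $(\wt P,\wt\vartheta,\omega^{\mathrm{LC}})$ from the constructed $\a$ reproduces, up to principal bundle equivalence, the original Levi-Civita data, because $\omega^{\mathrm{LC}}$ is uniquely characterised by torsionlessness and $\Sp_{p,q}$-valuedness. The other direction follows by the uniqueness clause in Proposition \Aref{realizations} combined with the fact that equivalences of complex $\gg$-structures (in the sense of  biholomorphisms conjugating the parallelisms) restrict to equivalences of the real forms, and conversely any equivalence of real $\Sp_{p,q}$-structures with connections extends uniquely to a $(\Sp_1(\bC){\times}\Sp_n(\bC))$-equivariant biholomorphism between the complexifications. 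The main obstacle I anticipate is the careful bookkeeping in the reverse direction: verifying that the abstract submanifold $\wh\cU$ provided by hypothesis (b) genuinely produces a real $\Sp_{p,q}$-structure whose complexification is, locally, the entire $\cU$ with the original $\a$ reconstructed as the complex parallelism of its Cartan connection; this requires exploiting both the torsionless assumption and the curvature condition (a) to avoid stray components outside $\gsp_n(\bC)$.
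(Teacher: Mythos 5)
Your proposal is correct and follows essentially the same route as the paper: both directions are obtained by chaining Proposition A4 (metrics $\leftrightarrow$ $\Sp_{p,q}$-structures with torsionless connection) with Proposition A7 ($\t$-compatibility $\leftrightarrow$ real forms), using the curvature condition (a) to ensure the connection reduces to the $\Sp_{p,q}$-holonomy bundle and condition (b) to supply the submanifold on which Proposition A7 applies. The paper's own proof is simply a more compressed version of the same argument, so no further comparison is needed.
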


\proof
By Prop.\ \Aref{firstprop},  a pseudo-hyperk\"ahler metric $g$ is naturally associated, 
up to local equivalences,  with a unique $\Sp_{p,q}$-structure with a torsionless 
connection.   The latter  is (locally) a reduction of a  real form of an 
$\Sp_1(\bC) {\times} \Sp_n(\bC)$-structure with a torsionless connection.
This real form corresponds to an associated  $\t$-compatible   
torsionless complex $\gg$-structure $\a$ (see Prop.\,\Aref{realizations}). 
This $\gg$-structure  satisfies the conditions  a) and b) by construction. 
Conversely,   if  a) and b) hold,  then $\a$ is associated, up to local equivalences, 
with an $\Sp_1(\bC) {\times} \Sp_n(C)$-structure  with a  real form admitting an 
$\Sp_{p,q}$-reduction corresponding to a pseudo-hyperk\"ahler metric $g$. 
\qed

\noindent
Complex $\gg$-structures  corresponding to pseudo-hyperk\"ahler metrics,
are said to be  {\bss reducible} to   $\gsp_{p,q}$-structures.

\subsubsection{$(\gsp_1(\bC){+}\gsp_n(\bC))$-structures and  hk-pairs} 
\label{hk_app} 

Consider a torsionless, complex $\gsp_1(\bC) {+} \gsp_n(\bC)$-structure 
$$
\alpha\colon  (\gsp_1(\bC) + \gsp_n(\bC)) + V\longrightarrow \gX^\bC(\cU') \,,\   V= W^\bC = \bC^{4n},
$$  
on some open neighbourhood    $\cU'$ of  
$e=(I_2, I_{2n}, 0) \in P  = \Sp_1(\bC) {\times} \Sp_n(\bC){\ltimes} W$,  $W\,{=}\,V^\t$, 
which is reducible  to an $\gsp_{p,q}$-structure. 
Let $ \cA_o = (H^o_0, H^o_{\pm\pm}, E^o_A, e^o_{\pm a} )$  be the  standard  basis of 
$\gp = \gsp_1(\bC) + \gsp_n(\bC) + V$ (see  Sect.\ \ref{basic}).
As discussed in Sect.\ \ref{cx.G-str}, $\a$ is completely determined by the 
set of complex vector fields
\begin{equation*}
\cA^{(\a)} = \left(e_{\pm a} = \a(e^o_{\pm a}) , 
                           H_0 = \a(H^o_0)^{10}, H_{\pm \pm} = \a(H^o_{\pm\pm})^{10},
                          E_A = \a(E^o_A)^{10}   \right)\,,
\end{equation*}
 which we call the {\bss  frame associated with $\a$}. 
The Lie brackets of the fields in $\cA^{(\a)}$ are  of the form 
\eqref{algebra}-\eqref{curv_constr} and it is therefore tempting to claim that
$\cA^{(\a)}$ is an hk-frame. 
Alas, this is not so (see Def.\ \ref{hkframe}), since the vector fields in $\cA^{(\a)}$ 
are not defined on an appropriate open neighbourhood $\cU \subset \cP$ of $e$,
but rather on  an open subset $\cU'$ of $P$,
a  codimension $n$ real  submanifold of $\cP = \Sp_1(\bC) \times \Sp_n(\bC)\ltimes V$. 
However, we   have:  

\begin{Alem} \label{appr_lem}
When  the data are real analytic and $\cU'$ is sufficiently small, there exists  
an  appropriate neighbourhood   $\ \cU \subset\cP\ $ of $\,e\,$, which contains $\cU'$, 
on which the vector fields in $\,\cA^{(\a)}$ admit  unique holomorphic extensions.\\ 
The set  
$\cA = (H_0, H_{\pm\pm}, E_A, e_{\pm a})$ of such holomorphic extensions on $\cU$ 
is a central hk-frame,    uniquely associated with $\a$ up to local equivalence,  such that  
 \begin{itemize} [itemsep=2pt, leftmargin=18pt]
\item[a)] 
the intersection $\left( \left(\{I_2\} \times \Sp_n(\bC)\right) \ltimes W \right)\cap \cU$  
is equal to a submanifold $\wh \cU$ of $\cU'$ as in Theorem \Aref{equivalences} b), 
i.e.  tangent at all points  the distribution  $\cD$  in \eqref{dist}   
\item[b)]  the pair $(\cA, M)$,  with  $M := \cU \cap \{I_2\}{\times}\{I_{2n}\}{\times} W  $, is an hk-pair. 
\end{itemize}
\end{Alem}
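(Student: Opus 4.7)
The plan has three main steps: (i) normalize $\alpha$ by a local $G$-equivariant diffeomorphism so that its vertical part matches the standard left-invariant action on $P$; (ii) exploit real analyticity to extend the horizontal fields $e_{\pm a}$ holomorphically from the CR submanifold $\cU' \subset P \subset \cP$ into a full complex neighborhood $\cU \subset \cP$; (iii) verify the axioms of a central hk-frame and of an hk-pair.

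For step (i), the bracket relations \eqref{hgg-structure1} make $\alpha|_\gg$ an infinitesimal right action of $\gg$ on $\cU'$ by real vector fields. Since $G = \Sp_1(\bC) \times \Sp_n(\bC)$ is simply connected, this action integrates on a possibly smaller neighborhood of $e$ to a local right $G$-action. Because the lemma identifies $\cA$ only up to local equivalence, I may compose with a local $G$-equivariant diffeomorphism to identify this action with the restriction to $\cU'$ of the standard right $G$-action on $P = G \ltimes W$. After this normalization, $\alpha(H_0^o)$, $\alpha(H_{\pm\pm}^o)$, $\alpha(E_A^o)$ coincide with the restrictions to $\cU'$ of the standard left-invariant vector fields on $\cP$, and their $(1,0)$-parts are literally the holomorphic vector fields $H_0^o$, $H_{\pm\pm}^o$, $E_A^o$ on $\cP$, giving their canonical holomorphic extensions.

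For step (ii), the horizontal fields $e_{\pm a} = \alpha(e^o_{\pm a})$ are real analytic $\bC$-valued vector fields on $\cU'$, and by $\alpha(J_o v) = i\alpha(v)$ they take values at each $u \in \cU'$ in $T^{10}_u\cP$. The identification of the CR structure induced by $\alpha$ with the one inherited from the ambient $\cP$---needed for $T^{10}_u\cP$ to mean what I want it to---is the main technical delicacy and follows from step (i), since the two structures already agree on the vertical subbundle. Because $P \subset \cP$ is a real analytic generic CR submanifold (complex in the $G$-factor, totally real in the $W = V^\tau$ direction), the standard CR extension theorem---concretely, a Taylor expansion in central coordinates, writing $z^{ia} = x^{ia} + iy^{ia}$ and extending complex-analytically in $y^{ia}$---produces unique holomorphic extensions of $e_{\pm a}$ to an open neighborhood $\cU \subset \cP$ of $e$. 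Shrinking $\cU$ if needed, I arrange that it is appropriate in the sense of Def.\ \ref{appropriate}; uniqueness of the extension is automatic from real analyticity, delivering the ``unique up to local equivalence'' assertion.

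For step (iii), all bracket relations \eqref{algebra}--\eqref{curv_constr} of Def.\ \ref{hkframe} hold on $\cU'$ by the $\gg$-structure axioms combined with torsionlessness (forcing $[e_{\pm a}, e_{\pm b}] = 0$) and with reducibility to $\gsp_{p,q}$ (confining $[e_{+a}, e_{-b}]$ to $\alpha(\gsp_n(\bC))$, so that it equals $R_{ab}^A E_A$), and extend to $\cU$ by the identity theorem for holomorphic functions. The form required by Def.\ \ref{centralframe} is then automatic: the vertical fields are already $H_0^o$, $H_{\pm\pm}^o$, $E_A^o$, and the corrections $e_{\pm a} - e^o_{\pm a}$ have charges $\pm 1$ under $\ad H_0^o$ and commute appropriately with $H_{\pm\pm}^o$, forcing them into $\Span_\bC\{e^o_{\pm b}, E^o_B\}$. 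For property (a), $\wh \cU := \cU \cap ((\{I_2\} \times \Sp_n(\bC)) \ltimes W)$ coincides with the real slice from Theorem\ \Aref{equivalences}(b), whose tangent spaces are spanned by $\Sp_n(\bC)$-directions (in $\alpha(\gsp_n(\bC))$) and $W$-directions (in $\Re(\alpha(V^\tau))$) and hence lie in the distribution $\cD$ of \eqref{dist}. For (b), $M = \cU \cap (\{I_2\} \times \{I_{2n}\} \times W)$ sits at $B = I_{2n}$, so it is transversal to $\gsp_{p,q}$-orbits, and $T_xM$ lies along $W = V^\tau$, hence inside $\alpha^\cA_{(\bR)}(V^\tau + \gsp_{p,q})|_x$, verifying Def.\ \ref{hk-pair}.
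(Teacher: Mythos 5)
Your overall architecture (normalize the vertical part, holomorphically extend the horizontal fields, verify the axioms) has the right shape, but step (ii) contains a genuine gap, and it sits exactly where the content of the paper's proof lies. The fields $e_{\pm a}=\a(e^o_{\pm a})$ are sections of $T^{\bC}P$ over $\cU'$, and since $W=V^\t$ is totally real in $V$ one has $W^{\bC}\cap V^{10}=0$, so $T^{10}_u\cP\cap T^{\bC}_uP$ is purely vertical; a horizontal complex vector tangent to $P$ is therefore \emph{never} an element of $T^{10}_u\cP$. Your assertion that the $e_{\pm a}$ take values in $T^{10}_u\cP$ is thus false, and the extension cannot be obtained by Taylor-expanding coefficients in the $y^{ia}$-directions: what gets extended is the $(1,0)$-part of the field under a suitable totally real embedding, i.e.\ the complexification procedure of Sect.~\ref{complex}, in which the antiholomorphic leaf directions become new independent holomorphic coordinates. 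More seriously, the compatibility you defer to --- that the complex structure determined by $\a$ in the horizontal directions agrees with one inherited from $\cP$ --- does \emph{not} follow from your step (i), which only normalizes the vertical fields. The horizontal complex data is the family of complex structures $J_y$ on the integral leaves $\gF_y$ of the distribution $\cD$ of \eqref{dist}, obtained by declaring $\cD^{10}=\Span_\bC\{E_A,e_{+a}\}$ holomorphic; these vary from leaf to leaf and have no a priori relation to the standard structure of $\cP$. The paper's proof consists precisely of: showing $\cD$ is integrable, constructing the leafwise complex structures and $\Sp_n(\bC)$-equivariant holomorphic trivialisations $\gF_y\simeq \Sp_n(\bC)\times\wt\gF$, complexifying the quotients $(\wt\gF,\wt J_y)$ into $V=\bC^{4n}$, and assembling these data equivariantly into the embedding $\imath\colon\cU'\to\cU\subset\cP$ which simultaneously produces the appropriate neighbourhood $\cU$, the submanifold $M$, and the holomorphic extensions. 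None of this appears in your proposal.

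Two smaller points. First, ``uniqueness of analytic continuation'' gives uniqueness of the extension only \emph{after} the embedding $\imath$ has been fixed; the ``unique up to local equivalence'' clause of the lemma is about the residual freedom in choosing $\imath$, not about continuation. Second, in step (iii) charge counting alone does not force $e_{+a}-e^o_{+a}$ into $\Span_\bC\{e^o_{\pm b},E^o_B\}$: a component along $H^o_{++}$ with a coefficient of charge $-1$ is not excluded by the relation $[H^o_0,e_{+a}]=e_{+a}$; one needs either Lemma \ref{lemma_split}(a) for negatively charged solutions or, as in the paper, the fact that the extensions are by construction tangent to the leaf directions, which contain no $\gsp_1(\bC)$-component.
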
 

\begin{Arem} 
The distribution $\cD$ can be  also described  as the  restriction 
 $\cD = \wt \cD|_{\cU'}$ of  the  real distribution  $\wt \cD \subset T\cU$,   generated by   
 an appropriate set of real and imaginary parts of  vector fields in  $\cA$, 
 namely by the real and imaginary parts of $E_A$, together with    
 the  vector fields  $\Re(\a(w))\,,\, w \in V^\t$.
 \end{Arem}

Before  proving the lemma, it is convenient to review  the notion of 
{\em (local) holomorphic extensions} of real analytic complex vector fields.

Let  $(N, J)$ be a complex manifold,  $\dim_\bC N = m$ and 
$T^\bC N = T^{10} N \oplus T^{01} N$ the decomposition in
holomorphic and anti-holomorphic tangent bundles.  
For any chart of holomorphic complex coordinates 
$\xi  = (\z^1, \dots, \z^m)\colon \cU \subset N \rightarrow \bC^m$
 and  $x \in \cU$, we have: 
\[ 
T^{10}_x N = \Span_\bC\left \{\ \left.\frac{\partial}{\partial \z^i}\right|_x\ \right\}
\ ,\quad 
T^{01}_x N = \Span_\bC \left\{\ \left.\frac{\partial}{\partial\overline{\z^i}}\right|_x\ \right\}\ .
\]
Given an open subset $\cV \subset N$ and standard coordinates $(z^i, w^j)$ of $\bC^{2m}$,  
a   {\bss complexification} of  $\cV$  
is a  real analytic embedding $\imath\colon \cV \subset N\rightarrow   \bC^{2m}$,  
satisfying

\begin{itemize} [itemsep=8pt, leftmargin=18pt]
\item[1)] 
$ \imath(\cV) \subset \{\ (z^i, w^i)\ \colon w^i - \overline{z^i} = 0\ \}\subset \bC^{2m}\ $ and
\item[2)] 
$ \imath_*\left(T^{10}_x N|_{\cV}\right) 
   = \Span_\bC\!\!\left\{\left.\frac{\partial}{\partial z^i}\right|_{\imath(z)} 
   +\left. \frac{\partial}{\partial \overline{w^i}}\right|_{\imath(z)},\ 1 \leq i \leq m\right\}$  
for all $x \in \cV$.
\end{itemize}
Complexifications are easily constructed  if  holomorphic complex coordinates  
$\xi = (\z^i)$  on $\cV$ exist. Namely, it suffices to consider  the  embedding   
$$
\imath\colon \cV \subset N\longrightarrow \bC^{2m}\ ,\qquad 
\imath(x) = (\z^1(x) , \dots, \z^m(x), \overline{\z^1}(x), \dots, \overline{\z^m}(x))\ .
$$
Consider now  a real analytic complex vector field  on $\cV$
$$
X = X^i(\zeta^k, \bar \zeta^\ell) \frac{\partial}{\partial \zeta^i} 
       + \cX^{j}(\zeta^k, \bar \zeta^\ell) \frac{\partial}{\partial \overline{\zeta^j}}
$$
and   identify  $X$ with  the field $\imath_*(X) \in T ^\bC \bC^{2m}|_{\imath(\cV)}$ on 
$\imath(\cV)$.  This vector field  immediately  extends   to a holomorphic vector field 
$ X^{hol}$, defined on an  open  neighbourhood  $\cW \subset \bC^{2m}$ of 
$\imath(\cV)$  by setting 
\beq 
\left.X^{hol}\right|_{(z^k,w^\ell)} 
                  := X^i(z^k, w^\ell) \frac{\partial}{\partial z^i} 
                     + \cX^{j}(z^k, w^\ell) \frac{\partial}{\partial w^j}\ ,
\eeq
i.e.\ replacing  the  (dependent) complex coordinates $\z^k$ and $\bar \z^\ell$  of $\cV$
by the independent variables $z^k$ and $w^\ell$ of $\bC^{2m}$. 
The resulting holomorphic vector fields are  called  {\bss (local) holomorphic extensions} 
of real analytic vector fields.  

We now proceed to the missing proof:

\noindent
{\em Proof of Lemma \Aref{appr_lem}.}
Consider the distribution   $\cD \subset T\cU'$ and the  submanifold $\wh \cU\subset \cU'$,
tangent to the distribution $\cD$, described  in  Theorem \Aref{equivalences}.   
From  \eqref {beta} and the hypotheses on the Lie brackets,  it follows that $\cU'$ 
is foliated by submanifolds of the form $\wh \cU {\cdot} g$  determined  by images of   
$\wh \cU$ under   the local action of the elements $g  \in \Sp_1(\bC) \times \Sp_n(\bC)$. 
By construction,  all such   submanifolds are  integral leaves of $\cD$, 
proving that the distribution  $\cD$  is indeed integrable. 
Consider  the complex distribution $\cD^{10}$ defined  by 
 $\cD^{10}_x = \Span_\bC\{\ E_A|_x, e_{+ \a}|_x\ \}$ for  $x \in \cU'$.
Recall  that $\ \cD^\bC_x = \cD^{10}_x  +  \cD^{01}_x$ where  
$\cD^{01}_x:=\overline{ \cD^{10}_x}\,$.
 
For  $y \in  \cU'$,   we  denote by $\gF_y$ the integral leaf  of  $\cD$ passing through  $y$. 
Since the fields $(E_A + \overline{E_A})|_{\gF_y}$ generate a Lie algebra of 
real vector fields  isomorphic to $\gsp_n(\bC)$,  they determine  a local right action of 
$\Sp_n(\bC)$ on $\gF_y$.
Moreover, the  complementary subbundles  $ \cD^{10}|_{\gF_y}\,,\, \cD^{01}|_{\gF_y}$ 
of $T^\bC \gF_y$ are  involutive and $\Sp_n(\bC)$-invariant.  Therefore, there exists 
a unique $ \Sp_n(\bC)$-invariant integrable complex structure $J_y$  on  $\gF_y $, 
which has the subbundles $ \cD^{10}|_{\gF_y} \,,\,  \cD^{01}|_{\gF_y}$ as associated 
holomorphic and anti-holomorphic distributions.  By $\Sp_n(\bC)$-invariance, such  a 
complex structure $J_y$ naturally projects onto a complex structure $\wt J_y$ 
on the  quotient $\wt \gF_y = \gF_y/\Sp_n(\bC)$. 
We may identify the pair $(\wt \gF_y, \wt J_y)$, without loss of generality,  with 
an open neighbourhood  $\wt \gF_y$ of $0$ in  $ (\bC^{2n})^\t \simeq \bR^{4n}$, 
endowed with an appropriate   complex structure $\wt J_y$.

If $\cU' \subset P$  is sufficiently small 
we may always assume that the following conditions are satisfied: 

\begin{enumerate} [itemsep=2pt, leftmargin=18pt]
\item[a)] 
All   integral leaves   of $\cD$ are  transversal to  the orbits of the local right action of 
$\Sp_1(\bC)$ generated by real vector fields   in  
$$
\Span_\bR\{\Re(H_0), \Im(H_0), \Re(H_{\pm\pm}), \Im(H_{\pm\pm})\} \simeq \gsp_1(\bC)\,.
$$

\item[b)] 
The quotients $\wt \gF_y  =  \gF_y/\Sp_n(\bC) \subset \bC^{2n} $ are all diffeomorphic 
to  a fixed suitable open subset  $\wt \gF$ of $(\bC^{4n})^\t \simeq \bR^{4n}$.  
Thus, $\wt \gF \subset \bR^{4n}$  is  equipped with a family integrable complex structures 
$\{\wt J_y\ ,\ y \in \cU'\ \}$, these being the push-forwards
of the complex structures of the leaves $\wt \gF_y\,,\, y\in \cU'$. 

\item[c)]  Any    leaf  $\gF_y$  admits a holomorphic trivialisation  
$$
\varphi_y\colon (\gF_y\,,\, J_y)\ \rightarrow\  (\Sp_n(\bC) \times \wt \gF\,,\,  J_{o} +\wt J_y )\ ,
$$ 
where   $J_{o}$ is the standard complex structure of $\Sp_n(\bC)$.
 
\item[d)]  
For all $y \in \cU'$, the complex manifold  
$(\wt \gF, \wt J_y)\subset (\bR^{4n}, \wt J_y) \simeq \bC^{2n}$ admits 
a complexification  $\ \imath_y\colon \wt \gF   \rightarrow V = \bC^{4n}$, 
which together with the trivialisation  $\varphi_y$,  determines  a real analytic 
$\Sp_n(\bC)$-equivariant embedding 
$$
\imath_y\colon \gF_y\ \rightarrow\   (\{I_2\} \times \Sp_n(\bC)) \ltimes  V\,  ,\
\ y \mapsto (I_2, I_{2n}, z^i(y)) \in \cP \,.
$$
Using  this embedding,  the vector fields    $(E_A, e_{\pm a})|_{\gF_y}$   
extend  holomorphically to an open neighbourhood  
$\cU_y  \subset (\{I_2\} {\times} \Sp_n(\bC)) {\ltimes} V$ 
of  $\imath(\gF_y) $.   We may choose 
the map $\imath_y$  so that $\imath_* (E_A) = E^o_A$.

\item[e)]   
Given a submanifold $M \subset \wh \cU$  satisfying the conditions of 
Def.\,\Aref{tau_compat},  the maps $\imath_y\,,\, y \in M$,  combine  to  determine  
an  $\Sp_1(\bC) {\times}  \Sp_n(\bC)$-equivariant real analytic embedding of $\cU'$ into  an 
appropriate open neighbourhood  $\cU$ of $e  \in  \cP$, 
\beq
  \imath\colon \cU' 
  = \!\!\!\! \bigcup_{y \in M {\cdot}  \Sp_1(\bC)} \!\!\!\!\!  \gF_y\  
                  \longrightarrow\ \cU \subset \cP =  (\Sp_1(\bC) \times  \Sp_n(\bC))\ltimes V\,.
\eeq
 This embedding  can be constructed to map the points  $y\in M$ into points of 
 $M^{o} = \{I_2\}{\times}\{I_{2n}\}{\times}(\bC^{4n})^\t$ and the complex vector fields  
 $H_0\,, H_{\pm\pm}\,, E_A\,, e_{\pm a}$ of $\cU'$  into  complex vector fields of 
 $\imath(\cU') \subset \cU$, which extend holomorphically   to all of  $\cU$.
The equivariant  embedding $\imath$ can also be  constructed so that the holomorphic 
extensions of  $\imath_*(H_0) , \imath_*(H_{\pm\pm}), \imath_*(E_A)$ are 
$H_0^o, H^o_{\pm\pm}, E_A^o$, respectively. 
\end{enumerate}
By construction, the pair $(\cA, M = M^{o})$,  formed by  the collection $\cA$ of the above holomorphic extensions of the vector fields in $\cA^{(\a)}$  and  the manifold $M^{o}= \imath(M)$  is a central hk-pair and it  is uniquely determined by $\a$ up to local equivalences.  
\qed

By Theorem \Aref{equivalences} and Lemma \Aref{appr_lem},  we may associate  
an hk-pair $(\cA, M)$ with  every real analytic pseudo-hyperk\"ahler manifold $(M, g)$. 
In the next section, we show that, up to local  equivalences,  this correspondence is invertible, providing a  bijection  between  local 
isometry classes of  real analytic pseudo-hyperk\"ahler manifolds  and
 local equivalence classes of hk-pairs.

\subsection{Inverse map  between hk-pairs and pseudo-hyperk\"ahler metrics}
\label{bijection}
\noindent 
Consider   the pseudo-hyperk\"ahler metric $g$ determined  by 
an  hk-pair $(\cA, M)$ and a section $\s\colon M \rightarrow \cU^{(\Sp_{p,q})}$ 
(see Sect.\ \ref{hkpairs-metrics}).  We now  prove
 that $g$ is  uniquely associated with $(\cA, M)$.

\begin{Alem}\label{sigma-indep}
The   metric \eqref{metric}, constructed from an hk-pair   $(\cA, M)$  of signature $(4p,4q)$  on  an appropriate open subset
$\cU \subset \cP$,  is independent of the choice of  section  
$\s\colon M \rightarrow \cU^{(\Sp_{p,q})}$ and  is a  real analytic
pseudo-hyperk\"ahler metric  of signature $(4p, 4q)$. 
\end{Alem}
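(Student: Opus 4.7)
The plan is to establish the two assertions separately: (a) independence of the metric from the choice of section $\s$, and (b) the metric is real analytic and pseudo-hyperk\"ahler of signature $(4p,4q)$.

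For (a), the key tool will be the equivariance of the hk-parallelism under the right $G$-action. Since $\cU^{(\Sp_{p,q})} \simeq M \times \Sp_{p,q}$ trivially, any two sections $\s, \s'$ are related by $\s'(x) = \s(x) \cdot h(x)$ for a smooth map $h\colon M \to \Sp_{p,q}$. Integrating the Lie bracket identity $[\a^\cA(X), \a^\cA(v)] = \a^\cA(X \cdot v)$ for $X \in \gg$, $v \in V$ (condition (c) following \eqref{corr1}) yields $\a^\cA(v)|_{ug} = (R_g)_*(\a^\cA(g \cdot v)|_u)$ for $g \in G$. Applied with $u = \s(x)$, $g = h(x)$, $v = e^{o\t}_I$ and taking real parts (noting that $h(x) \cdot e^{o\t}_I$ remains in $V^\t$ since $h(x)$ preserves $V^\t$), this rearranges to $e^\t_I|_{\s'(x)} = h(x)^J{}_I\, (R_{h(x)})_*(e^\t_J|_{\s(x)})$, where $h(x)^J{}_I$ denotes the matrix of the standard action on $V^\t$. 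Projecting via $\pi_*$ and using $\pi \circ R_{h(x)} = \pi$, I obtain $e^{(\s')}_I|_x = h(x)^J{}_I\, e^{(\s)}_J|_x$. Since $\Sp_{p,q}$ preserves the form $I_4 \otimes \h$ on $V^\t$ by definition, the two vielbein frames determine the same metric via \eqref{metric}.

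For (b), real analyticity is immediate: the hk-frame fields are holomorphic on $\cU \subset \cP$, hence their restrictions $e^\t_I$ to the real submanifold $\cU^{(\Sp_{p,q})}$ are real analytic, and choosing a real analytic section $\s$ (e.g., the constant section under the trivialization $\cU^{(\Sp_{p,q})} \simeq M \times \Sp_{p,q}$) produces real analytic vielbeins on $M$. For the pseudo-hyperk\"ahler property I would invoke Theorem \Aref{equivalences}: the hk-frame $\cA$ defines via the absolute parallelism $\a^\cA$ a complex $\gg$-structure on $\cU$, and the Lie bracket relations \eqref{algebra}, \eqref{curv_constr} render it torsionless with curvature confined to $\gsp_n(\bC)$, since the $H_0, H_{\pm\pm}$-components of $[e_{\pm a}, e_{\pm b}]$ and $[e_{+a}, e_{-b}]$ all vanish. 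The hk-pair conditions (i), (ii) of Definition \ref{hk-pair}, applied on the $\Sp_n(\bC)$-saturation $\wh \cU := M \cdot \Sp_n(\bC)$, will supply the $\t$-compatibility of the reduced $\gsp_n(\bC)$-structure $\b$ in the sense of Def.\ \Aref{tau_compat}, as required by Theorem \Aref{equivalences}(b). Theorem \Aref{equivalences} then yields a real analytic pseudo-hyperk\"ahler metric on $M$ of signature $(4p,4q)$.

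The main obstacle I anticipate is reconciling the metric produced by Theorem \Aref{equivalences} with the explicit vielbein metric \eqref{metric}. The correspondence underlying Theorem \Aref{equivalences} implicitly identifies (via Prop.\ \Aref{realizations} and Prop.\ \Aref{g-G-str}) the submanifold $\cU^{(\Sp_{p,q})}$ with a local $\Sp_{p,q}$-reduction of the orthonormal frame bundle of the constructed pseudo-hyperk\"ahler manifold. Under this identification, $\s(x)$ is a linear frame on $T_x M$ whose vectors are the images of $(e^{o\t}_I)$, so the vielbein $(e^{(\s)}_I|_x)$ is orthonormal for the Theorem \Aref{equivalences} metric. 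Both metrics are then characterised by the same orthonormality condition and must coincide.
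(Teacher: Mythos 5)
Your proposal is correct and follows essentially the same route as the paper's proof: the paper also realifies the hk-parallelism on $\cU \cap P$ ($P = G \ltimes V^\t$, not all of $\cU$) to obtain a $\t$-compatible torsionless complex $\gg$-structure and then invokes Prop.\ \Aref{realizations} and Theorem \Aref{equivalences} to identify $\cU^{(\Sp_{p,q})}$ with an $\Sp_{p,q}$-reduction of the orthonormal frame bundle of a pseudo-hyperk\"ahler metric, so that $\vartheta(e^\t_I) = e^{o\t}_I$ forces every section's vielbeins to be orthonormal for that same metric. The only difference is cosmetic: you make the section-independence explicit via the $G$-equivariance identity for $\a^\cA$ and the invariance of $I_4 \otimes \h$ under $\Sp_{p,q}$, whereas the paper reads the same fact off directly from the frame-bundle identification.
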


\begin{proof}
Let  $\a\colon \gg + V \rightarrow \gX^{\bC}(\cU \cap P)$ be the $\bR$-linear map
defined by 
$$
\a(X) = \left.\Re(\a^\cA(X))\right|_{\cU \cap P}  \,,\quad 
\a(v)  = \left.\a^{\cA}(v)\right|_{\cU \cap P}
$$
for  $X \in \gg$ and $v \in V = \bC^{4n}$, 
where $\a^\cA$ is the absolute hk-parallelism associated with $\cA$ (see Sect.\ \ref{hk_frames}). 
By construction and the assumptions on $M$, the map $\a$ is a $\t$-compatible, complex 
$\Sp_1(\bC) \times \Sp_n(\bC)$-structure on $\cU \cap P$.
From  the proof of Prop.\ \Aref{realizations}, it follows that 
$M \cdot (\Sp_1(\bC) \times \Sp_{2n}(\bC)) \subset \cU$
is  an open subset of the complexification of an $\Sp_1 \times \Sp_{p,q}$\!-structure $\wt P$ 
over (an open subset of) $M$ and that  the set $\cU^{(\Sp_{p,q})} = M \cdot \Sp_{p,q}$, defined in \eqref{USP},  is an $\Sp_{p,q}$-reduction of an $\Sp_1 \times \Sp_{p,q}$-invariant 
open subset of such $\Sp_1 \times \Sp_{p,q}$-structure. 

The conditions on the curvature imply that $\wt P$   can be identified with a double covering of an 
$\Sp_1 \cdot \Sp_{p,q}$-reduction of $ L(M)$,  
admitting a further reduction to an $\Sp_{p,q}$-bundle $Q  \subset L(M)$.  
Thus the set $\cU^{\Sp_{p,q}}$ is identifiable with an $\Sp_{p,q}$-invariant open subset of $Q$. 
As explained in Sect.\ \ref{ps_hk},  we therefore have that: 
\begin{itemize}[itemsep=2pt, leftmargin=18pt]
\item[i)]  The bundles $\pi^Q\colon Q\to M$ and  $\pi^{\wt P}\colon \wt P\to M$  are  formed by 
linear frames which are orthonormal with respect to a pseudo-Riemannian metric $g$
and pointwise hermitian with respect to a family of hypercomplex structures 
$(J_i|_x)_{i=1,2,3}\,, x\in M$.
 
\item[ii)]  
The fields $ e^\t_{I}  = \a^\cA_{(\bR)}(e^{o\t}_{I})$  are identifiable with (local) vector fields on $Q$,
horizontal with respect to the Levi-Civita connection of $g$ and  
$\vartheta(e^\t_{I}) =  e^{o\t}_{I}$.
\end{itemize}
 It follows that $g$ is pseudo-hyperk\"ahler and that the frame fields  
 $e^{(\s)}_{I} = \pi_*(e^\t_{I})$  are orthonormal with respect to $g$
regardless of the choice of the local section 
 $\s\colon  M \rightarrow \cU^{(\Sp_{p,q})} $.
\end{proof}

We are finally in a position to prove the result quoted in Sect.\ \ref{hkpairs-metrics}.
\begin{Athm} \label{loc.isom}
 Every real analytic pseudo-hyperk\"ahler manifold of signature $(4p,4q)$ is 
locally isometric to a pseudo-hyperk\"ahler manifold $(M, g)$ 
corresponding to an hk-pair of signature $(4p,4q)$, with metric given by
eq.\ \eqref{metric}.  
\end{Athm}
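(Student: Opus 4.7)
The plan is to trace a real analytic pseudo-hyperk\"ahler manifold $(N,h)$ of signature $(4p,4q)$ through the chain of correspondences already established in Propositions \Aref{firstprop}, \Aref{realizations}, Theorem \Aref{equivalences} and Lemma \Aref{appr_lem}, and then to check that the pseudo-hyperk\"ahler metric $g$ produced at the end, via the construction of Sect.\ \ref{hkpairs-metrics}, is locally isometric to $h$.

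First, given $(N,h)$, pick a point $x_o\in N$ and restrict to a sufficiently small $\t$-invariant real analytic neighbourhood $N_o$ of $x_o$ on which one has a holonomy reduction $Q\subset O_h(N_o)$ to an $\Sp_{p,q}$-bundle, carrying the induced (torsionless) Levi-Civita connection and soldering form $\vartheta$. By the discussion following Proposition \Aref{firstprop}, the data $(Q,\vartheta,\omega)$ is a real form, reducible to $\Sp_{p,q}$, of a complex $(\Sp_1(\bC){\times}\Sp_n(\bC))$-structure $(P',\vartheta',\omega')$ on $N_o$ with a torsionless complex connection whose curvature takes values in $\gsp_n(\bC)$ only. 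By the construction in \eqref{cxCartan}--\eqref{cx.parallel}, this Cartan connection determines a torsionless complex $\gg$-structure $\a\colon \gg+V\to \gX^\bC(P')$, and by Theorem \Aref{equivalences} the data $\a$ satisfies conditions (a), (b) there, so $\a$ is $\t$-compatible and reducible to $\gsp_{p,q}$.

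Second, I invoke Lemma \Aref{appr_lem}: because $h$, and therefore $\a$, are real analytic, after shrinking $N_o$ (and hence $P'$) if necessary, the vector fields $\cA^{(\a)}=(H_0,H_{\pm\pm},E_A,e_{\pm a})$ admit unique holomorphic extensions to an appropriate open neighbourhood $\cU\subset \cP$ of $e$. These extensions form a central hk-frame $\cA$, and together with $M:=\cU\cap(\{I_2\}{\times}\{I_{2n}\}{\times}W)$ they constitute an hk-pair $(\cA,M)$ of signature $(4p,4q)$. Moreover, the equivariant real analytic embedding $\imath\colon P'\hookrightarrow\cU$ constructed in that lemma sends the real form $Q\subset P'$ onto $\cU^{(\Sp_{p,q})}=M{\cdot}\Sp_{p,q}$, mapping $N_o$ onto $M$ and commuting with the right actions.

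Third, I produce the pseudo-hyperk\"ahler metric $g$ on $M$ from $(\cA,M)$ as in Sect.\ \ref{hkpairs-metrics}: choose any section $\s\colon M\to \cU^{(\Sp_{p,q})}$, form the real vielbeins $e^{(\s)}_I=\pi_*(\a^\cA_{(\bR)}(e^{o\t}_I)|_{\s(\cdot)})$, and declare them orthonormal in the sense of \eqref{metric}. By Lemma \Aref{sigma-indep} the resulting $g$ is independent of $\s$ and is genuinely pseudo-hyperk\"ahler of signature $(4p,4q)$. It remains to prove that $\imath\colon N_o\to M$ pulls $g$ back to $h$. For this, I choose the section $\s$ to be $\imath\circ\s_o$, where $\s_o\colon N_o\to Q$ is any local section of the holonomy bundle. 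Then the vielbeins $e^{(\s)}_I$ correspond under $\imath$ precisely to the frame vectors $\pi^Q_*(e^\t_I|_{\s_o})$ on $N_o$; but by the second conclusion of the proof of Lemma \Aref{sigma-indep} (items (i)--(ii)), these latter frame vectors are exactly the image under $\s_o$ of orthonormal frames for $h$ in the bundle $Q$. Consequently the orthonormality relations \eqref{metric} that define $g$ coincide, under $\imath$, with those characterising $h$ in the chosen frame, giving $\imath^*g=h$ on $N_o$.

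The main obstacle, which absorbs most of the work, is the bookkeeping in this last step: ensuring that the complexification--holomorphic extension procedure of Lemma \Aref{appr_lem} is compatible with the identification of vielbeins, i.e.\ that the horizontal vector fields $e^\t_I$ of the complex $\gg$-structure, once extended holomorphically and projected to $M$, really reproduce the horizontal vector fields on $Q$ projected to $N$. Since Lemma \Aref{appr_lem} constructs $\imath$ to be $\Sp_1(\bC){\times}\Sp_n(\bC)$-equivariant and to satisfy $\imath_*(H_0^o)=H_0^o$, $\imath_*(H_{\pm\pm}^o)=H_{\pm\pm}^o$, $\imath_*(E_A^o)=E_A^o$, holomorphic uniqueness of the extension of the remaining $e_{\pm a}$ forces the frame-field correspondence on the real form $Q$; combined with the fact that $e^{o\t}_I\in V^\t$, this identifies $e^{(\s)}_I$ with orthonormal frames of $h$, and concludes the proof.
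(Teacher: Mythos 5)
Your proposal is correct and follows essentially the same route as the paper: both pass from the holonomy/orthonormal frame bundle with its Levi--Civita connection to a $\tau$-compatible complex $\gg$-structure via Theorem \Aref{equivalences}, obtain the central hk-pair by the holomorphic-extension embedding $\imath$ of Lemma \Aref{appr_lem}, and conclude by identifying $\cU^{(\Sp_{p,q})}$ with the original orthonormal frame bundle so that the vielbeins of \eqref{metric} are orthonormal frames of the given metric. Your explicit choice of section $\s = \imath\circ\s_o$ just spells out the identification the paper states more tersely.
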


\begin{proof}
By  the proof of Theorem \Aref{equivalences}, every pseudo-hyperk\"ahler metric $g$ of 
signature $(4p, 4q)$ on an open subset  $M \subset \bR^{4n} $ determines a bundle 
$\pi\colon P \rightarrow M$,  the complexification of  (a double covering of) an 
$\Sp_1 {\cdot} \Sp_{p,q}$-bundle $(\wt P,  \wt \q)$ of orthonormal frames of $(M, g)$. 
The Levi-Civita connection and the soldering form of $\wt P$ determine a complex 
$\gsp_1(\bC) {\times} \gsp_n(\bC)$-structure  on $P$, which is reducible to an $\gsp_{p,q}$-structure. 
This $\gsp_1(\bC) {\times} \gsp_n(\bC)$-structure is uniquely associated (see Sect.\ 4.3) 
with an  hk-pair $(\cA, M')$, where $\cA$ is  a central hk-frame on an 
appropriate open subset $\cU \subset \cP$  and  $M' = \cU \cap M^{o}$.  

The claim is proved if we can show that  the pseudo-hyperk\"ahler metric $g$ on 
$M \subset \bR^{4n}$  coincides (modulo identifications) with the metric on $M' \simeq M$, 
associated with the hk-pair $(\cA, M')$, i.e.\ the metric defined in eq.\ \eqref{metric}.
For this, it suffices to  observe  that, by construction,   the real submanifold  
$\cU^{(\Sp_{p,q})} = M' {\cdot} \Sp_{p,q} $ of $\cU$  considered in  \eqref{USP}  coincides 
with  the bundle of orthonormal frames $\wt P$ over $M (\simeq M')$,  
so that  the vector fields 
$e^\t_{I}|_{\cU^{(\Sp_{p,q})}} = \a^\cA_{(\bR)}(e^{o\t}_{I})|_{\cU^{(\Sp_{p,q})}}$  
are horizontal  with respect to the Levi-Civita connection 
and   satisfy the equation $\vartheta_u(e^\t_a) = e^{o\t}_a \in V^\t$ for any $u \in \wt P$. 
Hence, the projections   of the vectors $\vartheta_x(e^\t_a)|_u$  onto the points 
$$x = \pi(u) \in M' = 
\cU^\t/\Sp_1 \times \Sp_{p,q} \simeq \wt P/\Sp_1 {\cdot} \Sp_{p,q} = M$$
constitute  $g$-orthonormal frames and the metric \eqref{metric}  is  necessarily 
equal to the metric $g$. 
\end{proof}


\vskip 1.5truecm
\hbox{\parindent=0pt\parskip=0pt
\vbox{\baselineskip 9.5 pt \hsize=3.1truein
\obeylines
{\smallsmc
Chandrashekar Devchand
Max-Planck-Institut für Gravitationsphysik 
(Albert-Einstein-Institut)
Am Mühlenberg 1 
D-14476 Potsdam 
Germany
}\medskip
{\smallit E-mail}\/: {\smalltt devchand@math.uni-potsdam.de
}
}
\hskip  7truemm
\vbox{\baselineskip 9.5 pt \hsize=3.7truein
\obeylines
{\smallsmc
Andrea Spiro
Scuola di Scienze e Tecnologie
Universit\`a di Camerino
Via Madonna delle Carceri
I-62032 Camerino (Macerata)
Italy
}\medskip
{\smallit E-mail}\/: {\smalltt andrea.spiro@unicam.it}
}
}
\end{document}